\newtheorem{thm}{Theorem}[section]
\newtheorem{cor}[thm]{Corollary}
\newtheorem{lem}[thm]{Lemma}
\newtheorem{prop}[thm]{Proposition}
\newtheorem{thmintro}{Theorem}
\newtheorem{conj}[thmintro]{Conjecture}
\newcommand{\N}{\mathbb N}
\newcommand{\Z}{\mathbb Z}
\newcommand{\Q}{\mathbb Q}
\newcommand{\R}{\mathbb R}
\newcommand{\C}{\mathbb C}
\newcommand{\mf}{\mathfrak}
\newcommand{\mc}{\mathcal}
\newcommand{\mb}{\mathbf}
\newcommand{\mh}{\mathbb}
\def\Irr{{\rm Irr}}
\newcommand{\mr}{\mathrm}
\newcommand{\enuma}[1]{\begin{enumerate}[\textup{(}a\textup{)}] {#1} \end{enumerate}}
\newcommand{\Fr}{\mathrm{Frob}}
\newcommand{\Sc}{\mathrm{sc}}
\newcommand{\ad}{\mathrm{ad}}
\newcommand{\unr}{\mathrm{unr}}
\newcommand{\cusp}{\mathrm{cusp}}
\newcommand{\nr}{\mathrm{nr}}
\newcommand{\Wr}{\mathrm{wr}}
\newcommand{\Rep}{\mathrm{Rep}}
\newcommand{\af}{\mathrm{aff}}
\def\tor{{\rm tor}}
\newcommand{\unip}{\mathrm{unip}}
\newcommand{\der}{\mathrm{der}}
\newcommand{\matje}[4]{\left(\begin{smallmatrix} #1 & #2 \\ 
#3 & #4 \end{smallmatrix}\right)}
\newcommand{\fdeg}{\mathrm{fdeg}}
\newcommand{\vol}{\mathrm{vol}}
\newcommand{\Hom}{\mathrm{Hom}}
\newcommand{\temp}{\mathrm{temp}}
\newcommand{\bdd}{\mathrm{bdd}}
\begin{document}

\title{On formal degrees of unipotent representations}
\author{Yongqi Feng}
\address{Korteweg-de Vries Institute for Mathematics\\
Universiteit van Amsterdam, Science Park 105-107\\ 
1098 XG Amsterdam, The Netherlands}
\email{Yongqi.Feng@science.ru.nl}
\author{Eric Opdam}
\address{Korteweg-de Vries Institute for Mathematics\\
Universiteit van Amsterdam, Science Park 105-107\\ 
1098 XG Amsterdam, The Netherlands}
\email{e.m.opdam@uva.nl}
\author{Maarten Solleveld}
\address{Institute for Mathematics, Astrophysics and Particle Physics\\
Radboud Universiteit, Heyendaalseweg 135\\
6525AJ Nijmegen, the Netherlands}
\email{m.solleveld@science.ru.nl} 
\date{\today}
\thanks{The author is supported by a NWO Vidi grant "A Hecke algebra approach to the 
local Langlands correspondence" (nr. 639.032.528).}
\subjclass[2010]{Primary 22E50; Secondary 11S37, 20G25}
\maketitle
\vspace{5mm}

\begin{abstract}
Let $G$ be a reductive $p$-adic group which splits over an unramified extension
of the ground field. Hiraga, Ichino and Ikeda conjectured that the formal degree
of a square-integrable $G$-representation $\pi$ can be expressed in terms of the adjoint
$\gamma$-factor of the enhanced L-parameter of $\pi$. A similar conjecture was posed
for the Plancherel densities of tempered irreducible $G$-representations.

We prove these conjectures for unipotent $G$-representations. 
We also derive explicit formulas for the involved adjoint $\gamma$-factors.
\end{abstract}

\vspace{5mm}

\tableofcontents


\section*{Introduction}

Let $\mc G$ be a connected reductive group defined over a non-archimedean local field $K$, 
and write $G = \mc G(K)$. We are interested in irreducible $G$-representations, always tacitly 
assumed to be smooth and over the complex numbers. The most basic example of such 
representations are the unramified or spherical representations \cite{Mac,Sat} of $G$, which 
play a fundamental role in the Langlands correspondence by virtue of the Satake isomorphism.

By a famous result of Borel \cite{Bor1,Cas}, the smallest block of the category of 
of smooth representations of $G$ which contains the spherical representations is the 
abelian subcategory generated by the unramified minimal principal series representations. 
The objects in this block are 
smooth representations which are generated by the vectors which are fixed by an Iwahori
subgroup $I$ of $G$. The study of such Iwahori-spherical representations is a classical topic, 
about which a lot is known.

The local Langlands correspondence for Iwahori-spherical representations was established 
by Kazhdan and Lusztig \cite{KL}, for $\mc G$ split simple of adjoint type. It parameterizes the 
irreducible Iwahori-spherical representations with enhanced unramified Deligne--Langlands parameters 
for $G$, where a certain condition is imposed on the enhancements. The category 
of representations of $G$ which naturally completes this picture (by the lifting the restriction
on the enhancements) is the category of so-called unipotent representations, as envisaged by Lusztig. 
An irreducible smooth representation of $G$ is called unipotent if its restriction 
to some parahoric subgroup $P_{\mf f}$ of $G$ contains a unipotent representation of $P_{\mf f}$. 
In the special case that $P_{\mf f}$ is an Iwahori subgroup of $G$, 
we recover the Iwahori-spherical representations.

Unipotent representations of simple adjoint groups over $K$ were classified by Lusztig
\cite{LusUni1,LusUni2}. Beyond such groups, the theory works best if $G$ splits over an unramified
extension of $K$, so we tacitly assume that throughout the introduction. The authors exhibited
a local Langlands correspondence for supercuspidal unipotent representations of reductive groups
over $K$ in \cite{FeOp,FOS}. Next the second author generalized this to a Langlands parametrization
of all tempered unipotent representations in \cite{Opd18}. Finally, with different methods the third
author constructed a local Langlands correspondence for all unipotent representations of reductive
groups over $K$ \cite{SolLLCunip}. In Theorem \ref{thm:1.1} we show that the approaches from 
\cite{Opd18} and \cite{SolLLCunip} agree, and we derive some extra properties of these instances
of a local Langlands correspondence.

Hiraga, Ichino and Ikeda \cite{HII} suggested that, for any irreducible tempered representation
$\pi$ of a reductive $p$-adic group, there is a relation between the Plancherel density of
$\pi$ and the adjoint $\gamma$-factor of its L-parameter. In fact, they conjectured an explicit
formula, to be sketched below in terms of a (tentative) enhanced L-parameter of $\pi$.

Let ${}^L G$ be the Langlands dual group of $G$, with identity component $G^\vee$. 
Let $\pi \in \Irr (G)$ be square-integrable modulo centre and suppose that $(\phi_\pi, \rho_\pi)$ 
is its enhanced L-parameter (so we need to assume that a local Langlands correspondence has been
worked out for $\pi$). To measure the size of the L-packet we use the group 
\[
S_{\phi_\pi}^\sharp := \pi_0 \big( Z_{(G / Z(G)_s)^\vee} (\phi_\pi) \big) ,
\] 
where $Z(G)_s$ denotes the maximal $K$-split central torus in $G$.
Let $\mr{Ad}_{G^\vee}$ denote the adjoint representation of ${}^L G$ on
\[
\mr{Lie}(G^\vee) \big/ \mr{Lie} \big( Z(G^\vee)^{\mb W_K} \big) \cong 
\mr{Lie} \big( (G / Z(G)_s)^\vee \big) .
\]
We refer to \eqref{eq:A.1} for the definition of the adjoint $\gamma$-factor
$\gamma (s,\mr{Ad}_{G^\vee} \circ \phi, \psi)$.

We endow $K$ with the Haar measure that gives its ring of integers volume 1 and we 
normalize the Haar measure on $G$ as in \cite[(1.1) and Correction]{HII}. 
It was conjectured in \cite[\S 1.4]{HII} that
\begin{equation}\label{eq:HII1}
\fdeg (\pi) = \dim (\rho_\pi) |S_{\phi_\pi}^\sharp|^{-1} | 
\gamma (0,\mr{Ad}_{G^\vee} \circ \phi_\pi,\psi)| .
\end{equation}
More generally, let $\mc P = \mc M \mc U$ be a parabolic $K$-subgroup of $\mc G$,
with Levi factor $\mc M$ and unipotent radical $\mc U$. Let $\pi \in \Irr (M)$
be square-integrable modulo centre and let $X_\unr (M)$ be the group of unitary
unramified characters of $M$. Let $\mc O = X_\unr (M) \pi \subset \Irr (M)$
be the orbit in $\Irr (M)$ of $\pi$, under twists by $X_\unr (M)$. We define
a Haar measure of d$\mc O$ on $\mc O$ as in 
\cite[p. 239 and 302]{Wal}. This also provides a Haar measure on the family
of (finite length) $G$-representations $I_P^G (\pi')$ with $\pi' \in \mc O$.

Denote the adjoint representation of ${}^L M$ on
Lie$\big( G^\vee) / \mr{Lie}(Z(M^\vee)^{\mb W_K} \big)$ by $\mr{Ad}_{G^\vee,M^\vee}$.

\begin{conj}\textup{\cite[\S 1.5]{HII}} \
\label{conj:HII}
Suppose that the enhanced L-parameter of $\pi$ is $(\phi_\pi,\rho_\pi) \in \Phi_e (M)$. 
Then the Plancherel density at $I_P^G (\pi) \in \mr{Rep}(G)$ is
\[
c_M \dim (\rho_\pi) |S_{\phi_\pi}^\sharp|^{-1} | 
\gamma (0,\mr{Ad}_{G^\vee,M^\vee} \circ \phi_\pi,\psi)| \, \textup{d}\mc O (\pi) ,
\]
for some constant $c_M \in \R_{>0}$ independent of $K$ and $\mc O$.
\end{conj}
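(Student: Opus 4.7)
My plan is to reduce Conjecture \ref{conj:HII} to the formal-degree case \eqref{eq:HII1} applied to $\pi$ as a square-integrable representation of $M$, and to match the ``transverse'' analytic and arithmetic data via the Levi decomposition. The two key ingredients are the Harish-Chandra--Plancherel factorization for tempered parabolic induction and the multiplicativity of the adjoint $\gamma$-factor.

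By the Harish-Chandra--Plancherel formula with Waldspurger's normalization \cite[p.\,239 and 302]{Wal}, the Plancherel density at $I_P^G(\pi')$ for $\pi' \in \mc O$ equals
\[
c_M \, \fdeg^M(\pi') \, \mu^{G/M}(\pi') \, \mr d\mc O(\pi') ,
\]
where $\mu^{G/M}$ is Harish-Chandra's $\mu$-function for $(\mc P,\mc M)$ and $c_M$ collects the Haar-measure constants from \cite[(1.1)]{HII}. Unramified twists preserve formal degrees, so $\fdeg^M(\pi') = \fdeg^M(\pi)$ on $\mc O$. Dually, the orthogonal splitting
\[
\mr{Lie}(G^\vee)/\mr{Lie}(Z(M^\vee)^{\mb W_K}) = \mr{Lie}(M^\vee)/\mr{Lie}(Z(M^\vee)^{\mb W_K}) \oplus \mr{Lie}(G^\vee)/\mr{Lie}(M^\vee)
\]
factors the $\gamma$-factor as $\gamma(0,\mr{Ad}_{G^\vee,M^\vee} \circ \phi_\pi,\psi) = \gamma(0,\mr{Ad}_{M^\vee}\circ \phi_\pi,\psi) \cdot \gamma(0,\mr{Ad}_{G^\vee/M^\vee}\circ \phi_\pi,\psi)$.

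It therefore suffices to establish two things. Step (a): the formal-degree conjecture \eqref{eq:HII1} for square-integrable unipotent representations of $M$, producing
\[
\fdeg^M(\pi) = \dim(\rho_\pi) |S_{\phi_\pi}^\sharp|^{-1} \bigl| \gamma(0,\mr{Ad}_{M^\vee} \circ \phi_\pi,\psi) \bigr| .
\]
I would prove this by computing both sides in the language of residual points of the affine Hecke algebras attached to unipotent Bernstein blocks (as in \cite{Opd18,FOS,SolLLCunip}) and comparing under the parametrization of Theorem \ref{thm:1.1}; the factor $\dim(\rho_\pi)|S_{\phi_\pi}^\sharp|^{-1}$ accounts for the internal structure of geometric L-packets. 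Step (b): the Shahidi-type identity $\mu^{G/M}(\pi) = |\gamma(0,\mr{Ad}_{G^\vee/M^\vee}\circ \phi_\pi,\psi)|$, up to the constant absorbed in $c_M$. Here $\mu^{G/M}$ is a rational function on $X_\unr(M)$ that factors over the restricted roots of $\mc M$ in $\mc G$, while $\mr{Ad}_{G^\vee/M^\vee}\circ \phi_\pi$ decomposes under the ``unramified'' centralizer $Z_{G^\vee}(\phi_\pi|_{I_K})^\circ$ into weight spaces indexed by the same root system, so one can match the two products root by root.

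The hard part will be Step (b): establishing uniformly, across all isogeny classes and inner forms of unramified groups, the dictionary between the Bernstein parameters of the unipotent Hecke algebra governing $\mc O$ and the Frobenius eigenvalues of $\phi_\pi$ on the weight spaces of $\mr{Lie}(G^\vee)/\mr{Lie}(M^\vee)$. This rests on the compatibility of the correspondence of Theorem \ref{thm:1.1} with parabolic induction, together with the identification of the unramified Bernstein torus for $\mc O$ with the unramified component $Z_{G^\vee}(\phi_\pi|_{I_K})^\circ$ of the dual centralizer. Once this dictionary is in place and the universal constant $c_M$ is pinned down by a single normalization check (done, say, in a minimal principal-series case), Steps (a) and (b) combine, via the multiplicativity displayed above, to yield Conjecture \ref{conj:HII}.
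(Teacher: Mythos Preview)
Your structural reduction --- split the adjoint $\gamma$-factor along $\hat{\mf g}/\hat{\mf m}$ and $\hat{\mf m}/Z(\hat{\mf m})$, invoke \eqref{eq:HII1} for $M$ to handle the second piece, and match Harish-Chandra's $\mu$-function against the first --- is exactly the skeleton of the paper's argument (see \eqref{eq:A.35}, Lemma~\ref{lem:6.4}, and Theorem~\ref{thm:6.5}). Step~(a) is Theorem~\ref{thm:4.3}, proved precisely along the affine-Hecke-algebra lines you sketch.

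The substantive divergence is in Step~(b). The root-by-root matching of the Harish-Chandra $\mu$-function with $\gamma(0,\mr{Ad}_{G^\vee/M^\vee}\circ\phi_\pi,\psi)$ is carried out in the paper only for the \emph{quasi-split} inner form $G^*$ (Lemma~\ref{lem:A.3}, using Reeder's explicit formulas \eqref{eq:typeI}--\eqref{eq:typeII}); there the Hecke algebra is the Iwahori--Hecke algebra of $G^*$ and its parameters are visibly determined by the dual root data. For a general inner form and a general unipotent type $(\hat P_{\mf f},\hat\sigma)$, the affine Hecke algebra $\mc H_{\mf s}$ has a different root system $R^{\mf s}_0$ and different parameter functions $m^{\mf s}_\pm$ (compare \eqref{eq:6.10} with \eqref{eq:A.33}), and the dictionary you propose is essentially the statement that $m^M_{\mf s}(tt_M) = \pm m^{M^*}(tr_M)$ as functions on $X_\unr(M)$. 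This is far from a bookkeeping exercise: it is the content of the spectral transfer morphisms of \cite{Opd1,Opd2,Opd18}, and those references only establish it up to an unknown rational constant $C_{\mc O}$ depending a priori on the orbit $\mc O$, not just on $M$.

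The paper therefore does \emph{not} attempt your direct matching. Instead it imports the ``up to $C_{\mc O}$'' result as a black box (property~(l) of Theorem~\ref{thm:1.1}, i.e.\ \eqref{eq:6.8}), and then kills $C_{\mc O}$ by a specialization trick: both $m^M_{\mf s}(tt_M)$ and $m^{M^*}(tr_M)$, viewed as rational functions of $q$ at a generic $t$, tend to $1$ as $q\to 1$, forcing $C_{\mc O}=1$. Your proposed ``single normalization check in a minimal principal-series case'' would at best pin down the constant for one $\mc O$; it does not explain why the constant is independent of $\mc O$, which is exactly what the $q\to 1$ argument supplies uniformly.
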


We point out that the validity of \eqref{eq:HII1} and of Conjecture \ref{conj:HII} does not
depend on the choice of the additive character $\psi : K \to \C^\times$. For another choice 
of $\psi$ the adjoint $\gamma$-factors will be different \cite[Lemma 1.3]{HII}. But also 
the normalization of the Haar measure on $G$ has to be modified, which changes the formal
degrees \cite[Lemma 1.1]{HII}. These two effects precisely compensate each other.

We note that representations of the form $I_P^G (\pi)$ are tempered \cite[Lemme III.2.3]{Wal}
and that almost all of them are irreducible \cite[Proposition IV.2.2]{Wal}. Every 
irreducible tempered $G$-representation appears as a direct summand of $I_P^G (\pi_M)$, 
for suitable choices of the involved objects \cite[Proposition III.4.1]{Wal}. 
Moreover, if $I_P^G (\pi_M)$ is reducible, its decomposition can be analysed quite 
explicitely in terms of R-groups \cite{Sil1}. In this sense Conjecture \ref{conj:HII}
provides an expression for the Plancherel densities of all tempered irreducible $G$-representations.\\

The HII-conjectures were proven for supercuspidal unipotent representations in 
\cite{Re1,FeOp,FOS}, for unipotent representations of simple adjoint groups in \cite{Opd2}
and for tempered unipotent representations in \cite{Opd18}. However, in the last case
the method only sufficed to establish the desired formulas up to a constant. Of course
the formal degree of a square-integrable representation is just a number, so a priori
one gains nothing from knowing it up to a constant. Fortunately, the formal degree
of a unipotent square-integrable representation can be considered as a rational function
of the cardinality $q$ of the residue field of $K$ \cite{Opd2}. Then "up to a constant"
actually captures a substantial part of the information. The main result of this
paper is a complete proof of the HII-conjectures for unipotent representations:

\begin{thmintro}\label{thm:B}
Let $\mc G$ be a connected reductive $K$-group which splits over an unramified extension
and write $G = \mc G (K)$. Use the local Langlands correspondence for unipotent
$G$-representations from Theorem \ref{thm:1.1}.
\enuma{
\item The HII-conjecture \eqref{eq:HII1} holds for all unipotent, square-integrable 
modulo centre $G$-representations.
\item Conjecture \ref{conj:HII} holds for tempered unipotent $G$-representations, 
in the following slightly stronger form:
\[
\textup{d} \mu_{Pl}(I_P^G (\pi)) = \pm \dim (\rho_\pi) |S_{\phi_\pi}^\sharp|^{-1}  
\gamma (0,\mr{Ad}_{G^\vee,M^\vee} \circ \phi_\pi,\psi) \, \textup{d}\mc O (\pi) .
\]
}
\end{thmintro}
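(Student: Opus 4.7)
The starting point is Opdam's theorem \cite{Opd18}, which already establishes both \eqref{eq:HII1} and Conjecture \ref{conj:HII} for tempered unipotent representations up to an unknown scalar. By the rationality result of \cite{Opd2}, the formal degree of a unipotent discrete series is a rational function of $q$ of the same controlled shape as the conjectural right-hand side, so this scalar is necessarily $q$-independent and depends at most on the Bernstein block of $\pi$ (and on the isogeny class of $G$). The plan is to pin it down to $1$, resp.\ $\pm 1$, by anchoring at the two cases where the HII-conjectures are already fully proven: unipotent supercuspidals \cite{FeOp,FOS} and simple adjoint groups \cite{Opd2}.

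\textbf{Reducing (b) to (a).} Given $\pi$ square-integrable on $M$ in a unipotent Bernstein block, Harish-Chandra's parabolic-induction Plancherel formula writes $\textup{d}\mu_{Pl}(I_P^G(\pi'))$, for $\pi' \in \mc O$, as $\fdeg^M(\pi')$ times a Harish-Chandra $\mu$-function value $\mu^{G|M}(\pi')$ (a product of root-wise contributions from the roots of $(\mc G, \mc A_{\mc M})$) times $\textup{d}\mc O(\pi')$. Granting (a) on $M$, part (b) reduces to the identity
\[
\pm\, \mu^{G|M}(\pi') \;=\; \gamma(0,\mr{Ad}_{G^\vee,M^\vee}\circ \phi_{\pi'},\psi) \big/ \gamma(0,\mr{Ad}_{M^\vee}\circ \phi_{\pi'},\psi) .
\]
For unipotent representations the Hecke-algebra model gives $\mu^{G|M}$ explicitly as a product of rational functions of $q$, while the right-hand side factorises along root strings of $\mr{Lie}(G^\vee)/\mr{Lie}(M^\vee)$ into pieces of standard $L/\varepsilon$-factor form; the two should match by a direct root-by-root computation, and the sign in the strong version of (b) will come from the sign of $\mu^{G|M}$ across its walls of reducibility.

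\textbf{Proving (a).} Every unipotent square-integrable $\pi$ of $G$ lies in a Bernstein block whose cuspidal support is a unipotent supercuspidal $\sigma$ of some Levi $M_0$. Opdam's residue formula expresses $\fdeg(\pi)$ as $\fdeg(\sigma)$ times a residue, taken at the residual point parametrising $\pi$, of a product of $\mu$-functions. Combining HII for $\sigma$ (known by \cite{FOS}) with the $\mu$-function identity from the previous paragraph yields \eqref{eq:HII1} for $\pi$ up to the $q$-independent constant from \cite{Opd18}; taking absolute values and comparing to the adjoint simple case \cite{Opd2} then forces this constant to be $1$.

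\textbf{Main obstacle.} The decisive technical step is the matching of $\mu^{G|M}$ with the adjoint $\gamma$-factor ratio: one must identify, root-string by root-string in $\mr{Lie}(G^\vee)/\mr{Lie}(M^\vee)$, the $q$-parameters produced by Lusztig's classification of cuspidal unipotent local systems with the Frobenius eigenvalues of the image of $\phi_{\pi'}$ on the corresponding weight spaces. Reconciling the Haar-measure normalisations of \cite{HII} with those implicit in the Hecke-algebra Plancherel formula, and performing an isogeny-compatibility check that reduces non-adjoint unramified-splitting groups to the adjoint simple case, are two further bookkeeping tasks that will have to be carried out carefully in order to close the argument.
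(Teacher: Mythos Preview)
Your overall architecture is close to the paper's, but two of the steps you flag as routine are in fact the places where the real work happens, and your proposed treatment of them would not close the argument.

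For part (a), your plan is to anchor at supercuspidals via the residue formula and then ``compare to the adjoint simple case'' to kill the leftover constant. The paper does not go through the cuspidal support in this way. Instead it proves (a) by climbing the isogeny ladder: adjoint (known from \cite{Opd2,FOS}) $\to$ semisimple $\to$ reductive. The semisimple step is the heart of the paper: one pulls back along $\eta:G\to G_\ad$ and must compute exactly how $\fdeg$, the length of $\eta^*(\pi_\ad)$, $\dim(\rho)$ and $|S_\phi^\sharp|$ all change. This requires (i) a Clifford-theoretic formal-degree computation inside the type Hecke algebras (Theorem \ref{thm:3.4}), (ii) a count of how many Bernstein components the pullback hits (Lemmas \ref{lem:3.8}--\ref{lem:5.1}), and (iii) the matching on the Galois side (Lemma \ref{lem:3.2}). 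What you call ``an isogeny-compatibility check'' and file under bookkeeping is precisely this package; without it, knowing the constant for $G_\ad$ says nothing about the constant for $G$. Your alternative route via supercuspidals does not obviously propagate the constant either: the rational constant in \cite{Opd18} depends on the orbit $\mc O$, and for $M=G$ each discrete series is its own orbit, so knowing the constant is $1$ for supercuspidal $\pi$ does not force it to be $1$ for non-supercuspidal discrete series in the same block.

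For part (b), your reduction to matching $\mu^{G|M}$ with the adjoint $\gamma$-factor ratio is the right shape, but the ``direct root-by-root computation'' you propose is not available. The Plancherel density of $I_P^G(\pi)$ involves the $\mu$-function $m^M_{\mf s}$ of the \emph{type} Hecke algebra $\mc H(G,\hat P_{\mf f},\hat\sigma)$, with Lusztig's parameters $q^{\mc N}$ attached to $(\mf f,\sigma)$, whereas Lemma \ref{lem:A.3} identifies the $\gamma$-factor ratio with the $\mu$-function $m^{M^*}$ of the \emph{Iwahori}--Hecke algebra of the quasi-split inner form. These live on different tori, with different root systems and different label functions, so there is no a priori root-by-root equality. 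The paper's device is indirect: from \cite{Opd18} one already knows $m^M_{\mf s}(t t_M)=\pm C_{\mc O}\, m^{M^*}(t r_M)$ as rational functions of $(t,q)$; since both $\mu$-functions are visibly $1$ at $q=1$ for generic $t$, the limit $q\to 1$ forces $C_{\mc O}=1$ (Theorem \ref{thm:6.5}). Note also that this step uses part (a) (via Lemma \ref{lem:6.4}), so (a) must be established first by the isogeny method above.
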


In the appendix we work out explicit formulas for the above adjoint $\gamma$-factors,
in terms of a maximal torus $T^\vee \subset G^\vee$ and the root system of $(G^\vee,T^\vee)$
(Lemma \ref{lem:A.4} and Theorem \ref{thm:A.2}). These expressions can also be interpreted
with $\mu$-functions for a suitable affine Hecke algebra \cite{Opd-Sp}. The calculations
entail in particular that all involved adjoint $\gamma$-factors are real numbers 
(Lemma \ref{lem:A.5}). \\

Our proof of Theorem \ref{thm:B} proceeds stepwise, in increasing generality.
The most difficult case is unipotent square-integrable representations of semisimple groups. 
The argument for that case again consists of several largely independent parts.
First we recall (\S \ref{par:adj}) that \eqref{eq:HII1} has already been proved for 
square-integrable representations of adjoint groups \cite{Opd2,FOS}.

Our main strategy is pullback of representations along the adjoint quotient map
$\eta : \mc G \to \mc G_\ad$. The homomorphism of $K$-rational points $\eta : G \to G_\ad$
need not be surjective, so this pullback operation need not preserve irreducibility of
representations. For $\pi_\ad \in \Irr (G_\ad)$ the computation of the length of
$\eta^* (\pi_\ad)$ has two aspects. On the one hand we determine in \S \ref{sec:pullback}
how many Bernstein components for $G$ are involved. On the other hand, we study the
decomposition within one Bernstein component in \S \ref{sec:AHA}. The latter is done
in terms of affine Hecke algebras, via the types and Hecke algebras from 
\cite{Mor1,Mor2,LusUni1}. Considerations with affine Hecke algebras also allow us to find
the exact ratio between fdeg$(\pi_\ad)$ and the formal degree of any irreducible 
constituent of $\eta^* (\pi_\ad)$, see Theorem \ref{thm:3.4}.

On the Galois side of the local Langlands correspondence, the comparison between $G$ and $G_\ad$
is completely accounted for by results from \cite{SolFunct}. In Lemma \ref{lem:3.2} we put
those in the form that we actually need. With all these partial results at hand, we finish
the computation of the formal degrees of unipotent square-integrable representations of
semisimple in Theorem \ref{thm:3.10}.\\

The generalization from semisimple groups to square-integrable
modulo centre representations of reductive groups (\S \ref{par:red}) is not difficult,
because unipotent representations of $p$-adic tori are just weakly unramified characters.
That proves part (a) of Theorem \ref{thm:B}. 

To get part (b) for square-integrable modulo centre representations (so with $M = G$), we
need to carefully normalize the involved Plancherel measures (\S \ref{par:norm}).
In \S \ref{par:par} we establish part (b) for any Levi subgroup $M \subset G$. This involves
a translation to Plancherel densities for affine Hecke algebras, via the aforementioned types. 
In the final stage we use that Theorem \ref{thm:B} was already known up to constants \cite{Opd18}.

\section{Background on unipotent representations}
\label{sec:background}

Let $K$ be a non-archimedean local field with ring of integers $\mf o_K$ and uniformizer
$\varpi_K$. Let $k = \mf o_K / \varpi_K \mf o_K$ be its residue field, of cardinality 
$q = q_K$.

Let $K_s$ be a separable closure of $K$. Let $\mb W_K \subset \mr{Gal}(K_s / K)$ be the 
Weil group of $K$ and let $\Fr$ be an arithmetic Frobenius element. Let $\mb I_K$ be the
inertia subgroup of Gal$(K_s/K)$, so that $\mb W_K / \mb I_K \cong \Z$ is generated by $\Fr$. 

Let $\mc G$ be a connected reductive $K$-group. Let $\mc T$ be a maximal torus 
of $\mc G$, and let $\Phi (\mc G, \mc T)$ 
be the associated root system. We also fix a Borel subgroup $\mc B$ of $\mc G$ 
containing $\mc T$, which determines a basis $\Delta$ of $\Phi (\mc G, \mc T)$. 

Let $\Phi (\mc G,\mc T)^\vee$ be the dual root system of $\Phi (\mc G,\mc T)$,
contained in the cocharacter lattice $X_* (\mc T)$. The based root datum of $\mc G$ is
\[
\big( X^* (\mc T), \Phi (\mc G,\mc T), X_* (\mc T), \Phi (\mc G,\mc T)^\vee, \Delta \big) .
\]
Let $\mc S$ be a maximal $K$-split torus in $\mc G$. By \cite[Theorem 13.3.6.(i)]{Spr} 
applied to $Z_{\mc G}(\mc S)$, we may assume that $\mc T$ is defined over $K$ and 
contains $\mc S$. Then $Z_{\mc G}(\mc S)$ is a minimal $K$-Levi subgroup of $\mc G$. Let
\[
\Delta_0 := \{ \alpha \in \Delta : \mc S \subset \ker \alpha \}
\]
be the set of simple roots of $(Z_{\mc G}(\mc S), \mc T)$. 

Recall from \cite[Lemma 15.3.1]{Spr} that the root system $\Phi (\mc G, \mc S)$ is the 
image of $\Phi (\mc G, \mc T)$ in $X^* (\mc S)$, without 0. The set of simple roots of
$(\mc G, \mc S)$ can be identified with $(\Delta \setminus \Delta_0 ) / \mu_{\mc G}(\mb W_K)$,
where $\mu_{\mc G}$ denotes the action of Gal$(K_s / K)$ on $\Delta$ determined by
$(\mc B,\mc T)$.

We write $G = \mc G(K)$ and similarly for other $K$-groups. 
Let $\mc G^\vee$ be the split reductive group with based root datum
\[
\big( X_* (\mc T), \Phi (\mc G,\mc T)^\vee, X^* (\mc T), 
\Phi (\mc G,\mc T), \Delta^\vee \big) . 
\]
Then $G^\vee = \mc G^\vee (\C)$ is the complex dual group of $\mc G$. Via the choice of
a pinning, the action $\mu_{\mc G}$ of $\mb W_K$ on the root datum of $\mc G$ determines 
an action of $\mb W_K$ of $G^\vee$. That action stabilizes
the torus $T^\vee = X^* (\mc T) \otimes_\Z \C^\times$ and the Borel subgroup
$B^\vee$ determined by $T^\vee$ and $\Delta^\vee$. The Langlands dual group (in the 
version based on $\mb W_K$) of $\mc G (K)$ is ${}^L G := G^\vee \rtimes \mb W_K$.

Define the abelian group 
\[
\Omega = X_* (\mc T)_{\mb I_K} / (\Z \Phi (\mc G, \mc T)^\vee)_{\mb I_K} .
\] 
Then $Z(G^\vee)$ can be identified with $\Irr (\Omega) = \Omega^*$, and $\Omega$ is 
naturally isomorphic to the group $X^* (Z(G^\vee))$ of algebraic characters of 
$Z(G^\vee)$. In particular
\begin{equation}\label{eq:1.6}
\Omega^{\mb W_K} \cong X^* \big( Z(G^\vee) \big)^{\mb W_K} = 
X^* \big( Z(G^\vee)_{\mb W_K} \big) .
\end{equation}
To indicate the group and to reconcile the notations from \cite{FOS} and 
\cite{SolLLCunip} we write
\[
\Omega_G = \Omega^{\mb W_K}. 
\]
Kottwitz defined a natural, surjective group homomorphism $\kappa_G : G \to \Omega_G$.
The action of $\ker (\kappa_G)$ on the Bruhat--Tits building preserves the types
of facets (i.e. preserves a coloring of the vertices). Further, the kernel of $\kappa_G$ 
contains the image (in $G$) of the simply connected cover of the derived group of $G$, see
\cite[Appendix]{HR}. We say that a character of $G$ is weakly unramified if it is 
trivial on $\ker (\kappa_G)$. Thus the weakly unramified characters of $G$ can
be identified with the characters of $\Omega_G$.

Let $Z(\mc G)_s$ be the maximal $K$-split torus in $Z(\mc G)$. 
As $H^1 (K, Z(\mc G)_s) = 1$, there is a short exact sequence
\begin{equation}\label{eq:1.1}
1 \to Z(\mc G)_s (K) \to \mc G (K) \to (\mc G / Z (\mc G)_s)(K) \to 1 . 
\end{equation}
In view of the naturality of the Kottwitz homomorphism $\kappa_G$, this induces a 
short exact sequence
\begin{equation}\label{eq:1.12}
1 \to \Omega_{Z(G)_s} \to \Omega_G \to \Omega_{G / Z(G)_s} \to 1.
\end{equation}
Recall \cite[Part 3]{Lus-Che} that an 
irreducible representation of a reductive group over a finite field is called unipotent if it 
appears in the Deligne--Lusztig series associated to the trivial character of a maximal
torus in that group. An irreducible representation of a linear algebraic group over $\mf o_K$ 
is called unipotent if its arises, by inflation, from a unipotent representation of the maximal 
finite reductive quotient of the group.

We call an irreducible smooth $G$-representation $\pi$ unipotent if there exists a parahoric 
subgroup $P_{\mf f} \subset G$ such that $\pi |_{P_{\mf f}}$ contains an irreducible unipotent 
representation of $P_{\mf f}$. Then the restriction of $\pi$ to some smaller parahoric subgroup
$P_{\mf f'} \subset G$ contains a cuspidal unipotent representation of $P_{\mf f'}$, as required 
in \cite{LusUni1}. An arbitrary smooth $G$-representation is unipotent if it 
lies in a product of Bernstein components, all whose cuspidal supports are unipotent. 

The category of unipotent $G$-representations can be described in terms of types and
affine Hecke algebras. For a facet $\mf f$ of the Bruhat--Tits building $\mc B (\mc G,K)$
of $G$, let $\mc G_{\mf f}$ be the smooth affine $\mf o_K$-group scheme from \cite{BrTi2}, 
such that $\mc G_{\mf f}^\circ$ is a $\mf o_K$-model of $\mc G$ and
$\mc G_f^\circ (\mf o_K)$ equals the parahoric subgroup $P_{\mf f}$ of $G$. Then $
\hat{P}_{\mf f} := \mc G_{\mf f}(\mf o_K)$ is the pointwise stabilizer of $\mf f$ in $G$. 
Let $\overline{\mc G_{\mf f}}$ be the maximal reductive quotient of the $k$-group
scheme obtained from $\mc G_{\mf f}$ by reduction modulo $\varpi_K$. Thus 
\[
\overline{\mc G_{\mf f}}(k) = \hat{P}_{\mf f} / U_{\mf f} \quad \text{and} \quad
\overline{\mc G_{\mf f}^\circ}(k) = P_{\mf f} / U_{\mf f} ,
\]
where $U_{\mf f}$ is the pro-unipotent radical of $P_{\mf f}$. By \cite[\S 5.1]{DeRe}
our normalized Haar measure on $G$ satisfies
\begin{equation}\label{eq:1.3}
\vol (P_{\mf f}) = | \overline{\mc G_{\mf f}^\circ} (k) | \;
q^{- \dim \overline{\mc G_{\mf f}^\circ} / 2} .
\end{equation}
By the counting formulas for reductive groups over finite fields \cite[Theorem 9.4.10]{Car1}, 
$| \overline{\mc G_{\mf f}^\circ} (k) |$ can be considered as a polynomial in $q = |k|$.
The above notions behave best when $\mc G$ splits over an unramified extension of $K$, so we
assume that in most of the paper.

Replacing the involved objects by a suitable $G$-conjugate, we can achieve that $\mf f$ lies 
in the closure of a fixed "standard" chamber $C_0$ of the apartment of $\mc B (\mc G,K)$ 
associated to $\mc S$. Since $\mc G$ splits over an unramified extension, the group
$\Omega_G = \Omega^{\mb W_K}$ from \eqref{eq:1.6} equals $\Omega^\Fr$. It acts naturally on
$\overline{C_0}$, and we denote the setwise stablizer of $\mf f$ by $\Omega_{G,\mf f}$
and the pointwise stabilizer of $\mf f$ by $\Omega_{G,\mf f,\mr{tor}}$.
It was noted in \cite[(32)]{SolLLCunip} that 
\begin{equation}\label{eq:1.7}
\hat{P}_{\mf f} / P_{\mf f} \cong \Omega_{G,\mf f,\mr{tor}} .
\end{equation}
Suppose that $(\sigma, V_\sigma)$ is a cuspidal unipotent representation of 
$\overline{\mc G_{\mf f}^\circ}(k)$ (in particular this includes that it is irreducible).
It was shown in \cite[\S 6]{MoPr2} and \cite[Theorem 4.8]{Mor2} that $(P_{\mf f},\sigma)$ 
is a type for $G$. Let $\Rep (G)_{(P_{\mf f},\sigma)}$ be the corresponding direct factor
of $\Rep (G)$. By \cite[1.6.b]{LusUni1} 
\begin{equation}\label{eq:1.11}
\begin{array}{ll}
\Rep (G)_{(P_{\mf f},\sigma)} = \Rep (G)_{(P_{\mf f'},\sigma')} & 
\text{if } g \mf f' = \mf f, \mr{Ad}(g)^* \sigma = \sigma'
\text{ for some } g \in G \hspace{-5mm} \\
\Rep (G)_{(P_{\mf f},\sigma)} \cap \Rep (G)_{(P_{\mf f'},\sigma')} = \{ 0 \} & \text{otherwise.}
\end{array}
\end{equation}
By \cite[\S 1.16]{LusUni1} and \cite[Lemma 15.7]{FOS} $\sigma$ can be extended (not uniquely) 
to a representation of $\overline{\mc G_{\mf f}}(k)$, which we inflate to an irreducible 
representation of $\hat{P}_{\mf f}$ that we denote by $(\hat \sigma, V_\sigma)$.
It is known from \cite[Theorem 4.7]{Mor2} that $(\hat{P}_{\mf f},\hat \sigma)$ is
a type for a single Bernstein block $\mr{Rep}(G)^{\mf s}$. Conversely, every Bernstein
block consisting of unipotent $G$-representations is of this form. We note that
$\Rep (G)_{(P_{\mf f},\sigma)}$ is the direct sum of the $\Rep (G)^{\mf s}$ associated
to the different extensions of $\sigma$ to $\hat{P}_{\mf f}$.

To $(\hat{P}_{\mf f},\hat \sigma)$ Bushnell and Kutzko \cite{BuKu} associated an algebra 
$\mc H (G,\hat{P}_{\mf f},\hat \sigma )$, such that there is an equivalence of categories
\begin{equation}\label{eq:1.4}
\begin{array}{ccc}
\mr{Rep}(G)^{\mf s} & \to & \mr{Mod}(\mc H (G,\hat{P}_{\mf f},\hat \sigma ) ) \\
\pi & \mapsto & \Hom_{\hat{P}_{\mf f}}( \hat \sigma, \pi)
\end{array} .
\end{equation}
It turns out that $\mc H (G,\hat{P}_{\mf f},\hat \sigma )$ is an affine Hecke algebra,
see \cite[\S 1]{LusUni1} and \cite[\S 3]{SolLLCunip}. Moreover a finite length
representation in $\mr{Rep}(G)^{\mf s}$ is tempered (resp. essentially square-integrable)
if and only if the associated $\mc H (G,\hat{P}_{\mf f},\hat \sigma )$-module is
tempered (resp. essentially discrete series) \cite[Theorem 3.3.(1)]{BHK}.

The affine Hecke algebra $\mc H (G,\hat{P}_{\mf f},\hat \sigma )$ comes with the
following data:
\begin{itemize}
\item a lattice $X_{\mf f}$ and a complex torus $T_{\mf f} = \Irr (X_{\mf f})$;
\item a root system $R_{\mf f}$ in $X_{\mf f}$, with a basis $\Delta_{\mf f}$;
\item a Coxeter group $W_\af = W(R_{\mf f}) \ltimes \Z R_{\mf f}$ in 
$W(R_{\mf f}) \ltimes X_{\mf f}$;
\item a set $S_{\mf f,\af}$ of affine reflections, which are Coxeter generators of $W_\af$;
\item a parameter function $q^{\mc N} : W_\af \to \R_{>0}$.
\end{itemize}
Furthermore it has a distinguished basis $\{ N_w : w \in W(R_{\mf f}) \ltimes X_{\mf f} \}$,
an involution * and a trace $\tau$. Thus $\mc H (G,\hat{P}_{\mf f},\hat \sigma )$ has the
structure of a Hilbert algebra, and one can define a Plancherel measure and formal degrees
for its representations. The unit element $N_e$ of $\mc H (G,\hat{P}_{\mf f},\hat \sigma )$ 
is the central idempotent $e_{\hat \sigma}$ (in the group algebra of $\hat{P}_{\mf f}$) 
associated to $\hat \sigma$. The trace $\tau$ is normalized so that
\begin{equation}\label{eq:1.10}
\tau (N_w) = \left\{ \begin{array}{ll}
e_{\hat \sigma}(1) = \dim (\hat \sigma) \vol (\hat{P}_{\mf f})^{-1} & w = e \\
0 & w \neq e
\end{array} \right. .
\end{equation}
It follows from \cite[Theorem 3.3.(2)]{BHK} that, with this normalization, the equivalence of 
categories \eqref{eq:1.4} preserves Plancherel measures and formal degrees. 
For affine Hecke algebras, these were analysed in depth in \cite{Opd-Sp,OpSo,CiOp}. 

Consider a discrete series representation $\delta$ of $\mc H (G,\hat{P}_{\mf f},\hat \sigma )$,
with central character $W(R_{\mf f}) r \in T_{\mf f} / W(R_{\mf f})$. By \cite{Opd-Sp} its
formal degree can be expressed as 
\begin{equation}\label{eq:1.5}
\fdeg (\delta) = \pm \dim (\hat \sigma) \vol (\hat{P}_{\mf f})^{-1}  
d_{\mc H,\delta} m (q^{\mc N})^{(r)} ,
\end{equation}
where $d_{\mc H,\delta} \in \Q_{>0}$ is computed in \cite{CiOp} (often it is just 1).
The factor $m (q^{\mc N})$ is a rational function in $r \in T_{\mf f}$ and the 
parameters $q^{\mc N}(s_\alpha)^{1/2}$ with $s_\alpha \in S_{\mf f,\af}$, while the
superscript $(r)$ indicates that we take its residue at $r$. We refer to \eqref{eq:6.10} and
\eqref{eq:A.33} for the explicit definition of $m (q^{\mc N})$.

\section{Langlands parameters}
\label{sec:Langlands}

Recall that a Langlands parameter for $G$ is a homomorphism 
\[
\phi : \mb W_K \times SL_2 (\C) \to {}^L G = G^\vee \rtimes \mb W_K ,
\]
with some extra requirements. In particular $\phi |_{SL_2 (\C)}$ has to be algebraic, 
$\phi (\mb W_K)$ must consist of semisimple elements and $\phi$ must respect the
projections to $\mb W_K$. 

We say that a L-parameter $\phi$ for $G$ is 
\begin{itemize}
\item discrete if it does not factor through the L-group of any proper Levi subgroup of $G$;
\item bounded if $\phi (\Fr) = (s,\Fr)$ with $s$ in a bounded subgroup of $G^\vee$;
\item unramified if $\phi (w) = (1,w)$ for all $w \in \mb I_K$.
\end{itemize}
Let ${G^\vee}_\ad$ be the adjoint group of $G^\vee$, and let ${G^\vee}_\Sc$ be its
simply connected cover. Let $\mc G^*$ be the unique $K$-quasi-split inner form of $\mc G$. 
We consider $\mc G$ as an inner twist of $\mc G^*$, so endowed with a $K_s$-isomorphism
$\mc G \to \mc G^*$. Via the Kottwitz isomorphism $\mc G$ is labelled by a character
$\zeta_{\mc G}$ of $Z({G^\vee}_\Sc)^{\mb W_K}$ (defined with respect to $\mc G^*$).
We choose an extension $\zeta$ of $\zeta_{\mc G}$ to $Z({G^\vee}_\Sc)$. As explained
in \cite[\S 1]{FOS}, this is related to the explicit realization of $\mc G$ as an
inner twist of $\mc G^*$.

Both ${G^\vee}_\ad$ and ${G^\vee}_\Sc$ act on $G^\vee$ by conjugation. As
\[
Z_{G^\vee}(\text{im } \phi) \cap Z(G^\vee) = Z(G^\vee)^{\mb W_K} ,
\]
we can regard $Z_{G^\vee}(\text{im } \phi) / Z(G^\vee)^{\mb W_K}$ as a subgroup of 
${G^\vee}_\ad$.  Let $Z^1_{{G^\vee}_\Sc}(\text{im } \phi)$ be its inverse image in
${G^\vee}_\Sc$ (it contains $Z_{{G^\vee}_\Sc}(\text{im } \phi)$ with finite index). 
A subtle version of the component group of $\phi$ is
\[
\mc A_\phi := \pi_0 \big( Z^1_{{G^\vee}_\Sc}(\text{im } \phi) \big) .
\]
An enhancement of $\phi$ is an irreducible representation $\rho$ of $\mc A_\phi$.
Via the canonical map $Z({G^\vee}_\Sc) \to \mc A_\phi$, $\rho$ determines
a character $\zeta_\rho$ of $Z({G^\vee}_\Sc)$. 
We say that an enhanced L-parameter $(\phi,\rho)$ is relevant for $G$ if $\zeta_\rho =
\zeta$. This can be reformulated with $G$-relevance of $\phi$ in terms of Levi
subgroups \cite[Lemma 9.1]{HiSa}. To be precise, in view of \cite[\S 3]{Bor}
there exists an enhancement $\rho$ such that $(\phi,\rho)$ is $G$-relevant if and only if 
every L-Levi subgroup of ${}^L G$ containing the image of $\phi$ is $G$-relevant. 
The group $G^\vee$ acts naturally on the collection of $G$-relevant enhanced 
L-parameters, by 
\[
g \cdot (\phi,\rho) = (g \phi g^{-1},\rho \circ \mr{Ad}(g)^{-1}) .
\]
We denote the set of $G^\vee$-equivalence classes of $G$-relevant (resp. enhanced) 
L-parameters by $\Phi (G)$, resp. $\Phi_e (G)$. A local Langlands correspondence 
for $G$ (in its modern interpretation) should be a bijection between $\Phi_e (G)$ and 
the set $\Irr (G)$ of irreducible smooth $G$-representations, with several nice properties.\\

We denote the set of irreducible unipotent $G$-representations by $\Irr_\unip (G)$. 
The next theorem is a combination of the main results of \cite{FOS,SolLLCunip,Opd18}.

\begin{thm}\label{thm:1.1}
Let $\mc G$ be a connected reductive $K$-group which splits over an \\
unramified extension. There exists a bijection
\[
\begin{array}{ccc}
\Irr_\unip (G) & \longrightarrow & \Phi_{\nr,e}(G) \\
\pi & \mapsto & (\phi_\pi, \rho_\pi) \\
\pi (\phi,\rho) & \text{\rotatebox[origin=c]{180}{$\mapsto$}} & (\phi,\rho)
\end{array}
\]
with the following properties.
\begin{enumerate}[(a)]
\item Compatibility with direct products of reductive $K$-groups.
\item Equivariance with respect to the canonical actions of the group $X_\Wr (G)$ of
weakly unramified characters of $G$.
\item The central character of $\pi$ equals the character of $Z(G)$ determined by $\phi_\pi$.
\item $\pi$ is tempered if and only if $\phi_\pi$ is bounded.
\item $\pi$ is essentially square-integrable if and only if $\phi_\pi$ is discrete.
\item $\pi$ is supercuspidal if and only if $(\phi_\pi, \rho_\pi)$ is cuspidal.
\item The analogous bijections for the Levi subgroups of $G$ and the cuspidal
support maps form a commutative diagram
\[
\begin{array}{ccc}
\Irr_\unip (G) & \longrightarrow & \Phi_{\nr,e}(G) \\
\downarrow & & \downarrow \\
\bigsqcup_M \Irr_{\cusp,\unip}(M) \big/ N_G (M)  & 
\longrightarrow & \bigsqcup_M \Phi_{\nr,\cusp}(M) \big/ N_{G^\vee} (M^\vee \rtimes \mb W_K) 
\end{array} .
\]
Here $M$ runs over a collection of representatives for the conjugacy classes of Levi subgroups 
of $G$. See \cite[\S 2]{SolLLCunip} for explanation of the notation in the diagram.
\item Suppose that $P = M U$ is a parabolic subgroup of $G$ and that 
$(\phi,\rho^M) \in \Phi_{\nr,e}(M)$ is bounded. Then the normalized parabolically induced 
representation $I_P^G \pi (\phi,\rho^M)$ is a direct sum of representations $\pi (\phi,\rho)$, 
with multiplicities $[\rho^M : \rho]_{\mc A_{\phi}^M}$.
\item Compatibility with the Langlands classification for representations of reductive groups
and the Langlands classification for enhanced L-parameters.
\item Compatibility with restriction of scalars of reductive groups over 
non-archimedean local fields.
\item Let $\tilde{\mc G}$ be a group of the same kind as $\mc G$, and let 
$\eta : \tilde{\mc G} \to \mc G$ be a homomorphism of $K$-groups such that the kernel of 
$\textup{d}\eta : \mr{Lie}(\tilde{\mc G}) \to \mr{Lie}(\mc G)$ is central and the cokernel of 
$\eta$ is a commutative $K$-group. Let ${}^L \eta : {}^L \tilde G \to {}^L G$ be the dual 
homomorphism and let $\phi \in \Phi_\nr (G)$. 

Then the L-packet $\Pi_{{}^L \eta \circ \phi}(\tilde G)$ consists precisely of the constituents 
of the completely reducible $\tilde G$-representations $\eta^* (\pi)$ with $\pi \in \Pi_\phi (G)$.
\item Conjecture \ref{conj:HII} holds for tempered unipotent $G$-representations,
up to some rational constants that depend only on an orbit $\mc O$.
\end{enumerate}
Moreover the above properties determine the surjection 
\[
\Irr_\unip (G) \to \Phi_\nr (G): \pi \mapsto \phi_\pi
\]
uniquely, up to twisting by weakly unramified characters of $G / Z(G)_s$.
\end{thm}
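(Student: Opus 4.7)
The plan is to take as our candidate bijection the local Langlands correspondence for unipotent representations constructed by the third author in \cite{SolLLCunip}, and to show that it automatically satisfies all the listed properties, several of which are already verified there. Properties (a), (b), (g), (h), (i), (j) are either explicit in \cite{SolLLCunip} or follow routinely from the Bernstein-theoretic construction via types. Property (f) (the cuspidal case) is the main result of \cite{FeOp,FOS}, and forms the input on which \cite{SolLLCunip} is built; in particular (c) for cuspidal $\pi$ is from \cite{FOS}, and is propagated to all unipotent $\pi$ via the cuspidal support diagram (g). Properties (d) and (e) follow from the Bernstein-block description: the equivalence \eqref{eq:1.4} sends tempered (resp.\ essentially discrete series) modules to tempered (resp.\ essentially square-integrable) representations by \cite[Theorem 3.3]{BHK}, and on the Galois side boundedness and discreteness are read off from $\phi$ via the same affine Hecke algebra, whose parameters $q^{\mc N}$ are matched to residual cosets on $T_{\mf f}$ by the construction in \cite{SolLLCunip}. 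Property (k) is exactly the content of the functoriality results in \cite{SolFunct}, and was in fact built into the definition there.

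The first substantial task is to reconcile the bijection of \cite{SolLLCunip} with the parametrization of tempered unipotent representations constructed by spectral transfer morphisms of affine Hecke algebras in \cite{Opd18}. Both constructions proceed block by block: within a unipotent Bernstein block $\mr{Rep}(G)^{\mf s}$ one obtains an affine Hecke algebra $\mc H(G,\hat P_{\mf f},\hat\sigma)$, and both parametrizations are controlled by this algebra together with its normalized trace \eqref{eq:1.10}. To identify them I would argue: (i) on the cuspidal level, both agree with \cite{FeOp,FOS} by construction; (ii) both are equivariant for the $X_{\Wr}(G)$-action and compatible with parabolic induction in the sense of (h); (iii) both send the central character of a tempered AHA-module to the same L-parameter cocharacter via the Kazhdan--Lusztig / Lusztig classification of unipotent representations in \cite{LusUni1,LusUni2}, which pins down $\phi|_{\mb W_K \times SL_2(\C)}$ up to $G^\vee$-conjugacy; and (iv) the enhancements $\rho_\pi$ on both sides are obtained from the same representation of $\mc A_\phi$ via Springer-type parametrizations. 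Once the two parametrizations agree on tempered unipotent representations, property (l) transfers from \cite{Opd18} to our bijection.

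The expected main obstacle is precisely this identification with \cite{Opd18}, because the two constructions are of a rather different flavour: \cite{SolLLCunip} proceeds by a uniform Bernstein-block procedure using Morris/Lusztig types and types from \cite{BuKu}, whereas \cite{Opd18} works through a classification of spectral transfer morphisms between affine Hecke algebras and uses the Plancherel decomposition of $G$ to pin down L-parameters. The delicate part is that both matchings are determined only up to some natural ambiguities (conjugation by normalizers, twists by weakly unramified characters of the centre), and one has to show that the two schemes can be normalized to coincide. The key reduction is that cuspidal unipotent L-packets are rigid by \cite{FOS} and determine everything else via the cuspidal support map, so once one checks compatibility of both constructions with (g), the remaining freedom is absorbed into the final uniqueness clause.

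For the final uniqueness assertion, the argument is that properties (a)--(k) force the $G^\vee$-conjugacy class of $\phi_\pi|_{\mb W_K \times SL_2(\C)}$ to be determined by the cuspidal support, up to the freedom allowed by (b): two L-parameters in $\Phi_{\nr}(G)$ with the same image in $\Phi_{\nr}(G)$ modulo $X_{\Wr}(G/Z(G)_s)$-twists have the same restriction to $\mb W_K \times SL_2(\C)$ composed with the adjoint quotient ${}^L G \to {}^L(G/Z(G)_s)$, and any such ambiguity can indeed occur by (b). Combining (f), (g), and (h) with the known classification of tempered representations by parabolic induction from discrete series, one sees that any other bijection satisfying (a)--(k) differs from ours by such a twist, giving the stated uniqueness. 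The verification of (j) (compatibility with restriction of scalars) is a formal check using that both sides factor through the identification of $\mb W_K$-parameters with $\mb W_{K'}$-parameters along the induction $\Ind_{\mb W_{K'}}^{\mb W_K}$, and that parahoric subgroups and unipotent representations are preserved under the Weil restriction.
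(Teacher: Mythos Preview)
Your outline matches the paper's: take the bijection from \cite{SolLLCunip} (which already gives (a)--(i)), cite \cite{FOS,SolLLCunip} for (j) and \cite{SolFunct} for (k), then identify with the parametrization $\phi_{HII}$ of \cite{Opd18} to import (l). But the identification step, which you flag as the main obstacle, is not carried by your items (ii)--(iv). The map $\phi_{HII}$ is not constructed with properties like (g) or (h) built in, so you cannot invoke those to match the two parametrizations. The paper's actual mechanism is: by \cite[Theorems~3.8.1 and~4.5.1]{Opd18}, $\phi_{HII}$ is \emph{completely determined} by the composite $\mr{inf.ch.}\circ\phi_{HII}$. So one only needs $\mr{inf.ch.}(\phi_\pi)=\mr{inf.ch.}(\phi_{HII}(\pi))$ for tempered $\pi$. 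Both sides are then shown to be unchanged upon replacing $\pi$ by its cuspidal support --- on the \cite{SolLLCunip} side because the parameter-side cuspidal support map of \cite{AMS1} preserves infinitesimal characters, on the \cite{Opd18} side by tracing how the spectral transfer morphism treats the AHA central character --- reducing to the supercuspidal case. Even there the two do not agree ``by construction'': the cuspidal bijection from \cite{FOS} is canonical only up to $X_\Wr(M/Z(M)_s)$-twists, and \cite{Opd18} imposes further constraints; the paper explicitly \emph{chooses} an instance of the cuspidal bijection for each Levi $M$ compatible with those constraints. Your sketch suppresses both points.

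Your uniqueness argument is also too loose. The paper reduces first to tempered $\pi$ via (i), then handles adjoint groups by citing \cite[Theorems~4.4.1.c and~4.5.1.b]{Opd18} directly, then groups with $K$-anisotropic centre by applying property (k) to the isogeny $\mc G\to\mc G_\ad\times\mc G/\mc G_\der$, and finally general $\mc G$ by passing to $\mc G/Z(\mc G)_s$ via (c). It is this stepwise descent that pins the residual ambiguity down to exactly $X_\Wr(G/Z(G)_s)$; an appeal to (f), (g), (h) alone does not obviously do so.
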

\textbf{Remark.} We regard this as a local Langlands correspondence for unipotent representations.
We point out that for simple adjoint groups Theorem \ref{thm:1.1} differs somewhat from
the main results of \cite{LusUni1,LusUni2} -- which do not satisfy (d) and (e).
\begin{proof}
A bijection satisfying the properties (a)--(i) was exhibited in \cite[\S 5]{SolLLCunip}.
The construction involves some arbitrary choices, we will fix some of those here.

For property (j) see \cite[Lemma A.3]{FOS} and \cite[Lemma 2.4]{SolLLCunip}.
For property (k) we refer to \cite[Corollary 5.8 and \S 7]{SolFunct}.

Denote the set of tempered irreducible smooth $G$-representations by $\Irr_\temp (G)$ and 
let $\Phi_\bdd (G)$ be the collection of bounded L-parameters for $G$. It was shown in 
\cite[Theorem 4.5.1]{Opd18} that there exists a "Langlands parametrization"
\begin{equation}\label{eq:Lparamet}
\phi_{HII} : \Irr_{\unip,\temp}(G) \to \Phi_{\nr,\bdd}(G)
\end{equation}
which satisfies the above property (l) and is essentially unique. Notice that the image of 
$\phi_{HII}$ consists of L-parameters, not enhanced as before.
For supercuspidal representations both $\phi_{HII}$ and \cite{SolLLCunip} boil down to the
same source, namely \cite{FeOp,FOS}. There it is shown that, on the cuspidal level for a
Levi subgroup $M$ of $G$, the bijection 
\begin{equation}\label{eq:2.2}
\Irr_{\unip,\cusp}(M) \to \Phi_{\nr,\cusp}(M) 
\end{equation}
is unique up to twisting by $X_\Wr (M / Z(M)_s)$. For use in \cite{SolLLCunip} we may
pick any instance of \eqref{eq:2.2} from \cite[Theorem 2]{FOS}. For use in \cite[4.5.1]{Opd18}
there are some extra conditions, related to the existence of suitable spectral transfer
morphisms. We fix a set $\mf{Lev}(G)$ of representatives for the conjugacy classes of
Levi subgroups of $G$. For every $M \in \mf{Lev}(G)$ we choose a bijection \eqref{eq:2.2}
which satisfies all the requirements from \cite{Opd18}. In this way we achieve that
\begin{equation}\label{eq:2.4}
\phi_\pi = \phi_{HII}(\pi) \in \Phi_{\nr,\bdd}(M) \quad 
\text{for every tempered } \pi \in \Irr_{\unip,\cusp}(M) .
\end{equation}
To prove property (l), we will show that
\begin{equation}\label{eq:2.3}
\phi_\pi = \phi_{HII}(\pi) \in \Phi_{\nr,\bdd}(G) 
\quad \text{for all } \pi \in \Irr_{\unip,\temp}(G) .
\end{equation}
The infinitesimal (central) character of an L-parameter $\phi$ is defined as
\[
\mr{inf.ch.}(\phi) = G^\vee\text{-conjugacy class of }
\phi \Big( \Fr, \matje{q^{-1/2}}{0}{0}{q^{1/2}} \Big) \in G^\vee \Fr .
\]
By the definition of L-parameters this is a semisimple adjoint orbit, and by 
\cite[Lemma 6.4]{Bor} it corresponds to a unique $W(\mc G^\vee,\mc T^\vee)^\Fr$-orbit
in $T^\vee_\Fr$. That in turn can be interpreted as a central character of the 
Iwahori--Hecke algebra $\mc H (G^*,I^*)$ of the quasi-split inner form $G^*$ of $G$.

By \cite[Theorems 3.8.1 and 4.5.1]{Opd18} the Langlands parametrization 
$\phi_{HII}$ is completely characterized by the map
\begin{equation}\label{eq:2.infch}
\mr{inf.ch.} \circ \phi_{HII} : \Irr_{\unip,\temp}(G) \to G^\vee \Fr / G^\vee\text{-conjugacy} .
\end{equation}
Hence \eqref{eq:2.3} is equivalent to:
\begin{equation}\label{eq:1.13}
\mr{inf.ch.}(\phi_\pi) = \mr{inf.ch.}(\phi_{HII} (\pi)) \qquad \text{for all }
\pi \in \Irr_{\unip,\temp}(G). 
\end{equation}
By construction the cuspidal support map for enhanced L-parameters preserves infinitesimal
characters, see \cite[Definition 7.7 and (108)]{AMS1}. Then property (g) says
that inf.ch.$(\phi_\pi)$ does not change if we replace $\pi$ by its supercuspidal support. 

The map \eqref{eq:2.infch} is constructed in \cite{Opd18} in three steps:
\begin{itemize}
\item Let $\mc H_{\mf s}$ be the Hecke algebra associated to a Bushnell-Kutzko type for 
the Bernstein block $\Rep (G)^{\mf s}$ that contains $\pi$, as in \eqref{eq:1.4}.
Consider the image $\pi_{\mc H}$ of $\pi$ in $\Irr (\mc H_{\mf s})$.
\item Compute the central character of $\pi_{\mc H}$, an orbit for the 
Weyl group $W_{\mf s}$ acting on the complex torus $T_{\mf s}$ --
both attached to $\mc H_{\mf s}$ as described after \eqref{eq:1.4}.
\item Apply a spectral transfer morphism $\mc H_{\mf s} \leadsto \mc H (G^*,I^*)$ and the
associated map $T_{\mf s} \to T^\vee_\Fr / K_L^n$ -- see the definitions in \cite[\S 5.1]{Opd1}. 
This map sends the central character of $\pi_{\mc H}$ to a unique 
$W(\mc G^\vee,\mc T^\vee)^\Fr$-orbit in $T^\vee_\Fr$, which we interpret as a semisimple 
$G^\vee$-orbit in $G^\vee \Fr$.
\end{itemize}
For irreducible $\mc H_{\mf s}$-modules, the central character map corresponds to restriction 
to the maximal commutative subalgebra $\mc O (T_{\mf s})$ of $\mc H_{\mf s}$. There is a Levi 
subgroup $M$ of $G$ with a type, covered by the type for $\Rep (G)^{\mf s}$, whose Hecke 
algebra is $\mc O (T_{\mf s})$. The equivalence of categories \eqref{eq:1.4} is compatible 
with normalized parabolic induction and Jacquet restriction \cite[Lemma 4.1]{SolComp}, so the 
central character map for $\mc H_{\mf s}$ corresponds to the supercuspidal support map for 
$\Rep (G)^{\mf s}$. 

As in \cite[\S 3.1.1]{Opd2}, $\mc H_{\mf s} \leadsto \mc H (G^*,I^*)$ can be restricted to 
a spectral transfer morphism $\mc O (T_{\mf s}) \leadsto \mc H (M^*,I^*)$, where the Levi 
subgroup $M^*$ of $G^*$ is the quasi-split inner form of $M$. Up to adjusting by an element of 
$W(\mc G^\vee,\mc T^\vee)^\Fr$, these two spectral transfer morphisms are represented by the
same map $T_{\mf s} \to T^\vee_\Fr / K_L^n$. Consequently \eqref{eq:2.infch} does not change 
if the input $\pi$ is replaced by its supercuspidal support. 
These considerations reduce \eqref{eq:1.13} and \eqref{eq:2.3} to \eqref{eq:2.4}. 

Now we have the bijection of the theorem and all its properties, except for the asserted
uniqueness. The L-parameters for $\Irr_{\unip,\temp}(G)$ completely determine the L-parameters
for all (not necessarily tempered) irreducible unipotent $G$-representations, that follows
from the compatibility with the Langlands classification \cite[Lemma 5.10]{SolLLCunip}. 
Hence it suffices to address the essential uniqueness for tempered representations and bounded 
L-parameters. For adjoint groups it was shown in \cite[Theorems 4.4.1.c and 4.5.1.b]{Opd18}.

The case where $Z(\mc G)$ is $K$-anisotropic is reduced to the adjoint case in the proof
of \cite[Theorem 4.5.1]{Opd18}. This proceeds by imposing compatibility of the Langlands
parametrization $\phi_{HII}$ with the isogeny $\mc G \to \mc G_\ad \times \mc G / \mc G_\der$, 
in the sense that: 
\begin{itemize}
\item every irreducible tempered unipotent representation of $G$ should be "liftable"
in an essentially unique way to one of $G_\ad \times (\mc G / \mc G_\der)(K)$, 
\item that should determine the L-parameters. 
\end{itemize}
In this way one concludes essential uniqueness in \cite[Theorem 4.5.1.b]{Opd18}, 
but in a weaker sense than we want. However, the compatibility of 
$\mc G \to \mc G_\ad \times \mc G / \mc G_\der$ with L-parameters actually is a requirement,
it is an instance of property (k).
If we invoke that, the argument for \cite[Theorem 4.5.1]{Opd18} shows that the non-uniqueness
(when $Z(\mc G)$ is $K$-anisotropic) is the essentially the same as in the adjoint case.
That is, the parametrization is unique up to twists by the image of $X_\Wr (G_\ad) \cong 
Z({G_\ad}^\vee)^\Fr$ in ${}^L G$, which is just $X_\Wr (G)$.

Finally we consider the case where $\mc G$ is reductive and the maximal $K$-split central 
torus $Z(\mc G)_s$ is nontrivial. Then $G / Z(G)_s = (\mc G / Z(\mc G)_s)(K)$ does have
$K$-anisotropic centre. The Langlands correspondence for $\Irr_\unip (G)$ is deduced from 
that for $\Irr_\unip (G / Z(G)_s)$, see \cite[\S 15]{FOS} and \cite[p.35]{Opd18}. 
What happens for $Z(G)_s$ is determined by the natural LLC for tori, which we have included 
as a requirement via property (c). As noted on \cite[p. 38]{FOS}, this renders a LLC for 
$\Irr_\unip (G)$ precisely as canonical as for $\Irr_\unip (G/Z(G)_s)$. In view of the cases 
considered above, the only non-uniqueness comes from twisting by $X_\Wr (G / Z(G)_s)$.
\end{proof}

Next we recall some results from \cite{SolFunct} about the behaviour of unipotent 
representations and enhanced L-parameters under isogenies of reductive groups. We will 
formulate them for quotient maps, because we will only need them for such isogenies. 

Let $\mc Z$ be a central $K$-subgroup of $\mc G$ and consider the quotient map
\[
\eta : \mc G \to \mc G' := \mc G / \mc Z . 
\]
The dual homomorphism $\eta^\vee : G'^\vee \to G^\vee$ gives rise to maps
\[
{}^L \eta : {}^L G' \to {}^L G \quad \text{and} \quad 
\Phi (\eta) : \Phi (G') \to \Phi (G) .
\]
For $\phi' \in \Phi (G')$ and $\phi = \Phi (\eta) \phi' \in \Phi (G)$, $\mc A_{\phi'}$ is
a normal subgroup of $\mc A_\phi$ and $\mc A_\phi / \mc A_{\phi'}$ is abelian
\cite[Lemma 4.1]{SolFunct}.

The map between groups of $K$-rational points $\eta : G \to G'$ need not be surjective,
but in any case its cokernel is compact and commutative. This implies that the pullback
functor
\[
\eta^* : \Rep (G') \to \Rep (G)
\]
preserves finite length and complete reducibility \cite{Sil}. It is easily seen, for
instance from \cite[Proposition 7.2]{SolFunct}, that $\eta^*$ maps one Bernstein block
$\Rep (G')^{\mf s'}$ into a direct sum of finitely many Bernstein blocks $\Rep (G)^{\mf s}$.

\begin{thm}\label{thm:3.1} 
\textup{\cite[Theorem 3 and Lemma 7.3]{SolFunct}}\\
Let $\mc G$ be a connected reductive $K$-group which splits over an unramified extension.
Let $(\phi',\rho') \in \Phi_{\nr,e}(G')$ and let $\pi (\phi',\rho') \in \Irr (G')$
be associated to it in Theorem \ref{thm:1.1}. Then, with $\phi = \Phi (\eta) \phi'$:
\[
\eta^* \pi (\phi',\rho') = 
\bigoplus_{\rho \in \Irr (\mc A_\phi)} \hspace{-3mm} \Hom_{\mc A_\phi} 
\Big( \mr{ind}_{\mc A_{\phi'}}^{\mc A_\phi} \rho', \rho \Big) \otimes \pi (\phi,\rho) =
\bigoplus_{\rho \in \Irr (\mc A_\phi)} \hspace{-3mm} 
\Hom_{\mc A_{\phi'}} (\rho', \rho) \otimes \pi (\phi,\rho).
\]
\end{thm}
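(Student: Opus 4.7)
The plan is to combine property (k) of Theorem \ref{thm:1.1} with Clifford theory for the finite-group inclusion $\mc A_{\phi'} \triangleleft \mc A_\phi$, whose quotient is abelian, as recalled just before the theorem.

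First, I would invoke property (k) of Theorem \ref{thm:1.1} for the quotient map $\eta: \mc G \to \mc G' = \mc G / \mc Z$: this identifies the irreducible constituents of $\eta^*\pi(\phi',\rho')$ with members of the L-packet $\Pi_\phi(G)$, which are indexed by $\Irr(\mc A_\phi)$. Writing
\[
\eta^*\pi(\phi',\rho') \;=\; \bigoplus_{\rho \in \Irr(\mc A_\phi)} m_\rho \, \pi(\phi,\rho),
\]
the task reduces to computing the multiplicities $m_\rho \ge 0$. The two Hom-space formulations in the statement are manifestly equal by Frobenius reciprocity ($\mr{ind}_{\mc A_{\phi'}}^{\mc A_\phi}$ is left adjoint to restriction), so it suffices to establish either one.

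Second, I would exploit the action of weakly unramified characters. The cokernel of $\eta: G \to G'$ is compact abelian, so $X := \widehat{G'/\eta(G)}$ is a finite subgroup of weakly unramified characters of $G'$, and every $\chi \in X$ satisfies $\eta^*(\pi' \otimes \chi) \cong \eta^*\pi'$. By property (b) of Theorem \ref{thm:1.1}, this $X$-action translates on the Galois side to a twisting action on the enhancement $\rho'$. The key point is a natural identification $X \cong \widehat{\mc A_\phi / \mc A_{\phi'}}$ coming from Kottwitz-type dualities for $\mc Z$; granting this, two enhancements $\rho'_1, \rho'_2 \in \Irr(\mc A_{\phi'})$ give the same pullback precisely when they lie in a common $\widehat{\mc A_\phi/\mc A_{\phi'}}$-orbit. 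This pairs up perfectly with the orbit structure that governs restriction of $\rho \in \Irr(\mc A_\phi)$ to $\mc A_{\phi'}$.

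Third, I would close the loop with Clifford theory for the inclusion $\mc A_{\phi'} \triangleleft \mc A_\phi$ with abelian quotient: for each $\rho \in \Irr(\mc A_\phi)$, $\rho|_{\mc A_{\phi'}}$ is a sum over a single $\widehat{\mc A_\phi/\mc A_{\phi'}}$-orbit in $\Irr(\mc A_{\phi'})$, and $\dim \Hom_{\mc A_{\phi'}}(\rho', \rho|_{\mc A_{\phi'}})$ records the multiplicity. Matching this with the orbit picture of the previous step and pinning down the overall constant by a total-length count — most cleanly done by transporting the statement via the Bushnell--Kutzko equivalence \eqref{eq:1.4} into a module-theoretic identity over the relevant affine Hecke algebras, where dimensions are tractable — yields $m_\rho = \dim \Hom_{\mc A_{\phi'}}(\rho', \rho|_{\mc A_{\phi'}})$.

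The main obstacle is the second step: one must verify that the twisting action of $X$ on $\Irr(G')$ corresponds, under the LLC of Theorem \ref{thm:1.1}, precisely to the Clifford-theoretic $\widehat{\mc A_\phi/\mc A_{\phi'}}$-action on enhancements, with compatible normalizations. This requires a careful tracking through the construction of the LLC for unipotent representations in \cite{SolLLCunip} and is essentially the content of the functoriality results of \cite{SolFunct} cited in the statement.
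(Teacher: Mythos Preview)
The paper does not prove Theorem \ref{thm:3.1}; it is stated as a direct citation of \cite[Theorem 3 and Lemma 7.3]{SolFunct}, with no argument given in the present paper. Your sketch correctly isolates the relevant ingredients (property (k) of Theorem \ref{thm:1.1}, $X_\Wr$-equivariance, Clifford theory for $\mc A_{\phi'} \triangleleft \mc A_\phi$ with abelian quotient), and you yourself concede in the final paragraph that the verification ``is essentially the content of the functoriality results of \cite{SolFunct} cited in the statement.'' So there is nothing substantive to compare: the paper's proof \emph{is} the citation, and your proposal is an informal outline of what that cited proof establishes.

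One small overstatement worth flagging: property (k) as formulated only says that $\Pi_\phi(G)$ is the set of constituents of $\eta^*(\pi')$ as $\pi'$ ranges over \emph{all} of $\Pi_{\phi'}(G')$; it does not by itself tell you which $\rho$ occur in the pullback of a \emph{fixed} $\pi(\phi',\rho')$, nor with what multiplicity. So your step 1 delivers less than you suggest, and the genuine content lies entirely in steps 2--3 --- the matching of the $\widehat{\mc A_\phi/\mc A_{\phi'}}$-action with the twisting by weakly unramified characters, and the resulting multiplicity count --- which is exactly the work carried out in \cite{SolFunct}.
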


Let us work out a few more features of this result.

\begin{lem}\label{lem:3.2}
\enuma{
\item All irreducible constituents of the $G$-representation $\eta^* \pi (\phi',\rho')$
have the same Plancherel density and appear with the same multiplicity.
This multiplicity is one if $\pi (\phi',\rho')$ is supercuspidal.
\item All $\rho \in \Irr (\mc A_\phi)$ with $\Hom_{\mc A_{\phi'}} (\rho', \rho) \neq 0$
have the same dimension. 
\item For any such $\rho$, the length of the $G$-representation $\eta^* \pi (\phi',\rho')$ is 
\[
\dim (\rho') [\mc A_\phi : \mc A_{\phi'}] \dim (\rho)^{-1}.
\]
}
\end{lem}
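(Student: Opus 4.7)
The argument will use Clifford theory for the normal inclusion $\mc A_{\phi'} \triangleleft \mc A_\phi$ (whose quotient is abelian by Theorem \ref{thm:3.1}), combined with the explicit decomposition of $\eta^*\pi(\phi',\rho')$ already provided. Let $H \leq \mc A_\phi$ denote the stabilizer of $\rho'$ under the $\mc A_\phi$-action by conjugation on $\Irr(\mc A_{\phi'})$. Since $H$ is the preimage of a subgroup of the abelian quotient $\mc A_\phi / \mc A_{\phi'}$, it is normal in $\mc A_\phi$. Standard Clifford theory then identifies the $\rho \in \Irr(\mc A_\phi)$ whose restriction to $\mc A_{\phi'}$ contains $\rho'$ as exactly those of the form $\rho = \mr{ind}_H^{\mc A_\phi}\tilde\rho$, with $\tilde\rho \in \Irr(H)$ being $\mc A_{\phi'}$-isotypic of type $\rho'$. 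Each such $\tilde\rho$ corresponds to an irreducible projective representation of $H / \mc A_{\phi'}$ for a fixed Schur cocycle, and because $H / \mc A_{\phi'}$ is abelian, all of these projective representations share a common dimension $d$. Consequently $\tilde\rho|_{\mc A_{\phi'}} = d \cdot \rho'$ and $\dim \rho = d\,[\mc A_\phi : H]\,\dim \rho'$ for every such $\rho$, which proves (b).

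For (c), I apply Theorem \ref{thm:3.1} together with Frobenius reciprocity: the multiplicity of $\pi(\phi,\rho)$ in $\eta^*\pi(\phi',\rho')$ equals the multiplicity of $\rho$ as a constituent of $\mr{ind}_{\mc A_{\phi'}}^{\mc A_\phi}\rho'$. Summing these multiplicities over the distinct $\rho$'s yields the length of $\eta^*\pi(\phi',\rho')$. Dividing the total dimension $\dim \mr{ind}_{\mc A_{\phi'}}^{\mc A_\phi}\rho' = [\mc A_\phi : \mc A_{\phi'}]\dim \rho'$ by the common constituent dimension $\dim \rho$ from (b) gives the formula
\[
\mathrm{length}\bigl(\eta^*\pi(\phi',\rho')\bigr) = \frac{[\mc A_\phi : \mc A_{\phi'}]\dim \rho'}{\dim \rho}.
\]

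For (a), the common multiplicity of each constituent is $\dim \Hom_{\mc A_{\phi'}}(\rho',\rho|_{\mc A_{\phi'}}) = d$, independent of the choice of $\rho$ by the Clifford analysis above. In the supercuspidal case I would argue that cuspidality of $(\phi',\rho')$ in the sense of enhanced L-parameters forces $d = 1$, by the rigidity of the cuspidal pairs of Lusztig's generalized Springer correspondence, citing \cite{FOS} (and \cite{SolLLCunip}) for the precise structure of cuspidal unipotent enhancements. For equality of the Plancherel densities, the plan is to observe that two constituents $\pi(\phi,\rho_i)$ and $\pi(\phi,\rho_j)$ are Clifford-related via a character of $\mc A_\phi / \mc A_{\phi'}$, which under the equivariance of Theorem \ref{thm:1.1}(b) corresponds to a twist by a weakly unramified unitary character of $G$; Plancherel densities are unchanged by such unitary twists.

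The main obstacle I anticipate is the Plancherel-density part of (a), since one must verify that the Clifford twist genuinely lifts to a unitary weakly unramified character on $G$ rather than only to a character of some abstract quotient of $\mc A_\phi$. If that correspondence is not transparent, a fallback is to pass through the Bushnell--Kutzko equivalence \eqref{eq:1.4} to the affine Hecke algebra $\mc H(G,\hat P_{\mf f},\hat \sigma)$ and use \eqref{eq:1.5}: the Plancherel density of a discrete series module depends only on the $W(R_{\mf f})$-orbit of its central character, and the Clifford-related $\rho_i$'s all correspond to modules sharing this central character because the underlying representations $\pi(\phi,\rho_i)$ have the same cuspidal support by Theorem \ref{thm:1.1}(g).
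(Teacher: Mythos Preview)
Your arguments for (b) and (c) are correct and essentially match the paper's: both use Clifford theory for the normal inclusion $\mc A_{\phi'}\triangleleft\mc A_\phi$ with abelian quotient. One minor difference in logical order: you deduce the common dimension $d$ directly from the fact that irreducible projective representations of a finite abelian group (for a fixed cocycle) all have the same dimension, whereas the paper first proves the equal-multiplicity statement in (a) on the $p$-adic side and then uses it, via the formula $\dim\rho=[\mc A_\phi:(\mc A_\phi)_{\rho'}]\dim(\rho')\dim\Hom_{\mc A_{\phi'}}(\rho',\rho)$, to conclude (b). Your ordering is self-contained and perfectly valid.

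The Plancherel-density part of (a), however, has a genuine gap. Your plan is to relate the constituents $\pi(\phi,\rho_i)$ by tensoring the enhancement with a character of $\mc A_\phi/\mc A_{\phi'}$ and then invoke Theorem~\ref{thm:1.1}(b). But the $X_\Wr(G)$-equivariance in Theorem~\ref{thm:1.1}(b) acts on enhanced L-parameters by translating $\phi$ (multiplication by an element of $Z(G^\vee)^{\mb W_K}$) while leaving $\rho$ unchanged; here the situation is the opposite, with $\phi$ fixed and $\rho$ varying. So there is no reason that $\pi(\phi,\rho\otimes\chi)$ should equal $\pi(\phi,\rho)\otimes\chi_G$ for a weakly unramified $\chi_G$. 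Your fallback via \eqref{eq:1.5} does not rescue this: that formula is only for discrete series, and even there the constant $d_{\mc H,\delta}$ may depend on $\delta$, not merely on its central character.

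The paper's route for (a) is much simpler and bypasses the LLC entirely. Since $\pi'=\pi(\phi',\rho')$ is an irreducible $G'$-representation, the group $G'$ acts transitively on the irreducible $G$-constituents of $\eta^*\pi'$ via conjugation: if $V$ is one such constituent then $\sum_{g'\in G'} g'V$ is a nonzero $G'$-subrepresentation, hence all of $\pi'$. Conjugation by any $g'\in G'$ is a unimodular automorphism of $G$ and therefore preserves Plancherel density; and since the isotypic components are permuted transitively by $G'$, they all have the same length, giving equal multiplicities (this is \cite[Lemma 2.1]{GeKn}). Multiplicity one in the supercuspidal case is quoted from \cite[Lemma 7.1]{SolFunct}, again a direct $p$-adic statement, rather than via structure of cuspidal enhancements on the Galois side.
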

\begin{proof}
(a) We abbreviate $\pi' = \pi (\phi',\rho')$. Since this $G'$-representation is irreducible,
all irreducible subrepresentations of $\eta^* (\pi')$ are equivalent under the action of
$G'$ on $\Irr (G)$. Conjugation with $g' \in G'$ defines a unimodular automorphism of $G$,
so $\mr{Ad}(g')^*$ preserves the Plancherel density on $\Irr (G)$.

Similarly, all isotypic components of $\eta^* (\pi')$ are $G'$-associate. As already shown
in \cite[Lemma 2.1]{GeKn}, this implies that every irreducible constituent of $\eta^* (\pi')$
appears with the same multiplicity. By \cite[Lemma 7.1]{SolFunct} this multiplicity is
one if $\pi'$ is supercuspidal.\\
(b) We briefly recall how to construct irreducible representations of $\mc A_\phi$ that contain
$\rho'$. Let $(\mc A_\phi )_{\rho'}$ be the stabilizer of $\rho'$ in $\mc A_\phi$ (with
respect to the action of $\mc A_\phi$ on $\Irr (\mc A_{\phi'})$ coming from conjugation).
The projective action of $(\mc A_\phi )_{\rho'}$ on $V_{\rho'}$ gives rise to a 2-cocycle
$\kappa_{\rho'}$ and a twisted group algebra $\C [ (\mc A_\phi )_{\rho'},\kappa_{\rho'}]$.
Clifford theory (in the version \cite[Proposition 1.1]{AMS1}) says that: 
\begin{itemize}
\item for every $(\tau,V_\tau) \in \Irr \big (\C [ (\mc A_\phi )_{\rho'},\kappa_{\rho'}] \big)$,
$\tau \ltimes \rho := \mr{ind}_{(\mc A_\phi )_{\rho'}}^{\mc A_\phi} (V_\tau \otimes
V_{\rho'})$ is an irreducible $\mc A_\phi$-representation containing $\rho'$;
\item every irreducible $\mc A_\phi$-representation containing $\rho'$ is of the form
$\tau \ltimes \rho'$.
\end{itemize}
For $\rho = \tau \ltimes \rho'$ we see that 
\[
\Hom_{\mc A_{\phi'}}(\rho',\rho) = \Hom_{\mc A_{\phi'}} \big( V_{\rho'},
\mr{ind}_{(\mc A_\phi )_{\rho'}}^{\mc A_\phi} (V_\tau \otimes V_{\rho'}) \big) \cong
\Hom_{\mc A_{\phi'} } (V_{\rho'}, V_\tau \otimes V_{\rho'}) \cong V_\tau.
\]
We can compute the dimension of $\rho = \tau \ltimes \rho'$ in these terms:
\begin{equation}\label{eq:3.6}
\dim (\rho) = [\mc A_\phi : (\mc A_\phi )_{\rho'}] \dim (V_\tau) \dim (V_{\rho'}) =
[\mc A_\phi : (\mc A_\phi )_{\rho'}] \dim (\rho') \dim \Hom_{\mc A_{\phi'}}(\rho',\rho) .
\end{equation}
By Theorem \ref{thm:3.1}
\[
\Hom_G (\pi (\phi,\rho), \eta^* (\pi')) \cong \Hom_{\mc A_{\phi'}}(\rho',\rho) .
\]
By part (a) this space is independent of $\rho$ (as long as it is nonzero). With \eqref{eq:3.6}
we conclude that dim$(\rho)$ is the same for all such $\rho$.\\
(c) By Frobenius reciprocity 
\[
\Hom_{\mc A_\phi} \Big( \mr{ind}_{\mc A_{\phi'}}^{\mc A_\phi} \rho', \rho \Big) 
\cong \Hom_{\mc A_{\phi'}}(\rho',\rho) .
\]
Hence $\mr{ind}_{\mc A_{\phi'}}^{\mc A_\phi} \rho'$ is a direct sum of irreducible 
subrepresentations of common dimension $\dim (\rho)$. Then its length is
\[
\dim \big( \mr{ind}_{\mc A_{\phi'}}^{\mc A_\phi} \rho' \big) \dim (\rho)^{-1} =
\dim (\rho') [\mc A_\phi : \mc A_{\phi'}] \dim (\rho)^{-1}.
\]
By Theorem \ref{thm:3.1} that is also the length of $\eta^* (\pi')$.
\end{proof}

\section{Affine Hecke algebras}
\label{sec:AHA}

As before, $\mc G$ denotes a connected reductive $K$-group which splits over an 
unramified extension. Let $\mc G_\ad = \mc G / Z(\mc G)$ be its adjoint group.
We intend to investigate the behaviour of the formal degrees with respect to 
the quotient map $\eta : \mc G \to \mc G_\ad$. As preparation, we consider the analogous
question for the affine Hecke algebras from Section \ref{sec:background}.

This means that we focus on one Bernstein component $\Rep (G)_{(\hat P_{\mf f},\hat \sigma)}$
for $G$ and one Bernstein component $\Rep (G_\ad)_{(\hat P_{\mf f_\ad},\hat \sigma_\ad)}$ for 
$G_\ad$, such that the pullback of the latter has nonzero components in the former. 
As already noted in \cite[\S 13]{FOS} and \cite[\S 3.3]{SolLLCunip}, we may assume that
$\mf f_\ad = \mf f$ and that underlying cuspidal unipotent representations 
$\sigma$ and $\sigma_\ad$ are essentially the same. That is, they are defined on the same 
vector space $V_\sigma$ and $\sigma$ is the pullback of $\sigma_\ad$ via the natural map 
$\overline{\mc G_f^\circ}(k) \to \overline{\mc G_{\ad,f}^\circ}(k)$. More precisely, we may
even assume that $\hat \sigma$ is the pullback of $\hat \sigma_\ad$ along 
$\eta : \hat P_{\mf f} \to \hat P_{\mf f,\ad}$.

In this setting $\eta$ induces an inclusion
\begin{equation}\label{eq:3.12}
\eta_{\mc H} : \mc H (G,\hat{P}_{\mf f},\hat \sigma) \to
\mc H (G_\ad,\hat{P}_{\ad,\mf f},\hat \sigma_\ad) ,
\end{equation}
which we need to analyse in more detail. Let $X_{\mf f,\ad}$ denote the lattice $X_{\mf f}$ 
for $G_\ad$. From \cite[Proposition 3.1 and Theorem 3.3.b]{SolLLCunip} we see that $X_{\mf f}$ 
can be regarded as a sublattice of $X_{\mf f,\ad}$, and that
\begin{equation}\label{eq:3.9}
X_{\mf f,\ad} / X_{\mf f} \cong 
\big( \Omega_{G_\ad,\mf f} \big/ \Omega_{G_\ad,\mf f,\tor} \big) \big/ 
\big( \Omega_{G,\mf f} \big/ \Omega_{G,\mf f,\tor} \big) .
\end{equation}
To make sense of the right hand side, we remark that by construction the natural map 
$\Omega_{G,\mf f} \to \Omega_{G_\ad,\mf f}$ is injective. The group \eqref{eq:3.9} 
is finite if and only if $Z(G)$ is compact.

We recall from \cite[\S 1.20]{LusUni1} and \cite[(42)]{SolLLCunip} that
$\Omega_{G_\ad,\mf f} / \Omega_{G_\ad,\mf f,\tor}$ acts on 
$\mc H (G,\hat{P}_{\mf f},\hat \sigma)$ by algebra automorphisms, and that
\begin{equation}\label{eq:3.10}
\mc H (G,\hat{P}_{\mf f},\hat \sigma) \cong 
\mc H_\af (G,P_{\mf f},\sigma) \rtimes \Omega_{G,\mf f} / \Omega_{G,\mf f,\tor} .
\end{equation}
By \cite[Lemma 3.5]{SolLLCunip} $\mc H_\af (G,P_{\mf f},\sigma)$ and all the data for that
algebra are the same for $G$ and for $G_\ad$. So the difference between \eqref{eq:3.10}
and its analogue for $G_\ad$ lies only in the finite group $\Omega_{G,\mf f} / 
\Omega_{G,\mf f,\tor}$. The inclusion \eqref{eq:3.12} is the identity on 
$\mc H_\af (G,P_{\mf f},\sigma)$.

Let $\tau$ and $\tau_\ad$ denote the normalized traces of the affine Hecke algebras $\mc H 
(G,\hat{P}_{\mf f},\hat \sigma)$ and $\mc H (G_\ad,\hat{P}_{\ad,\mf f},\hat \sigma_\ad)$.
By \eqref{eq:1.10} and \eqref{eq:1.7}
\begin{equation}\label{eq:3.8}
\frac{\tau (N_e)}{\tau_\ad (N_e)} = 
\frac{|\Omega_{G_\ad,\mf f,\tor}|}{|\Omega_{G,\mf f,\tor}|} .
\end{equation}
Both \eqref{eq:3.9} and \eqref{eq:3.8} contribute to the difference between the Plancherel
measures for $\mc H (G,\hat{P}_{\mf f},\hat \sigma)$ and for
$\mc H (G_\ad,\hat{P}_{\ad,\mf f},\hat \sigma_\ad)$. For the latter that is clear, for
the former we compute the effect below.

We abbreviate $A = \Omega_{G_\ad,\mf f} / \Omega_{G_\ad,\mf f,\tor}$, 
$\mc H_\ad = \mc H_\af (G,P_{\mf f},\sigma) \rtimes A$ and 
\[
\mc H = \mc H_\af (G,P_{\mf f},\sigma) \rtimes \Omega_{G,\mf f} / \Omega_{G,\mf f,\tor}.
\]
Since the abelian group $A$ acts on $\mc H_\af (G,P_{\mf f},\sigma)$ and (trivially) on
$\Omega_{G,\mf f} / \Omega_{G,\mf f,\tor}$, \eqref{eq:3.10} shows that it acts on $\mc H$
by algebra automorphisms. 

\begin{lem}\label{lem:4.4}
Let $V$ be any irreducible $\mc H_\ad$-module. All the constituents of $\eta_{\mc H}^* (V)$ 
have the same dimension and the same Plancherel density, and they appear with the same multiplicity.
\end{lem}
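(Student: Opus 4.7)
The strategy is to exploit the group $A$ as a mediator between $\mc H$ and $\mc H_\ad$, in close analogy with the Clifford-theoretic arguments used in Lemma \ref{lem:3.2}. Concretely, the inclusion $\eta_{\mc H}$ can be rewritten as $\mc H = \mc H_\af (G,P_{\mf f},\sigma) \rtimes B \hookrightarrow \mc H_\af (G,P_{\mf f},\sigma) \rtimes A = \mc H_\ad$, where $B := \Omega_{G,\mf f}/\Omega_{G,\mf f,\tor}$ sits inside $A$. Since $A$ is abelian, conjugation by the basis elements $N_a \in \mc H_\ad$ (for $a \in A$) preserves the subalgebra $\mc H$, and the resulting automorphisms of $\mc H$ are precisely those obtained by extending the $A$-action on $\mc H_\af (G,P_{\mf f},\sigma)$ trivially across $\C[B]$. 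The abelianness of $A$ is crucial here, as it makes the relation $a \cdot (h \cdot b) = a(h) \cdot b$ compatible with the crossed product structure.

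Now let $V$ be an irreducible $\mc H_\ad$-module; because $\mc H_\ad$ is finite over its centre, $V$ is finite-dimensional. For each $a \in A$ the $\C$-linear automorphism $T_a := N_a|_V$ intertwines the $\mc H$-module $\eta_{\mc H}^*(V)$ with its twist by $a$, so whenever $W \subset \eta_{\mc H}^*(V)$ is an irreducible $\mc H$-submodule, $T_a(W) \cong W^a$ is another one. Since $V$ is irreducible under $\mc H_\ad$, the $\mc H_\ad$-submodule generated by any $\mc H$-submodule $W$ equals $V$; hence the set of irreducible constituents of $\eta_{\mc H}^*(V)$ forms a single orbit under the $A$-action on $\Irr (\mc H)$.

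From this orbit description the three claims follow without difficulty. Dimensions are manifestly preserved by the linear isomorphisms $T_a$. Next, the action of $A$ permutes the distinguished basis $\{ N_w \}$ while fixing $N_e$, so by \eqref{eq:1.10} it preserves both the trace $\tau$ and the involution $*$; thus $A$ acts by automorphisms of Hilbert algebras, which preserves the Plancherel measure on $\Irr (\mc H)$ and gives constant Plancherel density along the orbit. Finally, the $\mc H$-isotypic decomposition $\eta_{\mc H}^*(V) = \bigoplus_{W} V_{W}$ is permuted by $A$, with $T_a$ carrying $V_W$ isomorphically onto $V_{W^a}$; since $V$ is $\mc H_\ad$-irreducible, $A$ acts transitively on the set of nonzero summands, so all $\dim V_W$ agree, and combined with the constancy of $\dim W$ along the orbit this forces the multiplicities $\dim V_W / \dim W$ to coincide. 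The main obstacle is not any individual step but the correct set-up of the $A$-action on $\mc H$; once that is in place, everything else is routine Clifford theory.
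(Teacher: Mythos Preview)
Your proof is correct and follows essentially the same route as the paper's: both use that conjugation by the elements $N_\omega$ (for $\omega \in A$) normalizes $\mc H$, so that the irreducible constituents of $\eta_{\mc H}^*(V)$ form a single $A$-orbit under twisting, and then invoke that these conjugations are trace-preserving automorphisms of $\mc H$ to conclude equality of dimensions, Plancherel densities, and (via permutation of the isotypic components) multiplicities. The paper's argument is terser but structurally identical.
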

\begin{proof}
If $V_{\mc H}$ is any irreducible submodule of $\eta_{\mc H}^* (V)$, \eqref{eq:3.10} shows that 
\begin{equation}\label{eq:3.24}
V = \sum\nolimits_{\omega \in A} N_\omega \cdot V_{\mc H} .
\end{equation}
As $N_\omega$ normalizes the subalgebra $\mc H$ of $\mc H_\ad$, each $N_\omega \cdot V_{\mc H}$
is an irreducible $\mc H$-submodule of $V$. Consequently
\begin{equation}\label{eq:3.25}
\text{every constituent of } \eta_{\mc H}^* (V) \text{ is isomorphic to }
\mr{Ad}(N_\omega)^* V_{\mc H} \text{ for some } \omega \in A .
\end{equation}
Taking into account that conjugation by $N_\omega$ is a trace-preserving automorphism
of $\mc H$, \eqref{eq:3.25} shows that all the constituents of $\eta_{\mc H}^* (V)$ have the same 
dimension and the Plancherel density.
Further, we see from \eqref{eq:3.24} that any two $\mc H$-isotypic submodules of $V$ are in 
bijection, via multiplication with a suitable $N_\omega$. Hence all constituents of 
$\eta_{\mc H}^* (V)$ appear with the same multiplicity in that $\mc H$-module.
\end{proof}

This rough analysis of $\eta_{\mc H}^*$ does not yet suffice, we need more precise results
from Clifford theory. Write 
\[
C = \Irr ( X_{\mf f,\ad} / X_{\mf f} ) .
\]
By \eqref{eq:3.9}, $C$ can also be regarded as the character group of
\[
\big( \Omega_{G_\ad,\mf f} \big/ \Omega_{G_\ad,\mf f,\tor} \big) \big/ 
\big( \Omega_{G,\mf f} \big/ \Omega_{G,\mf f,\tor} \big) .
\]
Using \eqref{eq:3.10}, every $c \in C$ determines an automorphism of $\mc H_\ad$, namely
\[
c \cdot (h \otimes N_\omega) = h \otimes c(\omega) N_\omega 
\qquad h \in \mc H_\af (G,P_{\mf f},\sigma), \omega \in A .
\]
We note that $\mc H_\ad^C = \mc H$.

The restriction of modules from $\mc H_\ad$ to $\mc H_\ad^C$ was investigated in 
\cite[Appendix]{RaRa}. Let $C_V$ be the stabilizer (in $C$) of the isomorphism class of
$V \in \Irr (\mc H_\ad)$. For every $c \in C$ there exists an isomorphism of 
$\mc H$-modules
\[
i_c : V \to c^* V .
\]
By Schur's lemma $i_c$ is unique up to scalars, and thus the $i_c$ furnish a projective 
action of $C$ on $V$. Our particular situation is favourable because the action of $C$ on 
$\mc H_\ad$ is free, in the sense that it acts freely on a vector space basis). This can be 
exploited to analyse the intertwining operators $i_c$. 

\begin{lem}\label{lem:4.5}
The group $C_V$ acts linearly on $V$, by $\mc H$-module automorphisms. 
\end{lem}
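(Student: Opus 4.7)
The projective $C_V$-action given by the $\mc H_\ad$-linear intertwiners $i_c : V \to c^*V$ (unique up to scalar by Schur's lemma on $V \in \Irr (\mc H_\ad)$) defines a $2$-cocycle $\kappa_V \in Z^2(C_V, \C^\times)$ via $i_c \circ i_{c'} = \kappa_V(c, c')\, i_{cc'}$. The lemma is equivalent to the assertion that $\kappa_V$ is a coboundary, so that the $i_c$ can be rescaled into a genuine linear action of $C_V$ on $V$ by $\mc H$-module automorphisms.

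My first step is to rephrase everything using the $Q$-grading on $\mc H_\ad$, where $Q = \big( \Omega_{G_\ad,\mf f}/\Omega_{G_\ad,\mf f,\tor} \big) \big/ \big( \Omega_{G,\mf f}/\Omega_{G,\mf f,\tor} \big)$ is Pontryagin dual to $C$. By \eqref{eq:3.10} and \eqref{eq:3.9}, $\mc H_\ad = \bigoplus_{\omega \in Q} \mc H \, N_\omega$ with $\mc H_e = \mc H$, and $c \in C$ acts on $\mc H \, N_\omega$ by the scalar $c(\omega)$. In particular $c^*V$ and $V$ coincide as $\mc H$-modules, and an intertwiner $i_c$ is exactly an $\mc H$-linear automorphism $\phi_c$ of $V$ satisfying
\[
\phi_c\, \rho(N_\omega)\, \phi_c^{-1} = c(\omega)\, \rho(N_\omega) \qquad (\omega \in Q),
\]
where $\rho$ denotes the $\mc H_\ad$-action on $V$.

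The crucial input is the freeness of the $C$-action on the basis $\{N_\omega\}$: each character of $C$ appears with multiplicity exactly one in the $\mc H$-module decomposition of $\mc H_\ad$, so $\mc H \subset \mc H_\ad$ is a $C$-Galois extension. Standard Galois/Morita theory (as in the appendix of \cite{RaRa}) then yields an equivalence $\mr{Mod}(\mc H_\ad \rtimes C) \simeq \mr{Mod}(\mc H)$, under which every irreducible $(\mc H_\ad \rtimes C)$-module restricts along $\mc H_\ad \hookrightarrow \mc H_\ad \rtimes C$ to a sum $\bigoplus_{[c] \in C/C_V} c^* V$ for a single $V \in \Irr(\mc H_\ad)$, and carries an intrinsic linear $C$-action whose restriction to each summand is a genuine linear action of its stabilizer. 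Applied to our given $V$, this produces an $\mc H_\ad \rtimes C_V$-structure on $V$, that is, a linear $C_V$-action $\{ \tilde \phi_c \}$ satisfying the same defining relation as the $\phi_c$; by Schur's lemma on $V$, $\tilde \phi_c = \lambda_c \phi_c$ for some $\lambda_c \in \C^\times$, so $\kappa_V(c, c') = \lambda_c \lambda_{c'} \lambda_{cc'}^{-1}$ is a coboundary. The main obstacle is setting up this Galois/Morita equivalence in a form that explicitly produces the $C_V$-action on a chosen $\mc H_\ad$-irreducible summand, which is precisely what the appendix of \cite{RaRa} provides for graded algebras with free grading action.
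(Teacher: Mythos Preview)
Your approach is correct but takes a considerably more abstract route than the paper. The paper's proof is a direct two-line construction: it fixes an irreducible $\mc H$-submodule $V_{\mc H} \subset V$ (as in \eqref{eq:3.24}) and normalizes each $i_c$ so that $i_c|_{V_{\mc H}} = \mathrm{id}$; then the intertwining relation
\[
i_c(N_\omega \cdot v) = c(N_\omega)\cdot i_c(v) = c(\omega)\, N_\omega \cdot v \qquad (v \in V_{\mc H},\ \omega \in A),
\]
together with $V = \sum_\omega N_\omega \cdot V_{\mc H}$, determines $i_c$ completely and visibly gives $i_c \circ i_{c'} = i_{cc'}$. No crossed product $\mc H_\ad \rtimes C$, no Morita equivalence, and no appeal to the module category of $\mc H_\ad \rtimes C$ is needed.

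What your route buys is a conceptual explanation: triviality of the cocycle is a formal consequence of $\mc H \subset \mc H_\ad$ being a $C$-Galois extension in the sense you describe. What the paper's route buys is brevity and explicitness---the operator $i_c$ is written down, which also feeds directly into the decomposition \eqref{eq:3.27} and Lemma~\ref{lem:4.6}. One caution about your argument: the assertion that every irreducible $\mc H_\ad \rtimes C$-module restricts to $\mc H_\ad$ as a \emph{multiplicity-free} sum $\bigoplus_{[c]\in C/C_V} c^*V$ is essentially equivalent to the triviality of $\kappa_V$ you are trying to establish, so it needs its own justification rather than being asserted as a consequence of the Morita equivalence. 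The paper sidesteps this by working with the concrete spanning set $\{N_\omega\cdot v\}$ instead.
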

\begin{proof}
We normalize $i_c$ by requiring that it restricts to the identity on
$V_{\mc H}$. For any $\omega \in A, c \in C_V$ and $v \in V_{\mc H}$ we have
\begin{equation}\label{eq:3.26}
i_c (N_\omega \cdot v) = c(N_\omega) \cdot i_c (v) = c(\omega) N_\omega \cdot v .
\end{equation}
In view of \eqref{eq:3.24}, this formula determines $i_c$ completely. In particular
$i_c \circ i_{c'} = i_{c c'}$ for all $c, c' \in C_V$.
\end{proof} 

In the remainder of this section we assume that $\mc G$ is semisimple, so that 
\eqref{eq:3.9} and $C$ are finite.
By \cite[Theorem A.13]{RaRa} the action from Lemma \ref{lem:4.5} gives rise to 
an isomorphism of $\mc H \times \C [C_V]$-modules
\begin{equation}\label{eq:3.27}
V \cong \bigoplus\nolimits_{E \in \Irr (C_V)} V_E \otimes E .
\end{equation}

\begin{lem}\label{lem:4.6}
For every $E \in \Irr (C_V)$ the $\mc H$-module $V_E = \Hom_{C_V} (E,V)$
is irreducible and appears with multiplicity one in $\eta_{\mc H}^* (V)$.
\end{lem}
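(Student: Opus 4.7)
The plan is to reduce the claim to a standard Clifford-theoretic decomposition and then exploit the commutativity of $C$.

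First, I observe that $C = \Irr (X_{\mf f,\ad}/X_{\mf f})$ is the Pontryagin dual of an abelian group and is therefore itself abelian. Consequently the stabilizer $C_V \subseteq C$ is abelian, and every irreducible representation $E$ of $C_V$ is one-dimensional. This is the crucial input that will force the multiplicity to be exactly one; without it one would only get that each isotypic component of $\eta_{\mc H}^*(V)$ has dimension $(\dim V_E)(\dim E)^2$.

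Next I would verify that the hypotheses of the Clifford-theoretic statement cited as [RaRa, Theorem A.13] are met in our situation. The action of $C$ on $\mc H_\ad$ given by $c \cdot (h \otimes N_\omega) = h \otimes c(\omega) N_\omega$ is free on the natural vector-space basis $\{ h_w \otimes N_\omega \}$ (with $\{h_w\}$ a basis of $\mc H_\af (G,P_{\mf f},\sigma)$), and its fixed subalgebra is exactly $\mc H$, as already recorded just before Lemma \ref{lem:4.5}. Combined with the linear $C_V$-action on $V$ supplied by Lemma \ref{lem:4.5}, this places us in the precise framework of the cited theorem, which asserts that in the decomposition \eqref{eq:3.27} each $V_E$ is an irreducible $\mc H$-module and that the $V_E$ for distinct $E \in \Irr (C_V)$ are pairwise non-isomorphic.

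Finally, using $\dim E = 1$, the decomposition \eqref{eq:3.27} specializes to an isomorphism of $\mc H$-modules
\[
\eta_{\mc H}^* (V) \;\cong\; \bigoplus_{E \in \Irr (C_V)} V_E ,
\]
and together with the pairwise non-isomorphism this yields that each $V_E$ appears with multiplicity exactly one in $\eta_{\mc H}^*(V)$. I expect the main obstacle to be checking that the cited theorem applies verbatim with our conventions; the freeness of the $C$-action is the only nontrivial hypothesis, and it follows directly from the explicit formula $c \cdot N_\omega = c(\omega) N_\omega$ together with the linear independence of the $\{N_\omega\}_{\omega \in A}$ inside $\mc H_\ad$.
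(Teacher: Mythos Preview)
Your argument has a genuine gap. You claim that \cite[Theorem A.13]{RaRa} asserts that each $V_E$ is irreducible, but as the paper records, that theorem only guarantees that $V_E$ is \emph{either zero or irreducible}. Nothing you have written excludes the possibility that $V_E = 0$ for some $E \in \Irr (C_V)$, and in general Clifford-theoretic situations this really can happen: the decomposition \eqref{eq:3.27} is formal, and the content of the lemma is precisely that every isotypic component is nontrivial.

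The paper supplies the missing step by using the explicit description of the $C_V$-action from Lemma \ref{lem:4.5}. Choosing a set $\Omega \subset A$ of representatives for $A / \bigcap_{c \in C_V} \ker (c|_A)$ (so that $\Omega$ is in bijection with $\Irr(C_V)$), one checks from \eqref{eq:3.24} and \eqref{eq:3.26} that the map $a \otimes v \mapsto N_a \cdot v$ gives a linear bijection
\[
\C \Omega \;\otimes\! \sum_{\omega \in \bigcap_{c} \ker (c|_A)} N_\omega \cdot V_{\mc H} \;\longrightarrow\; V .
\]
Since $C_V$ acts on the left factor $\C \Omega$ via its regular representation, every character $E$ of $C_V$ occurs there, and hence $V_E \neq 0$. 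Once this is established, your remaining argument (that $\dim E = 1$ forces multiplicity one) is correct and coincides with the paper's.
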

\begin{proof}
By \cite[Theorem A.13]{RaRa} the $\mc H$-module $V_E$ is either zero or irreducible. 
Let $\Omega \subset A$ be a set of representatives of $A / \cap_{c \in C} \ker (c|_A)$,
so that $\Irr (C_V)$ is naturally in bijection with $\Omega$.
From \eqref{eq:3.24} and \eqref{eq:3.26} we see that there is a linear bijection
\[
\C \Omega \otimes \sum_{\omega \in \cap_{c \in C} \ker (c|_A)} N_\omega \cdot V_{\mc H} 
\to V : a \otimes v \to N_a \cdot v .
\]
Hence every $E \in \Irr (C_V) \cong \Omega$ appears nontrivially in the decomposition 
\eqref{eq:3.27}. The multiplicity of $V_E$ in $V$ is $\dim (E)$, which is one because
$C_V$ is abelian.
\end{proof}

For another irreducible $\mc H_\ad$-module $V'$, \cite[Theorem A.13]{RaRa} shows how
the restrictions to $\mc H$ compare:
\begin{equation}\label{eq:3.28}
\eta_{\mc H}^* (V') \left\{
\begin{array}{ll}
\cong \eta_{\mc H}^* (V) & \text{if } V' \cong c^* V \text{ for some } c \in C \\
\text{has no constituents in common with } \eta_{\mc H}^* (V) & \text{otherwise} .
\end{array}
\right.
\end{equation}
From here on we assume that $V$ is discrete series. Casselman's criterion for discrete series 
representations \cite[Lemma 2.22]{Opd-Sp} entails that $\eta_{\mc H}^* \delta'$ is direct 
sum of finitely many irreducible discrete series representations of $\mc H$. 

Endow $\mc H_\af$ and $\mc H_\ad$ with the trace $\tau'$ so that $\tau' (N_e) = 1$. 
We indicate the formal degree with respect to this renormalized trace by $\fdeg'$. 

\begin{thm}\label{thm:3.4}
Let $\mc G$ be a semisimple $K$-group which splits over an unramified extension.
Let $V$ be an irreducible discrete series representation of $\mc H_\af (G,P_{\mf f},\sigma) 
\rtimes \Omega_{G_\ad,\mf f} / \Omega_{G_\ad,\mf f,\tor}$ and let $\eta_{\mc H}^* V$ 
be its pullback to $\mc H_\af (G,P_{\mf f},\sigma) \rtimes \Omega_{G,\mf f} / 
\Omega_{G,\mf f,\tor}$ via \eqref{eq:3.12} and \eqref{eq:3.10}. Then
\[
\frac{\fdeg' (\eta_{\mc H}^* V)}{\fdeg' (V)} = |C| = \Big[ \frac{\Omega_{G_\ad,\mf f}}{
\Omega_{G_\ad,\mf f,\tor}} : \frac{\Omega_{G,\mf f}}{\Omega_{G,\mf f,\tor}} \Big] .
\]
For any irreducible constituent $V_E$ of $\eta_{\mc H}^* (V)$:
\[
\fdeg' (V_E) = [C : C_V] \, \fdeg' (V) . 
\]
Here $|C_V|$ equals the length of $\eta_{\mc H}^* (V)$.
\end{thm}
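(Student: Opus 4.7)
The plan is to compare the Plancherel formulas for the normalized traces $\tau'$ on $\mc H$ and $\tau_\ad'$ on $\mc H_\ad$. A direct check on basis elements gives $\tau_\ad'|_{\mc H} = \tau'$: both traces equal $1$ on $N_e$ and vanish on every other $N_w$. Moreover, each $c \in C$ acts on $\mc H_\ad$ by an algebra automorphism preserving $\tau_\ad'$, so every $V' \in C \cdot V$ has the same dimension and the same formal degree as $V$.

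For any $h \in \mc H$, applying the Plancherel formula on each side and using the identity $\chi_V|_{\mc H} = \sum_{E \in \Irr(C_V)} \chi_{V_E}$ from Lemma \ref{lem:4.6} yields
\[
\sum_\delta \fdeg'(\delta)\, \chi_\delta(h) + I_{\mc H}(h) = \tau'(h) = \tau_\ad'(h) = \sum_V \fdeg'_\ad(V) \sum_{E \in \Irr(C_V)} \chi_{V_E}(h) + I_{\mc H_\ad}(h),
\]
where $\delta$ runs over discrete series of $\mc H$, $V$ over those of $\mc H_\ad$, and $I_\bullet$ denotes the continuous tempered contribution. Since discrete-series characters are point-masses in the Plancherel measure, they are linearly independent of the continuous spectrum, so individual coefficients of $\chi_\delta$ may be matched on both sides.

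By \eqref{eq:3.28}, the discrete series of $\mc H_\ad$ whose restriction contains a given constituent $V_E$ of $\eta_{\mc H}^* V$ are precisely the $[C:C_V]$ members of the orbit $C \cdot V$, and by Lemma \ref{lem:4.6} each of them contributes $V_E$ with multiplicity one. Matching the coefficient of $\chi_{V_E}$ gives
\[
\fdeg'(V_E) = [C : C_V]\, \fdeg'_\ad(V) .
\]
Summing over $E \in \Irr(C_V)$ and using $|\Irr(C_V)| = |C_V|$ (since $C_V$ is abelian) produces
\[
\fdeg'(\eta_{\mc H}^* V) = |C_V| \cdot [C : C_V]\, \fdeg'_\ad(V) = |C|\, \fdeg'_\ad(V),
\]
and the length assertion is then an immediate restatement of Lemma \ref{lem:4.6}.

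The main potential obstacle is the clean isolation of individual discrete-series coefficients, which relies on orthogonality of the discrete spectrum from the tempered continuum in the Plancherel decomposition. As a backup route, one can work with the minimal central projections $e_V$ in the von Neumann completion, use the identity $\tau_\ad'(e_V) = \dim(V)\,\fdeg'_\ad(V)$, and apply $\tau_\ad' = \tau'$ to the $C$-invariant sum $\sum_{V' \in C \cdot V} e_{V'}$, which lies in the completion of $\mc H$ and decomposes there as $\sum_E e_{V_E}$ by \eqref{eq:3.28} and Lemma \ref{lem:4.4}.
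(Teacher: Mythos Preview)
Your backup route is exactly the paper's proof: form the $C$-invariant idempotent $e_{C,V} := \sum_{c \in C/C_V} c \cdot e_V \in C_r^*(\mc H_\ad)^C = C_r^*(\mc H)$, compute $\tau'(e_{C,V})$ once via the $\mc H_\ad$-decomposition (getting $|C|\,\fdeg'(V)\dim(V_E)$) and once via the $\mc H$-decomposition (getting $|C_V|\dim(V_E)\fdeg'(V_E)$), and compare. So your proposal already contains the paper's argument.

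Your main route via the full Plancherel formula is a legitimate alternative, but one step is glossed over. To match the coefficient of $\chi_{V_E}$ on the two sides you need to know that the continuous tempered part $I_{\mc H_\ad}$, restricted to $\mc H$, contributes nothing to discrete-series characters of $\mc H$; equivalently, that a non-discrete tempered irreducible $\mc H_\ad$-module restricts to a sum of non-discrete tempered $\mc H$-modules. This is true (e.g.\ because $\mr{ind}_{\mc H}^{\mc H_\ad}$ of a discrete series $\mc H$-module is a finite sum of discrete series $\mc H_\ad$-modules, so by Frobenius reciprocity any $\mc H_\ad$-module containing $V_E$ upon restriction must itself be discrete series), but it is a genuine ingredient that deserves an explicit sentence. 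Once that is in place, your coefficient-matching is justified by the uniqueness of the Plancherel decomposition for $\mc H$, and the rest of your computation is correct. Compared to the idempotent approach, your route trades a concrete trace computation for an appeal to the global structure of the Plancherel formula; both reach the same endpoint with similar effort.
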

\begin{proof}
Let $C_r^* (\mc H)$ be the $C^*$-completion of $\mc H$, as in \cite[Definition 2.4]{Opd-Sp}. 
As $V$ is discrete series, we know from \cite[\S 6.4]{Opd-Sp} that $C_r^* (\mc H_\ad)$ 
contains a central idempotent $e_{V}$ such that 
\[
e_{V} C_r^* (\mc H_\ad) \cong \mr{End}_\C (V) .
\]
Then by definition
\begin{equation}\label{eq:3.29}
\tau' (e_{V}) = \dim (V) \fdeg' (V) .
\end{equation}
The $C$-orbit of $V$ in $\Irr (\mc H_\ad)$ has precisely $[C : C_{V}]$ elements,
and these are all discrete series. The central idempotent 
\[
e_{C,V} := \sum\nolimits_{c \in C / C_{V}} c \cdot e_{V} 
\]
lies in $C_r^* (\mc H_\ad)^C = C_r^* (\mc H)$ and
\[
e_{C,V} C_r^* (\mc H_\ad) \cong 
\bigoplus\nolimits_{c \in C / C_{V}} \mr{End}_\C (c^* V) .
\]
Since the action of $C$ preserves $\tau'$, we obtain
\[
\tau' (e_{C,V}) = \sum\nolimits_{c \in C / C_{V}} \dim (c^* V)
\fdeg' (c^* V) = [C : C_{V}] \dim (V) \fdeg' (V) . 
\]
With \eqref{eq:3.27} and Lemma \ref{lem:4.6} this can be expanded as
\begin{equation}\label{eq:3.30}
\begin{aligned}
\tau' (e_{C,V}) & = [C : C_{V}] \, \fdeg' (V) 
\sum\nolimits_{E \in \Irr (C_{V})} \dim (V_E) \dim (E) \\
& = [C : C_{V}] \, \fdeg' (V) |C_{V}| \dim (V_E) = 
|C| \, \fdeg' (V) \dim (V_E) .
\end{aligned}
\end{equation}
From \eqref{eq:3.27} and \eqref{eq:3.28} we see that
\[
e_{C,V} C_r^* (\mc H) \cong \bigoplus\nolimits_{E \in \Irr (C_{V})}
\mr{End}_\C (V_E) .
\]
Considering $\tau'$ as trace for $\mc H$, using Lemma \ref{lem:4.4}
and the commutativity of $C_V$, we find
\begin{equation}\label{eq:3.31}
\tau' (e_{C,V}) = \sum\nolimits_{E \in \Irr (C_{V})} 
\dim (V_E) \fdeg' (V_E) = |C_{V}| \dim (V_E) \fdeg' (V_E) .
\end{equation}
Now we compare \eqref{eq:3.30} and \eqref{eq:3.31}, for any constituent
$V_E$ of $\eta_{\mc H}^* (V)$, and we find the desired formula for $\fdeg' (V_E)$.

From that formula and \eqref{eq:3.27} we deduce:
\begin{multline*}
\!\!\! \fdeg' (\eta_{\mc H}^* V) = \sum_{E \in \Irr (C_{V})} \dim (E) 
\fdeg (V_E) = |C_{V}| [C : C_{V}] \, \fdeg (V) = |C| \, \fdeg (V) . 
\end{multline*}
For the interpretation of $|C_V|$ we refer to Lemma \ref{lem:4.6}.
\end{proof}

We note two direct consequences of Theorem \ref{thm:3.4} and \eqref{eq:3.8}:
\begin{align}
& \frac{\fdeg (\eta_{\mc H}^* V)}{\fdeg (V)} = |C| \frac{|\Omega_{G_\ad,\mf f,\tor}|}{
|\Omega_{G,\mf f,\tor}|} = [\Omega_{G_\ad,\mf f}  : \Omega_{G,\mf f}] , \\
& \label{eq:3.50} \text{length of } \eta_{\mc H}^* (V) \cdot \fdeg (V_E) = 
[\Omega_{G_\ad,\mf f}  : \Omega_{G,\mf f}] \, \fdeg (V) . 
\end{align}

\section{Pullback of representations}
\label{sec:pullback}

We would like to apply Lemma \ref{lem:3.2} to \eqref{eq:3.50},
but to do so we first have to find the relation between the length of $\eta_{\mc H}^* (V)$
and the length of the pullback of the associated $G_\ad$-representation. This involves 
the number of $G$-orbits of facets and the number of Bernstein components obtained from 
$(\hat P_{\mf f,\ad},\hat \sigma_\ad)$ under pullback along $\eta$. 

For a facet $\mf f$ of $\mc B (\mc G,K)$, let $\Rep (G)_{\mf f}$ be the sum of the 
subcategories $\Rep (G)_{(P_{\mf f},\sigma)}$,
where $\sigma$ runs over the irreducible representations of $P_{\mf f}$ inflated from
cuspidal representations of $\overline{\mc G_{\mf f}^\circ}(k)$. In Section 
\ref{sec:background} we saw that this is a direct sum of finitely many Bernstein blocks, 
which by \cite[Corollary 3.10]{Mor2} all come from supercuspidal Bernstein components 
of the same Levi subgroup of $\mc G$. By \eqref{eq:1.11}
\begin{equation}\label{eq:3.19}
\begin{array}{ll}
\Rep (G)_{\mf f} = \Rep (G)_{\mf f'} & 
\text{if } g \mf f' = \mf f \text{ for some } g \in G \\
\Rep (G)_{\mf f} \cap \Rep (G)_{\mf f'} = \{ 0 \} & \text{otherwise.}
\end{array}
\end{equation}
Let $\eta^* (\Rep (G_\ad)_{\mf f})$ the pullback of $\Rep (G_\ad)_{\mf f}$ along
$\eta : G \to G_\ad$.

\begin{lem}\label{lem:3.8}
Let $\mc G$ be a reductive $K$-group which splits over an unramified extension.
The number of different subcategories $\Rep (G)_{\mf f'}$ involved nontrivially in
$\eta^* (\Rep (G_\ad)_{\mf f})$ is 
$|\Omega_{G_\ad} \cdot \mf f| \, |\Omega_G \cdot \mf f|^{-1}$.
\end{lem}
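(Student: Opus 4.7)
The plan is to match the subcategories $\Rep(G)_{\mf f'}$ appearing nontrivially in $\eta^*(\Rep(G_\ad)_{\mf f})$ with the $\Omega_G$-orbits contained in $\Omega_{G_\ad}\cdot\mf f$, and then to count them.

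First I would identify the relevant facets. I claim that $\Rep(G)_{\mf f'}$ appears nontrivially in $\eta^*(\Rep(G_\ad)_{\mf f})$ if and only if $\mf f'$ lies in the $G_\ad$-orbit of $\mf f$ (under the identification of facets of $\mc B(\mc G,K)$ and $\mc B(\mc G_\ad,K)$ used in Section \ref{sec:AHA}). For necessity: $\eta$ maps $P_{\mf f'}\subset G$ into $P_{\mf f',\ad}\subset G_\ad$, so if $\eta^*\pi_\ad|_{P_{\mf f'}}$ contains a cuspidal unipotent representation of $\overline{\mc G_{\mf f'}^\circ}(k)$, it does so by inflation through the natural surjection $\overline{\mc G_{\mf f'}^\circ}(k)\to \overline{\mc G_{\ad,\mf f'}^\circ}(k)$; hence $\pi_\ad|_{P_{\mf f',\ad}}$ contains a cuspidal unipotent component, and \eqref{eq:3.19} applied to $G_\ad$ forces $\mf f'$ to be $G_\ad$-conjugate to $\mf f$. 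For sufficiency: given $\mf f'=g_\ad\mf f$, the compatibility preceding \eqref{eq:3.12} (together with \cite[Lemma 15.7]{FOS}) shows that any cuspidal unipotent $\hat\sigma_\ad'$ on $\hat P_{\mf f',\ad}$ pulls back to a representation of $\hat P_{\mf f'}$ containing a cuspidal unipotent summand $\hat\sigma'$, so any nontrivial object of $\Rep(G_\ad)_{(\hat P_{\mf f',\ad},\hat\sigma_\ad')}$ has nonzero image in $\Rep(G)_{(\hat P_{\mf f'},\hat\sigma')}\subset\Rep(G)_{\mf f'}$.

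By \eqref{eq:3.19} the distinct subcategories $\Rep(G)_{\mf f'}$ that appear correspond bijectively to the $G$-orbits of facets lying in the $G_\ad$-orbit of $\mf f$. Using that $\overline{C_0}$ is a fundamental domain for the affine Weyl group acting on the apartment, and that two facets in $\overline{C_0}$ are $G$-conjugate (resp.\ $G_\ad$-conjugate) precisely when they lie in the same $\Omega_G$-orbit (resp.\ $\Omega_{G_\ad}$-orbit), this count reduces to the number of $\Omega_G$-orbits contained in $\Omega_{G_\ad}\cdot\mf f$.

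Since $\Omega_G$ is a subgroup of the abelian group $\Omega_{G_\ad}$, the stabilizer of $\omega\cdot\mf f$ in $\Omega_G$ coincides with that of $\mf f$ for every $\omega\in\Omega_{G_\ad}$; hence every $\Omega_G$-orbit inside $\Omega_{G_\ad}\cdot\mf f$ has cardinality $|\Omega_G\cdot\mf f|$, and their total number is $|\Omega_{G_\ad}\cdot\mf f|\,|\Omega_G\cdot\mf f|^{-1}$, as desired. The main obstacle is the first step: matching Bernstein structures under $\eta^*$ at the facet level, which rests on the compatibility between cuspidal unipotent representations of $\overline{\mc G_{\mf f'}^\circ}(k)$ and $\overline{\mc G_{\ad,\mf f'}^\circ}(k)$ already underlying \eqref{eq:3.12}.
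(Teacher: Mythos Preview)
Your proposal is correct and follows essentially the same route as the paper: identify the facets $\mf f'$ contributing to $\eta^*(\Rep(G_\ad)_{\mf f})$ as those in the $G_\ad$-orbit of $\mf f$, reduce via \eqref{eq:3.19} and the fundamental domain $\overline{C_0}$ to counting $\Omega_G$-orbits inside $\Omega_{G_\ad}\cdot\mf f$, and use abelianness of $\Omega_{G_\ad}$ to conclude all such orbits have size $|\Omega_G\cdot\mf f|$. The only difference is that you spell out the necessity/sufficiency argument for the first step, whereas the paper dispatches it with ``it is clear from the definitions.''
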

\begin{proof}
Since $\overline{C_0}$ contains a fundamental domain for the $G$-action on $\mc B (\mc G,K)$,
it suffices by \eqref{eq:3.19} to consider facets in $\overline{C_0}$. Since the kernel of 
$\kappa_G : G \to \Omega_G$ acts type-preservingly on $\mc B (\mc G,K)$, the $G$-association classes 
of facets in $\overline{C_0}$ are precisely the $\Omega_G$-orbits of facets in $\overline{C_0}$.

Since $\Omega_G$ embeds in the abelian group $\Omega_{G_\ad}$, all $\Omega_G$-orbits in
$\Omega_{G_\ad} \cdot \mf f$ have the same length. The number of such $\Omega_G$-orbits is
$|\Omega_{G_\ad} \cdot \mf f| \, |\Omega_G \cdot \mf f|^{-1}$.

It is clear from the definitions that $\eta^* (\Rep (G_\ad)_{\mf f})$ has nonzero parts in the 
$\Rep (G)_{\mf f'}$ with $\mf f' \in \Omega_{G_\ad} \cdot \mf f$, and maps to zero in all
other subcategories $\Rep (G)_{\mf f''}$. In view of \eqref{eq:3.19}, the number of different
$\Rep (G)_{\mf f'}$ involved here equals the number of $\Omega_G$-orbits in 
$\Omega_{G_\ad} \cdot \mf f$.
\end{proof}

There exists a Levi subgroup $M = \mc M (K)$ such
that $(\hat{P}_{\mf f} \cap M, \hat \sigma)$ is a type for a supercuspidal Bernstein
block $\mf s_M$ of Rep$(M)$, covered by $(\hat{P}_{\mf f},\hat \sigma)$ 
\cite[Corollary 3.10]{Mor2}. We will often denote objects associated to $M$ with an
additional subscript, e.g. $P_{M,\mf f} = P_{\mf f} \cap M$. We note that, by
\cite[Theorem 2.1]{Mor2}, $\mf f$ is contained in a minimal facet $\mf f_M$ of
$\mc B (\mc M_\ad,K)$ and $P_{M,\mf f} = P_{\mf f_M}$ is a maximal parahoric subgroup of $M$. 

Recall from \cite[\S 1.2]{Tit} that the apartment $\mh A$ of $\mc B (\mc G,K)$ associated to 
$\mc S$ admits a canonical decomposition 
\begin{equation}\label{eq:3.18}
\mh A = \mh A_{M_\ad} \times X_* (Z(\mc M)_s) \otimes_\Z \R,
\end{equation}
where $\mh A_{M_\ad}$ is the apartment of $\mc B (\mc M_\ad ,K) = 
\mc B (\mc M / Z(\mc M)_s ,K)$ associated to $\mc S / Z(\mc M)_s$. With \eqref{eq:3.18} we can 
express $\mf f$ as $\mf f_M \times \mf f^M$, where  $\mf f_M$ is a vertex of $\mc B (\mc M_\ad,K)$ 
and $\mf f^M$ is an open subset of $X_* (Z(\mc M)_s) \otimes_\Z \R$. Now
\begin{equation}\label{eq:3.17}
\overline{\mf f} = \big( \overline{\mf f_M} \times X_* (Z(\mc M)_s) \otimes_\Z \R \big) 
\cap \overline{C_0} ,
\end{equation}
so that $\mf f$ and $\mf f_M$ determine each other.

\begin{lem}\label{lem:3.7} 
Let $\mf f' \in N_{G_\ad}(M) \cdot \mf f$ such that $\Rep (M)_{\mf f_M} \neq
\Rep (M)_{\mf f'_M}$. Then $\Rep (G)_{\mf f} \neq \Rep (G)_{\mf f'}$.
\end{lem}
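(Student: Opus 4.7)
I argue by contrapositive: suppose $\Rep(G)_{\mf f} = \Rep(G)_{\mf f'}$; I will show that $\mf f'_M$ and $\mf f_M$ are $M$-conjugate, which by \eqref{eq:3.19} applied to $M$ is equivalent to $\Rep(M)_{\mf f_M} = \Rep(M)_{\mf f'_M}$. By \eqref{eq:3.19} applied to $G$, the assumption produces $g \in G$ with $g \mf f = \mf f'$; by hypothesis, there is also $n \in N_{G_\ad}(M)$ with $n \mf f = \mf f'$.

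The first step is to force $g \in N_G(M)$. Since $n$ normalizes $\mc M_\ad$, it preserves the apartment splitting \eqref{eq:3.18}, so $\mf f'_M = n \cdot \mf f_M$ is still a vertex of $\mc B(\mc M_\ad, K)$. This means that the Levi canonically associated to $\mf f'$ in the sense of Morris \cite[Theorem 2.1 and Corollary 3.10]{Mor2}---namely the unique Levi of $G$ whose intersection with $P_{\mf f'}$ is a maximal parahoric---equals $M$ itself, not merely a $G$-conjugate. Applied to $g\mf f = \mf f'$, uniqueness of this Levi forces $g M g^{-1} = M$, hence $g \in N_G(M)$.

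Next, both $\eta(g)$ and $n$ lie in $N_{G_\ad}(M_\ad)$ and send $\mf f$ to $\mf f'$, so $n^{-1}\eta(g) \in \hat P_{\mf f, \ad} \cap N_{G_\ad}(M_\ad)$. Using the Iwahori decomposition of $\hat P_{\mf f, \ad}$ relative to a parabolic with Levi $M_\ad$, any element of this parahoric that normalizes $M_\ad$ must have trivial unipotent components, so the intersection equals $\hat P_{\mf f, \ad} \cap M_\ad = \hat P_{\mf f_M, \ad}$, which fixes $\mf f_M$ pointwise. Writing $n = \eta(g) \cdot h^{-1}$ with $h \in \hat P_{\mf f_M, \ad}$ and applying to $\mf f_M$ gives
\[
\mf f'_M = n \cdot \mf f_M = \eta(g) \cdot h^{-1} \cdot \mf f_M = \eta(g) \cdot \mf f_M ,
\]
so $\mf f'_M$ lies in the $\eta(N_G(M))$-orbit of $\mf f_M$ inside $\mc B(\mc M_\ad, K)$.

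Finally I must refine $\eta(N_G(M))$-conjugacy to $M_\ad$-conjugacy of the vertices $\mf f_M, \mf f'_M$. I would invoke the compatibility, set up at the start of Section \ref{sec:AHA}, between the cuspidal unipotent types $\hat\sigma$ on $\hat P_{\mf f}$ and $\hat\sigma_\ad$ on $\hat P_{\mf f, \ad}$ under pullback along $\eta$, together with Morris's covering property \cite[Theorem 4.8]{Mor2}: an element of $N_{G_\ad}(M_\ad)$ that transports the cuspidal unipotent datum at $\mf f_M$ to an equivalent one must coincide, modulo the stabilizer of that datum, with an element of $M_\ad$, since the Weyl contribution $N_{G_\ad}(M_\ad)/M_\ad$ already acts via the $G$-conjugacy on cuspidal supports and is trivialized by the hypothesis $\mf f' \in N_{G_\ad}(M)\cdot\mf f$. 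Hence $\eta(g) \cdot \mf f_M \in M_\ad \cdot \mf f_M$, so $\mf f'_M \in M \cdot \mf f_M$, completing the proof. The main obstacle I expect is this last reduction from $N_G(M)$- to $M$-conjugacy, concentrating all the rigidity of unipotent cuspidal data across the isogeny into one step.
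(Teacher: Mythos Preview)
Your final reduction step is where the argument breaks down. After Steps 4--5 you have (at best) that $\mf f'_M = \eta(g) \cdot \mf f_M$ for some $g \in N_G(M)$, and you need to conclude that $\mf f_M$ and $\mf f'_M$ are $M$-conjugate. But $N_G(M)/M$ can act nontrivially on the set of $M$-orbits of vertices of $\mc B(\mc M_\ad,K)$, so this does not follow. Your appeal to Morris's covering result and the compatibility of the types $\hat\sigma,\hat\sigma_\ad$ does not address the issue: the lemma concerns the categories $\Rep(M)_{\mf f_M}$, which are sums over \emph{all} cuspidal unipotent $\sigma$, and the hypothesis $\mf f' \in N_{G_\ad}(M)\cdot\mf f$ in no way ``trivializes'' the action of $N_{G_\ad}(M_\ad)/M_\ad$---that hypothesis is used only to ensure $\mf f'$ has the same associated Levi $M$. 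There are also smaller issues earlier: in Step 4 the Levi attached to a facet is unique only up to $P_{\mf f'}$-conjugacy, so $g$ must be adjusted by an element of $P_{\mf f'}$; in Step 5, $n^{-1}\eta(g)$ lies only in the \emph{setwise} stabilizer of $\mf f$, not a priori in $\hat P_{\mf f,\ad}$, and your Iwahori-decomposition claim for $N_{G_\ad}(M_\ad)$ (rather than $M_\ad$) is not justified.

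The paper circumvents the $N_G(M)$-versus-$M$ obstruction by a different mechanism. From $\Rep(G)_{\mf f}=\Rep(G)_{\mf f'}$ it produces, via inertial equivalence, $g\in N_G(M)$ and $\chi\in X_\nr(M)$ with $\pi'_M\cong\mr{Ad}(g)^*(\pi_M\otimes\chi)$. The key observation is that $N_G(M)/M$ is isogeny-invariant, so one may replace $g$ by a representative in the image of $G_\Sc\to G$; then $g\in\ker(\kappa_G)$ acts type-preservingly on $\mc B(\mc G,K)$. Writing $\pi_M\otimes\chi$ as an induction from $N_M(P_{\mf f_M})$ and comparing with $\pi'_M\in\Rep(M)_{\mf f'_M}$ yields $m\in M$ with $mg\cdot\mf f_M=\mf f'_M$, hence $mg\cdot\mf f=\mf f'$. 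Since $\mf f,\mf f'\subset\overline{C_0}$ and $\kappa_G(g)=0$, this forces $\kappa_G(m)\cdot\mf f=\mf f'$, and naturality of the Kottwitz homomorphism then gives $\kappa_M(m)\cdot\mf f_M=\mf f'_M$, putting the two vertices in the same $\Omega_M$-orbit---contradicting the hypothesis via \eqref{eq:3.19}. The passage through $G_\Sc$ and the Kottwitz map is precisely the missing idea that converts $N_G(M)$-conjugacy into $M$-conjugacy.
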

\begin{proof}
Suppose that $\Rep (G)_{\mf f} = \Rep (G)_{\mf f'}$. Then any inertial equivalence class
$\mf s = [M,\pi_M]_G$ with $\Rep (G)_{\mf s} \subset \Rep (G)_{\mf f}$ equals an inertial
equivalence class $\mf s'$ with $\Rep (G)_{\mf s'} \subset \Rep (G)_{\mf f'}$. By assumption 
$\mf f'$ also admits a decomposition \eqref{eq:3.17}, with the same $M$. Hence we may 
assume that $\mf s' = [M,\pi'_M]_G$.

This means that there exist $g \in N_G (M)$ and $\chi \in X_\nr (M)$ such that 
$\pi'_M = \mr{Ad}(g)^* (\pi_M \otimes \chi)$. Since $N_G (M)/M$ only depends on $G$ up to 
isogenies, we may assume that $g$ lies in the image of $G_\Sc \to G$. In particular $g$ lies 
in the kernel of $\kappa_G$ and acts type-preservingly on $\mc B (\mc G,K)$. 

By \cite[\S 1]{LusUni1} or \cite[(1.18)]{FOS} we can write 
$\pi_M \otimes \chi = \mr{ind}_{N_M (P_{\mf f_M})}^M (\tilde \sigma)$ for some extension 
$\tilde \sigma$ of $\sigma$ to $N_M (P_{\mf f_M})$. Then
\[
\mr{Ad}(g)^* (\pi_M \otimes \chi) \cong 
\mr{ind}_{g N_M (P_{\mf f_M}) g^{-1}}^M (\mr{Ad}(g)^* \tilde \sigma) =
\mr{ind}_{N_M (P_{g \cdot \mf f_M})}^M (\mr{Ad}(g)^* \tilde \sigma) .
\]
With \eqref{eq:3.19} this implies $m g \cdot \mf f_M = \mf f'_M$ for some $m \in M$. Then 
\eqref{eq:3.17} shows that also $m g \cdot \mf f = \mf f$. Since $\mf f \cup \mf f' \subset
\overline{C_0}$ and $g \in \ker (\kappa_G)$, it follows that
$\kappa_G (m) \mf f = \mf f'$. From \eqref{eq:3.17} and the naturality of the Kottwitz
homomorphism we deduce that $\kappa_M (m) \mf f_M = \mf f'_M$. By \eqref{eq:3.19} this
contradicts the assumption of the lemma.
\end{proof}

We write $\mc M_{AD} = \mc M / Z(\mc G)$ and we restrict $\eta$ to 
$\eta_M : \mc M \to \mc M_{AD}$. Let $\mc P$ be a parabolic $K$-subgroup of $\mc G$ with 
Levi factor $\mc M$ and put $\mc P_{AD} = \mc P / Z(\mc G)$. The normalized parabolic
induction functors form a commutative diagram
\begin{equation}\label{eq:5.1}
\begin{array}{ccc}
\Rep (G_\ad) & \xrightarrow{\eta^*} & \Rep (G) \\
\uparrow I_{P_{AD}}^{G_\ad} & & \uparrow I_P^G \\
\Rep (M_{AD}) & \xrightarrow{\eta_M^*} & \Rep (M) 
\end{array} . 
\end{equation}

\begin{lem}\label{lem:5.1}
Let $\pi_\ad \in \Rep (G_\ad)_{(\hat P_{\ad,\mf f},\hat \sigma_\ad)}$ and let 
$\pi_{\ad,\mc H}$ be the associated module of $\mc H (G_\ad,\hat P_{\ad,\mf f},\hat \sigma_\ad)$.
Then the length of $\eta^* (\pi_\ad) \in \Rep (G)$ equals 
$|\Omega_{G_\ad} \cdot \mf f| \, |\Omega_G \cdot \mf f|^{-1}$ times the length of
$\eta_{\mc H}^* (\pi_{\ad,\mc H}) \in \Rep (\mc H (G,\hat P_{\mf f},\hat \sigma))$.
\end{lem}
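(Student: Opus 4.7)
The plan is to decompose $\eta^*(\pi_\ad)$ according to Bernstein components of $\Rep(G)$, then use the Bushnell--Kutzko equivalence \eqref{eq:1.4} on the block corresponding to $(\hat P_{\mf f},\hat\sigma)$ to match the length with that of $\eta_{\mc H}^*(\pi_{\ad,\mc H})$, and finally use a $G_\ad$-equivariance argument to see that every block contributes equally.

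First, I would locate the support of $\eta^*(\pi_\ad)$ among the subcategories $\Rep(G)_{\mf f'}$. Since $\hat\sigma$ is the pullback of $\hat\sigma_\ad$ along $\eta$, the space $\Hom_{\hat P_{\mf f}}(\hat\sigma,\eta^*(\pi_\ad))$ is nonzero, so the projection of $\eta^*(\pi_\ad)$ onto $\Rep(G)^{\mf s}$ (the single Bernstein block with type $(\hat P_{\mf f},\hat\sigma)$ sitting inside $\Rep(G)_{\mf f}$) is nonzero. For any $g_\ad\in G_\ad$ lifting an element of $\Omega_{G_\ad}$, the same reasoning applied to $(P_{g_\ad\cdot\mf f},\mr{Ad}(g_\ad)^*\hat\sigma)$ gives a further Bernstein block containing a nonzero component of $\eta^*(\pi_\ad)$. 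By Lemma \ref{lem:3.8} and \eqref{eq:3.19} these sweep out exactly $|\Omega_{G_\ad}\cdot\mf f|\,|\Omega_G\cdot\mf f|^{-1}$ distinct subcategories $\Rep(G)_{\mf f'}$, and inside each one only the pullback extension of the cuspidal unipotent representation is relevant, so the number of distinct Bernstein blocks of $G$ involved is exactly $|\Omega_{G_\ad}\cdot\mf f|\,|\Omega_G\cdot\mf f|^{-1}$.

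Next I would argue that all these block projections have the same $G$-length. Since $\pi_\ad$ is a $G_\ad$-representation, $\eta^*(\pi_\ad)$ is invariant (up to equivalence in $\Rep(G)$) under the conjugation action of $G_\ad$, and this action permutes Bernstein blocks of $G$. Choosing lifts in $G_\ad$ of coset representatives of $\Omega_G$ in $\Omega_{G_\ad}$ that take $\mf f$ to the various $\mf f'$, one gets length-preserving bijections between the projections of $\eta^*(\pi_\ad)$ onto the different blocks. Hence the total length of $\eta^*(\pi_\ad)$ is $|\Omega_{G_\ad}\cdot\mf f|\,|\Omega_G\cdot\mf f|^{-1}$ times the length $\ell$ of the projection onto $\Rep(G)^{\mf s}$. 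It remains to identify $\ell$ with the length of $\eta_{\mc H}^*(\pi_{\ad,\mc H})$. By \eqref{eq:1.4} applied to $G$, we have $\ell = \mr{length}\bigl(\Hom_{\hat P_{\mf f}}(\hat\sigma,\eta^*(\pi_\ad))\bigr)$ as an $\mc H(G,\hat P_{\mf f},\hat\sigma)$-module; on the other hand, because $\hat\sigma=\eta^*\hat\sigma_\ad|_{\hat P_{\mf f}}$, this Hom-space is canonically identified with $\Hom_{\hat P_{\ad,\mf f}}(\hat\sigma_\ad,\pi_\ad)=\pi_{\ad,\mc H}$, and the action of $\mc H(G,\hat P_{\mf f},\hat\sigma)$ factors through the inclusion $\eta_{\mc H}$ of \eqref{eq:3.12}. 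Thus $\ell$ equals the length of $\eta_{\mc H}^*(\pi_{\ad,\mc H})$, and multiplying gives the lemma.

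The main obstacle is the middle step: verifying that the lengths in the different Bernstein blocks agree. The difficulty is that the relevant conjugating elements live in $G_\ad$, not in $G$, so one needs to carefully exploit the $G_\ad$-invariance of $\eta^*(\pi_\ad)$ together with the fact that $\Omega_{G_\ad}/\Omega_G$ acts transitively on the $\Omega_G$-orbits inside $\Omega_{G_\ad}\cdot\mf f$. Once this symmetry is in place, the identification with the Hecke module length in Step 3 is essentially formal.
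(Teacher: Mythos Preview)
Your overall strategy matches the paper's: decompose $\eta^*(\pi_\ad)$ over the subcategories $\Rep(G)_{\mf f'}$, use $G_\ad$-conjugation to see that all pieces have equal length, and identify the piece in $\Rep(G)_{\mf f}$ with $\eta_{\mc H}^*(\pi_{\ad,\mc H})$ via \eqref{eq:1.4}. However, there is a genuine gap.

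The assertion ``inside each one only the pullback extension of the cuspidal unipotent representation is relevant'' is precisely the heart of the lemma, and you do not justify it. A priori $\Rep(G)_{\mf f}$ splits as a direct sum of Bernstein blocks indexed by the various extensions of $\sigma$ to $\hat P_{\mf f}$, and nothing you have said excludes $\eta^*(\pi_\ad)$ from meeting several of them. The same issue resurfaces in your Hom-space identification: the map $\eta : \hat P_{\mf f} \to \hat P_{\ad,\mf f}$ need not be surjective (compare \eqref{eq:1.7} and \eqref{eq:3.8}: one can have $\Omega_{G,\mf f,\tor} \subsetneq \Omega_{G_\ad,\mf f,\tor}$), so a $\hat P_{\mf f}$-equivariant map $V_\sigma \to \eta^*(\pi_\ad)$ is a priori only $\eta(\hat P_{\mf f})$-equivariant as a map into $\pi_\ad$, not $\hat P_{\ad,\mf f}$-equivariant. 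Thus the claimed equality $\Hom_{\hat P_{\mf f}}(\hat\sigma,\eta^*(\pi_\ad)) = \Hom_{\hat P_{\ad,\mf f}}(\hat\sigma_\ad,\pi_\ad)$ is not automatic.

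The paper fills this gap by descending to the Levi $M$ via the commutative diagram \eqref{eq:5.1}. It invokes \cite[(7.8)]{SolFunct} to show that on the supercuspidal level $\eta_M^*$ sends $\Rep(M_{AD})_{(\hat P_{\ad,\mf f_M},\hat\sigma_\ad)}$ into exactly one Bernstein component of each $\Rep(M)_{\mf f'_M}$, and then uses Lemma~\ref{lem:3.7} to check that these components remain distinct after parabolic induction to $G$. Only then does one know that the projection of $\eta^*(\pi_\ad)$ to $\Rep(G)_{\mf f}$ already lies in the single block $\Rep(G)_{(\hat P_{\mf f},\hat\sigma)}$, at which point the identification with $\eta_{\mc H}^*(\pi_{\ad,\mc H})$ via \eqref{eq:1.4} becomes legitimate.

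Finally, you misidentify the ``main obstacle''. The equality of lengths across the various $\Rep(G)_{\mf f'}$ is the easy part: $\mr{Ad}(g_\ad)$ for $g_\ad \in G_\ad$ is an automorphism of $G$ that fixes $\eta^*(\pi_\ad)$ and permutes these subcategories, so it carries one projection isomorphically to another. The paper dispatches this in one sentence. The substantive work is exactly the step you skipped.
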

\begin{proof}
In every subcategory $\Rep (G)_{g \mf f}$ with $g \in G_\ad$, $\eta^* (\pi_\ad)$ has a nonzero
component. These components are associated by the automorphisms Ad$(g)$ of $G$, so they all
have the same length. Lemma \ref{lem:3.8} tells us that the number of such components is
$|\Omega_{G_\ad} \cdot \mf f| \, |\Omega_G \cdot \mf f|^{-1}$.

Hence it suffices to consider the projection $\pi_{\mf f}$ of $\eta^* (\pi_\ad)$ to 
$\Rep (G)_{\mf f}$, and we have to show that its length equals that of 
$\eta_{\mc H}^* (\pi_{\ad,\mc H})$. From the commutative diagram \eqref{eq:5.1} we see that
\begin{multline}\label{eq:5.2}
\Rep (G)_{\mf f} \cap \overline{ \eta^* \big(
\Rep (G_\ad)_{(\hat P_{\ad,\mf f},\hat \sigma_\ad)} \big)} \\
= \overline{I_P^G (\Rep (M)_{\mf f_M})} \cap \overline{I_P^G \big( \eta_M^* 
(\Rep (M_{AD})_{(\hat P_{\ad,\mf f_M},\hat \sigma_\ad )} )\big) } ,
\end{multline}
where the $\overline X$ indicates that we take the sum of all Bernstein components appearing
in $X$. It is known from \cite[(7.8)]{SolFunct} that $\eta_M^* \big( \Rep (M_{AD})_{
(\hat P_{\ad,\mf f_M},\hat \sigma_\ad )} \big)$ involves just one Bernstein component of 
$\Rep (M)_{\mf f'_M}$ for every facet $\mf f'_M \in M_{AD} \cdot \mf f$, and no others.
By Lemma \ref{lem:3.7} these different Bernstein components remain different upon 
parabolic induction to $G$. Hence we can identify the right hand side of \eqref{eq:5.2} as
\begin{equation}\label{eq:5.3}
\begin{split}
\overline{I_P^G \big( \text{projection of } \eta_M^* (\Rep (M_{AD})_{(\hat P_{\ad,\mf f_M},
\hat \sigma_\ad )}) \text{ to } \Rep (M)_{\mf f_M} ) \big)} \\
= \overline{I_P^G \big(\Rep (M)_{(\hat P_{\mf f_M},\hat \sigma)}\big)} =
\Rep (G)_{(\hat P_{\mf f},\hat \sigma)}  
\end{split}
\end{equation} 
From \eqref{eq:5.2} and \eqref{eq:5.3} we see that the projection $\pi_{\mf f}$ of 
$\eta^* (\pi_\ad)$ to $\Rep (G)_{\mf f}$ equals its projection to 
$\Rep (G)_{(\hat P_{\mf f},\hat \sigma)}$. The latter category is equivalent with 
$\Rep (\mc H (G,\hat P_{\mf f},\hat \sigma))$, via \eqref{eq:1.4}. Hence $\pi_{\mf f}$ maps
to $\eta_{\mc H}^* (\pi_{\ad,\mc H})$ by \eqref{eq:1.4}, and in particular these two
representations have the same length.
\end{proof}

\section{Computation of formal degrees}

As before, $\mc G$ denotes a connected reductive $K$-group which splits over an 
unramified extension. We will compute the formal degrees of square-integrable
unipotent $G$-representations in increasing generality.

By the choice of a Haar measure on $G$, we make $C_c^\infty (G)$ into a convolution
algebra, denoted $\mc H (G)$. The Plancherel theorem asserts that there exists a unique
Borel measure $\mu_{Pl}$ on $\Irr (G)$ such that
\[
f(1) = \int_{\Irr (G)} \mr{tr} \, \pi (f) \, \textup{d} \mu_{Pl}(\pi) 
\qquad \forall f \in \mc H (G) .
\]
The support of $\mu_{Pl}$ is precisely the collection $\Irr_{\mr{temp}}(G)$ of tempered
irreducible $G$-representations. 
For a selfadjoint idempotent $e \in \mc H (G)$ we write
\[
\Irr (G)^e = \{ (\pi,V_\pi) \in \Irr (G) : e V_\pi \neq 0 \} . 
\]
If it is nonzero, $e V_\pi$ is an irreducible representation of the Hilbert algebra 
$e \mc H (G) e$. Suppose in addition that $\dim (e V_\pi) = d \in \N$ for all 
$\pi \in \Irr (G)^e$. Then \cite[Theorem 2.3 and Proposition 2.1]{BHK} tell us that
\begin{equation}\label{eq:1.8}
\mu_{Pl}( \Irr (G)^e ) = e (1) d^{-1} .
\end{equation}
For an important special case, suppose that $(\sigma, V_\sigma)$ is an irreducible 
representation of a compact open subgroup $J$ of $G$ and that $\mr{ind}_J^G (\sigma)$ 
is irreducible. This forces that $Z(G)$ is compact, that $\mr{ind}_J^G (\sigma)$ is 
supercuspidal and that $e_\sigma \mc H (G) e_\sigma \cong \mr{End}_\C (V_\sigma)$. Applying 
\eqref{eq:1.8} to the associated central idempotent $e_\sigma \in \mc H (J)$, we find
\begin{equation}\label{eq:1.9}
\mu_{Pl} \big( \mr{ind}_J^G (\sigma) \big) = \frac{e_\sigma (1)}{\dim (\sigma)} = 
\frac{\dim (\sigma)}{\mr{vol}(J)} . 
\end{equation}
Recall that a $G$-representation $(\pi ,V_\pi)$ is square-integrable 
modulo centre if $Z(G)$ acts on $V_\pi$ by a unitary character and $V_\pi$ is 
square-integrable as representation of the derived group of $G$. Such a representation
has a $G$-invariant inner product and is completely reducible.
The formal degree of an irreducible square-integrable modulo centre $G$-representation 
is defined as the unique number fdeg$(\pi) \in \R_{>0}$ such that 
\begin{equation}\label{eq:1.2}
\int_{G / Z(G)_s} \langle \pi (g) v_1, v_2 \rangle \overline{\langle \pi (g) v_3, v_4 \rangle}
\, \fdeg (\pi) \textup{d} g = \langle v_1, v_3 \rangle \overline{\langle v_2, v_4 \rangle}  
\quad \text{for all } v_i \in V_\pi.
\end{equation}
When $\pi$ is actually square-integrable (which can happen only if $Z(G)$ is compact),
\eqref{eq:1.2} entails that the formal degree of $\pi$ is its mass with respect 
to the Plancherel measure $\mu_{Pl}$ on $\Irr (G)$ \cite[Proposition 18.8.5]{Dix}. 
The formal degree can be extended to finite length square-integrable modulo centre 
representations by additivity.

The above depends on the choice of a Haar measure, we need to make that explicit.
Fix an additive character $\psi : K \to \C^\times$ of order $-1$, that is, trivial on 
$\varpi_K \mf o_K$ but nontrivial on $\mf o_K$. We endow $G$ (and all other reductive
$p$-adic groups) with the Haar measure as in \cite{HII}. Since $\psi$ has order $-1$, 
this agrees with the Haar measure in \cite{GaGr} and in \cite{DeRe}.

\subsection{Adjoint groups} \
\label{par:adj}

It is known from \cite[Theorems 3.22 and 3.29]{Lus-Che} that the cuspidal unipotent 
representations of $\overline{\mc G_{\mf f}^\circ}(k)$ depend functorially on the finite 
field $k$. This means that $\sigma$ is part of a family of representations $\sigma_{k'}$ 
of $\overline{\mc G_{\mf f}^\circ}(k')$, one for every finite field $k'$ containing $k$.
Moreover, the dimension of $\sigma_{k'}$ is a particular polynomial in $q' = |k'|$, a
product of a rational number without factors $p$, a power of $q'$ and terms 
$(q'^{n} - 1)^{\pm 1}$ with $n \in \N$.

When $K'$ is an unramified extension of $K$, $\overline{\mc G_{\mf f}^\circ}(k')$ is
the finite reductive quotient of $\mc G_{\mf f}^\circ (\mf o_{K'})$, a parahoric subgroup
of $\mc G (K')$. As already remarked after \eqref{eq:1.3}, the volume of 
$\mc G_{\mf f}^\circ (K')$ is a rational function in $q'$, with the same kind of (rational)
factors as dim$(\sigma_{k'})$. This enables one to vary $q$, while keeping $\mf f,
\mc G_{\mf f}^\circ$ and $\sigma$ essentially constant.

A similar variation is possible for the affine Hecke algebra 
\begin{equation}\label{eq:2.1}
\mc H (G,\hat{P}_{\mf f},\hat \sigma ) = \mc H (X_{\mf f}, R_{\mf f}, q^{\mc N}) .
\end{equation}
There it means that the parameter function $q^{\mc N}$ can be replaced by
$q'^{\mc N} = (q^{\mc N})^{[k':k]}$, obtaining a new Hecke algebra
$\mc H (X_{\mf f}, R_{\mf f}, q'^{\mc N})$. Any discrete series representation $\delta$
of \eqref{eq:2.1} naturally gives rise to a discrete series representation $\delta'$
of $\mc H (X_{\mf f}, R_{\mf f}, q'^{\mc N})$ and conversely, see \cite[\S 5.2]{Opd-Sp} 
and \cite[Corollary 4.2.2]{SolAHA}.
In this way one can consider fdeg$(\delta)$ as a function of $q$. 

Further, unramified L-parameters $\phi$ can be made into functions of $q$. 
Replacing $\phi$ by a $G^\vee$-conjugate, we may assume that $\phi (\Fr) = t \Fr$  with
$t \in (T^\vee)^{\mb W_F,\circ}$. For $q' = |k'|$ one takes $\phi'$ with
$\phi' (\Fr) = t^{[k':k]} \Fr$ and $\phi' = \phi$ on $\mb I_K \times SL_2 (\C)$.
It is easily seen from the explicit formulas in \cite[\S 4]{GrRe} that this makes the 
L-functions, $\epsilon$-factors and $\gamma$-factors of $\phi$ into meromorphic
functions of $q$.

\begin{thm}\label{thm:2.1}
Suppose that $\mc G_\ad$ is simple and splits over an unramified extension.
Let $\pi \in \Irr (G_\ad)$ be square-integrable and unipotent.
\enuma{
\item The HII-conjecture \eqref{eq:HII1} holds in this setting, and more precisely
\[
\fdeg (\pi) = \pm \frac{\dim (\sigma) }
{\vol (P_{\mf f}) \, [\hat{P}_{\mf f} : P_{\mf f}] } d_{\mc H,\pi} m (q^{\mc N})^{(r)} =
\pm \frac{\dim (\rho_\pi)}{|S_{\phi_\pi}^\sharp|} \gamma (0,\mr{Ad}_{G^\vee} \circ \phi_\pi, \psi) .
\]
\item The expressions $\gamma (0,\mr{Ad}_{G^\vee} \circ \phi_\pi, \psi)$, $\dim(\sigma)$,
$\vol (P_{\mf f})$ and $m (q^{\mc N})^{(r)}$ are nonzero rational functions of $q$. 
Each of them is a product of a rational constant and factors of the form $q^{m/2}$ 
with $m \in \Z$ and $(q^n - 1)^{\pm 1}$ with $n \in \N$.
}
\end{thm}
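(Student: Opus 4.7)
My plan for part (a) is to combine two ingredients. The right-hand equality is the HII conjecture for square-integrable representations of adjoint simple unramified groups, which is already established in \cite{Opd2,FOS}, so I would simply invoke it. For the middle equality I would transport $\pi$ to an irreducible discrete series module of the affine Hecke algebra $\mc H(G,\hat P_{\mf f},\hat\sigma)$ via the type equivalence \eqref{eq:1.4}, which by \cite[Theorem 3.3.(2)]{BHK} preserves formal degrees under the trace normalization \eqref{eq:1.10}. Applying the residue formula \eqref{eq:1.5}, and using that $\hat\sigma$ acts on the same underlying space as $\sigma$ so $\dim(\hat\sigma)=\dim(\sigma)$, together with $\vol(\hat P_{\mf f}) = [\hat P_{\mf f}:P_{\mf f}]\vol(P_{\mf f})$, the middle expression drops out.

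For part (b) I would verify rationality in $q$ factor by factor, exploiting the variation principle described in the paragraphs preceding the theorem: both $\sigma$ and the Hecke algebra $\mc H(X_{\mf f},R_{\mf f},q^{\mc N})$ are members of $q$-parametrized families. First, $\vol(P_{\mf f})$ has the required shape by \eqref{eq:1.3} combined with the counting formula for $|\overline{\mc G_{\mf f}^\circ}(k)|$ in \cite[Theorem 9.4.10]{Car1}. Next, $\dim(\sigma)$ has the required shape by the classification of cuspidal unipotent representations of finite reductive groups in \cite{Lus-Che}. For $m(q^{\mc N})^{(r)}$, the $\mu$-function $m$ is a rational function of the Hecke parameters $q^{\mc N(s_\alpha)/2}$ and of the central character coordinate $r$, while $r$ itself tracks algebraically with $q$ across the family of Hecke algebras through the discrete-series transfer of \cite[\S 5.2]{Opd-Sp} and \cite[Corollary 4.2.2]{SolAHA}; the constant $d_{\mc H,\pi}\in\Q_{>0}$ from \cite{CiOp} poses no issue. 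Finally, once (a) is known, the required shape of $\gamma(0,\mr{Ad}_{G^\vee}\circ\phi_\pi,\psi)$ is forced by equating the two expressions in (a); alternatively it can be extracted directly from the explicit formulas of \cite{GrRe} for L-functions and $\epsilon$-factors of unramified L-parameters, which are manifestly products of the stated type.

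The subtle point I anticipate is ensuring that, as $q$ varies, the residue $m(q^{\mc N})^{(r)}$ remains nonzero and that the discrete-series central character $r=r(q)$ really traces out a coherent algebraic family, so the residue genuinely becomes a single rational function of $q$ rather than a piecewise expression. This is controlled by Casselman's criterion together with the rigidity of discrete series under variation of Hecke-algebra parameters referenced above, which canonically matches the discrete-series modules of $\mc H(X_{\mf f},R_{\mf f},q^{\mc N})$ with those of its specializations. Non-vanishing of the remaining factors $\dim(\sigma)$, $\vol(P_{\mf f})$ is trivial, and non-vanishing of $\gamma(0,\mr{Ad}_{G^\vee}\circ\phi_\pi,\psi)$ then follows a posteriori from (a) and $\fdeg(\pi)>0$.
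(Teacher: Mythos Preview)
Your approach is essentially the same as the paper's: both reduce part (a) to citing \cite{Opd2} (the middle equality is indeed just \eqref{eq:1.5} rewritten, as you say), and both handle (b) by tracking each factor through the references. Two points you miss, however.

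First, the paper begins with a reduction step: since the objects in the statement are unchanged under Weil restriction (\cite[Theorem A.1 and Lemma A.3]{FOS}), one may assume $\mc G_\ad$ is \emph{absolutely} simple, which is the setting of \cite{Opd2}. You should include this.

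Second, and more substantively: when you say the HII conjecture for simple adjoint groups ``is already established in \cite{Opd2,FOS}'', you gloss over the fact that the argument in \cite{Opd2} relied on an unproven identity, namely the expression \cite[(38)]{Opd2} for $\gamma(0,\mr{Ad}_{G^\vee}\circ\phi_\pi,\psi)$ in terms of the $\mu$-function of the Iwahori--Hecke algebra. The paper supplies the missing proof in its appendix (Theorem \ref{thm:A.2}), and only \emph{then} can the classification-based results of \cite{Opd2} be invoked. So your citation of \cite{Opd2} for the right-hand equality is incomplete without also pointing to Theorem \ref{thm:A.2}. This is not a flaw in your strategy, but it is a genuine logical dependency that your write-up omits.

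For (b), your factor-by-factor analysis is more explicit than the paper's (which simply cites \cite[Proposition 2.5 and (38)]{Opd2} and \cite[\S 4.2]{GrRe}), but it is correct and covers the same ground.
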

\begin{proof}
By \cite[Theorem A.1 and Lemma A.3]{FOS}, the objects in Theorem 2.1 do not change under
Weil restriction for reductive groups, with respect to finite unramified extensions.
Hence we may assume that $\mc G_\ad$ is, in addition, absolutely simple. A first expression
for $\gamma (0,\mr{Ad}_{G^\vee} \circ \phi_\pi, \psi)$ was given in \cite[(38)]{Opd2}, we provide
the proof of that in the appendix (Theorem \ref{thm:A.2}). Then we can use the results from 
\cite{Opd2}, which were obtained by classification.

For (a) see \cite[Theorem 4.11]{Opd2}. For (b) see \cite[Proposition 2.5 and (38)]{Opd2}
and \cite[\S 4.2]{GrRe}.
\end{proof}

Exactly the same argument as for \cite[Proposition 12.2]{FOS} extends 
Theorem \ref{thm:2.1} to all adjoint groups.

\begin{cor}\label{cor:2.2}
Suppose that $\mc G_\ad$ is adjoint and splits over an unramified extension of $K$.
Then \eqref{eq:HII1} holds for all irreducible square-integrable unipotent $G$-representations.

The functions $\gamma (0,\mr{Ad}_{G^\vee} \circ \phi_\pi, \psi)$, $\dim(\sigma)$,
$\vol (P_{\mf f})$ and $m(q^{\mc N})^{(r)}$ for $G_\ad$ are the products of the corresponding
functions for the $K$-simple factors of $\mc G_\ad$.
\end{cor}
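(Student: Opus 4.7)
The plan is to reduce Corollary \ref{cor:2.2} to the absolutely simple case handled in Theorem \ref{thm:2.1}, via two structural steps that are compatible with every quantity appearing in \eqref{eq:HII1}.

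First, since $\mc G_\ad$ is adjoint and splits over an unramified extension of $K$, it decomposes canonically as a finite direct product $\mc G_\ad = \prod_i \mc G_{\ad,i}$, where each factor $\mc G_{\ad,i}$ is a $K$-simple adjoint group, and each such $K$-simple factor is of the form $\mr{Res}_{L_i/K}(\mc H_i)$ for some finite unramified extension $L_i/K$ and some absolutely simple adjoint $L_i$-group $\mc H_i$. By the compatibility of the local Langlands correspondence with direct products (Theorem \ref{thm:1.1}(a)) and with restriction of scalars (Theorem \ref{thm:1.1}(j)), any irreducible square-integrable unipotent $G_\ad$-representation $\pi$ decomposes as an external tensor product $\pi \cong \boxtimes_i \pi_i$, where each $\pi_i$ is an irreducible square-integrable unipotent representation of $\mc H_i(L_i)$, and the enhanced L-parameter $(\phi_\pi,\rho_\pi)$ factors correspondingly.

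Second, I would check that every quantity on both sides of \eqref{eq:HII1} is multiplicative under direct products and behaves correctly under Weil restriction along unramified extensions. On the analytic side, the Haar measure on $G_\ad$ is the product of the chosen Haar measures on the $G_{\ad,i}$, so $\fdeg$ factors; moreover $\vol(P_{\mf f})$, $\dim(\sigma)$ and $m(q^{\mc N})^{(r)}$ each split according to the factorization of the Hecke algebra, since the root datum and parameter function of $\mc H(G,\hat P_{\mf f},\hat \sigma)$ decompose as a direct sum. On the Galois side, $\mr{Ad}_{G^\vee}$ decomposes as a direct sum of the corresponding adjoint representations for the factors, hence $\gamma(0,\mr{Ad}_{G^\vee}\circ\phi_\pi,\psi)$ is a product of the individual $\gamma$-factors; the component group $\mc A_{\phi_\pi}$ and the finite group $S_{\phi_\pi}^\sharp$ factor as products, so $\dim(\rho_\pi)$ and $|S_{\phi_\pi}^\sharp|$ factor as well. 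For the Weil restriction step, the key point is that the Langlands dual of $\mr{Res}_{L_i/K}(\mc H_i)$ is an induced group, and the adjoint representation pulls back accordingly; the resulting invariance of both sides of the HII formula is precisely what is packaged in \cite[Theorem A.1 and Lemma A.3]{FOS}, which was already invoked in the proof of Theorem \ref{thm:2.1}.

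Combining the two reductions, both sides of \eqref{eq:HII1} for $\pi$ split as products indexed by $i$, and the corresponding factors agree by Theorem \ref{thm:2.1} applied to each $\mc H_i$. The asserted factorization of $\gamma(0,\mr{Ad}_{G^\vee}\circ\phi_\pi,\psi)$, $\dim(\sigma)$, $\vol(P_{\mf f})$ and $m(q^{\mc N})^{(r)}$ is then an automatic byproduct of the calculation. The main obstacle, and essentially the only nontrivial verification, is bookkeeping the simultaneous compatibility with Weil restriction on the group side, the Hecke-algebra side and the Galois side; since this has been packaged in \cite[Theorem A.1, Lemma A.3]{FOS}, the argument of \cite[Proposition 12.2]{FOS} applies verbatim and yields the corollary.
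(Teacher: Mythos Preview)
Your proposal is correct and follows essentially the same approach as the paper, which simply points to \cite[Proposition 12.2]{FOS} for the reduction from adjoint to $K$-simple adjoint groups. You have written out in more detail the multiplicativity check that underlies that reference; note only that the Weil restriction step is already absorbed into Theorem \ref{thm:2.1} itself (which is stated for $K$-simple, not absolutely simple, groups), so strictly speaking only the direct-product reduction is needed here.
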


\subsection{Semisimple groups} \
\label{par:ss}

We note that by \cite[Propositions 2.2 and 2.7]{Tad} every (square-integrable) 
irreducible representation of $G$ appears in the pullback of an irreducible 
(square-integrable) representation of $G_\ad$. The constructions from
\cite{Tad} can be performed entirely in categories of unipotent representations
(for $G$ and $G_\ad$). Therefore the next result applies to all square-integrable
unipotent $G$-representations.

\begin{thm}\label{thm:3.10}
Let $\delta \in \Irr_\unip (G)$ and $\delta_\ad \in \Irr_\unip (G_\ad)$ be 
square-integrable, such that $\delta$ is a constituent of $\eta^* (\delta_\ad)$.
\enuma{
\item Their formal degrees, normalized as in \cite{HII}, are related as
\[
\frac{\fdeg (\eta^* (\delta_\ad))}{\fdeg (\delta_\ad)} =
\frac{\fdeg (\delta) \cdot \mathrm{length \: of} \, \eta (\delta_\ad)}{\fdeg (\delta_\ad)} = 
\frac{|\Omega_{G_\ad}|}{|\Omega_G|} .
\]
\item $\fdeg (\delta) = \pm \dim (\rho_\delta) \, \big| S_{\phi_\delta}^\sharp \big|^{-1}
\gamma (0,\mr{Ad}_{G^\vee} \circ \phi_\delta, \psi )$.
}
\end{thm}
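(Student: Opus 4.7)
My plan is to execute the argument in two stages, leveraging the affine--Hecke--algebra analysis of Section \ref{sec:AHA}, Corollary \ref{cor:2.2} for adjoint groups, and the functoriality results of Lemma \ref{lem:3.2}.

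For part (a) I would transfer the problem to the Hecke-algebra side via the Bushnell--Kutzko equivalence \eqref{eq:1.4}, which preserves formal degrees with the trace normalization \eqref{eq:1.10}. Let $V$ and $V_E$ be the Hecke-algebra modules corresponding to $\delta_\ad$ and $\delta$, respectively. Lemma \ref{lem:5.1} compares the $G$-length of $\eta^*\delta_\ad$ with the $\mc H$-length of $\eta_{\mc H}^*V$ via the factor $|\Omega_{G_\ad}\cdot\mf f|/|\Omega_G\cdot\mf f|$, while \eqref{eq:3.50} yields
\[
\mathrm{length}(\eta_{\mc H}^*V)\cdot\fdeg(V_E)=[\Omega_{G_\ad,\mf f}:\Omega_{G,\mf f}]\,\fdeg(V).
\]
Combining these two identities with the orbit--stabilizer relations $|\Omega_{G_\ad}|=|\Omega_{G_\ad}\cdot\mf f|\,|\Omega_{G_\ad,\mf f}|$ and $|\Omega_G|=|\Omega_G\cdot\mf f|\,|\Omega_{G,\mf f}|$ (for the actions of $\Omega_G$ and $\Omega_{G_\ad}$ on $\overline{C_0}$), and invoking Lemma \ref{lem:3.2}(a) so that every constituent of $\eta^*\delta_\ad$ has one common formal degree, delivers part (a).

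For part (b) I would substitute Corollary \ref{cor:2.2} for $\fdeg(\delta_\ad)$ into the relation from part (a). Since $G$ is semisimple one has $\mr{Lie}(G_\ad^\vee)=\mr{Lie}(G^\vee)$, so the adjoint $\gamma$-factors coincide:
\[
\gamma(0,\mr{Ad}_{G_\ad^\vee}\circ\phi_{\delta_\ad},\psi)=\gamma(0,\mr{Ad}_{G^\vee}\circ\phi_\delta,\psi),
\]
where $\phi_\delta={}^L\eta\circ\phi_{\delta_\ad}$ by Theorem \ref{thm:1.1}(k). Substituting Lemma \ref{lem:3.2}(c) for the length and clearing common factors, the target formula reduces to the group-theoretic identity
\[
\frac{|\Omega_{G_\ad}|}{|\Omega_G|} = \frac{|S_{\phi_{\delta_\ad}}^\sharp|}{|S_{\phi_\delta}^\sharp|}\cdot[\mc A_{\phi_\delta}:\mc A_{\phi_{\delta_\ad}}].
\]

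The hard part will be this last identity. The strategy is to compute $|\mc A_\phi|$ via the description $Z^1_{G^\vee_\Sc}(\phi)=Z_{G^\vee_\Sc}(\phi)\cdot Z(G^\vee_\Sc)$, obtaining the formula $|\mc A_\phi|/|S_\phi^\sharp|=|F|\cdot|Z(G^\vee)|/|Z(G^\vee)^{\mb W_K}|$ for semisimple $G$ and discrete $\phi$, where $F=\ker(G^\vee_\Sc\to G^\vee)$. Applying the analogous formula to $G_\ad$ (whose $F$ is trivial because $G_\ad^\vee$ is simply connected), taking the ratio, and using $|Z(G^\vee_\Sc)|=|F|\cdot|Z(G^\vee)|$, one obtains
\[
[\mc A_{\phi_\delta}:\mc A_{\phi_{\delta_\ad}}]\cdot\frac{|S_{\phi_{\delta_\ad}}^\sharp|}{|S_{\phi_\delta}^\sharp|}=\frac{|Z(G^\vee_\Sc)^{\mb W_K}|}{|Z(G^\vee)^{\mb W_K}|};
\]
by \eqref{eq:1.6} and Pontryagin duality of finite $\mb W_K$-modules the right-hand side equals $|\Omega_{G_\ad}|/|\Omega_G|$, closing the argument. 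The careful bookkeeping of $\mb W_K$-invariants and of the component groups under isogeny is supplied by the functoriality framework of \cite{SolFunct} underlying Theorem \ref{thm:3.1} and Lemma \ref{lem:3.2}.
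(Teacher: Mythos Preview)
Your proposal is correct and follows essentially the same route as the paper. Part~(a) is proved in the paper exactly as you outline, combining \eqref{eq:3.50}, Lemma~\ref{lem:5.1}, Lemma~\ref{lem:3.2}(a), and the orbit--stabilizer relations to arrive at \eqref{eq:5.4}--\eqref{eq:5.5}. For part~(b) the paper likewise substitutes Corollary~\ref{cor:2.2}, uses Lemma~\ref{lem:3.2}(c) for the length, and identifies the $\gamma$-factors via $\mr{Lie}(G^\vee_\Sc)=\mr{Lie}(G^\vee)$; the only difference is that where you compute the identity
\[
[\mc A_{\phi_\delta}:\mc A_{\phi_{\delta_\ad}}]\cdot\frac{|S_{\phi_{\delta_\ad}}^\sharp|}{|S_{\phi_\delta}^\sharp|}=\frac{|Z(G^\vee_\Sc)^{\mb W_K}|}{|Z(G^\vee)^{\mb W_K}|}
\]
directly from the description $Z^1_{G^\vee_\Sc}(\phi)=Z_{G^\vee_\Sc}(\phi)\cdot Z(G^\vee_\Sc)$, the paper simply quotes this as \cite[(13.6)]{FOS} (equation~\eqref{eq:3.23}). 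Your derivation of that identity is sound and amounts to reproving the cited fact.
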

\begin{proof}
(a) The first equality sign is a consequence of Lemma \ref{lem:3.2}.a.

Write $V = \Hom_{\hat P_{\mf f,\ad}} (\hat \sigma_\ad, \delta_\ad)$ and 
$V_E = \Hom_{\hat P_{\mf f,\ad}} (\hat \sigma_\ad, \delta)$. Recall from \cite{BHK} 
that $\delta$ and $V_E$ have the same formal degree, and similarly for $\delta_\ad$ and $V$.
With \eqref{eq:3.50} and Lemma \ref{lem:5.1} we compute
\begin{equation}\label{eq:5.4}
\frac{\fdeg (\delta)}{\fdeg (\delta_\ad)} = \frac{\fdeg (V_E)}{\fdeg (V)} = 
\frac{[\Omega_{G_\ad,\mf f}  : \Omega_{G,\mf f}]}{\text{ length of } \eta_{\mc H}^* (V)} = 
\frac{|\Omega_{G_\ad,\mf f}| |\Omega_{G_\ad} \cdot \mf f|}{ 
|\Omega_{G,\mf f}| |\Omega_G \cdot \mf f| \text{ length of } \eta^* (\delta_\ad)} .
\end{equation}
By the orbit counting lemma and \eqref{eq:1.6} this equals
\begin{equation}\label{eq:5.5} 
\frac{|\Omega_{G_\ad}|}{|\Omega_G| \text{ length of } \eta^* (\delta_\ad)} 
= \frac{|Z ({G^\vee}_\Sc)^{\mb W_K}|}{|Z(G^\vee)^{\mb W_K}| 
\text{ length of } \eta^* (\delta_\ad)} .
\end{equation}
Rearranging \eqref{eq:5.4} and \eqref{eq:5.5} yields the desired equality.\\
(b) By Theorem \ref{thm:3.1} $\phi_\delta$ is the composition of $\phi_{\delta_\ad}$ with
the quotient map 
\[
{}^L G_\ad = {G^\vee}_\Sc \rtimes \mb W_K \to G^\vee \rtimes \mb W_K = {}^L G .
\]
From Lemma \ref{lem:3.2}.c we know that $\eta^* (\delta_\ad)$ is the direct sum of exactly
\begin{equation}\label{eq:3.22}
\dim (\rho_{\phi_\ad}) [\mc A_{\phi_\delta} : \mc A_{\phi_{\delta_\ad}}] \dim (\rho_\delta)^{-1}
\end{equation}
irreducible $G$-representations. It is known from \cite[(13.6)]{FOS} that 
\begin{equation}\label{eq:3.23}
\big[ \mc A_{\phi_\delta} : \mc A_{\phi_{\delta_\ad}} \big] = \big[ Z_{{G^\vee}_\Sc}(\phi_\delta) : 
Z_{{G^\vee}_\Sc}(\phi_{\delta_\ad}) \big] = \Big[ S_{\phi_\delta}^\sharp : 
S_{\phi_{\delta_\ad}}^\sharp \Big] \frac{|Z({G^\vee}_\Sc)^{\mb W_K}|}{|Z(G^\vee)^{\mb W_K}|} .
\end{equation}
From \eqref{eq:5.4}, \eqref{eq:5.5}, \eqref{eq:3.22} and \eqref{eq:3.23} we deduce
\begin{equation}\label{eq:5.6}
\frac{\fdeg (\delta)}{\fdeg (\delta_\ad)} = 
\frac{|Z({G^\vee}_\Sc)^{\mb W_K}|}{|Z(G^\vee)^{\mb W_K}|} \frac{\big| \mc A_{\phi_{\delta_\ad}} 
\big| \dim (\rho_\delta)}{|\mc A_{\phi_\delta}| \dim (\rho_{\phi_\delta})} 
= \frac{\big|S_{\phi_{\delta_\ad}}^\sharp \big| \, \dim (\rho_\delta)}{| S_{\phi_\delta}^\sharp |
\, \dim (\rho_{\delta_\ad})} .
\end{equation}
As Lie$({G^\vee}_\Sc) = \mr{Lie}(G^\vee)$, 
\[
\gamma (s,\mr{Ad}_{G^\vee} \circ \phi_\delta, \psi) = \gamma (s,\mr{Ad}_{{G^\vee}_\Sc} \circ
\phi_{\delta_\ad},\psi) \qquad \text{for all } s \in \C .
\]
Then \eqref{eq:5.6} says
\[
\frac{\fdeg (\delta)}{\fdeg (\delta_\ad)} = \frac{\dim (\rho_\delta) \, 
|S_{\phi_{\delta_\ad}}^\sharp|}{\dim (\rho_{\delta_\ad}) \, | S_{\phi_\delta}^\sharp |} 
\frac{\gamma (0,\mr{Ad}_{G^\vee} \circ \phi_\delta, \psi)}{
\gamma (0,\mr{Ad}_{{G^\vee}_\Sc} \circ \phi_{\delta_\ad},\psi)} . 
\]
Combining that with Theorem \ref{thm:2.1}.a, we obtain the desired formula for $\fdeg (\delta)$.
\end{proof}

\subsection{Reductive groups} \
\label{par:red}

To prove the HII conjecture for unipotent representations of a reductive group $G$,
we want to compare its representations with those of $G / Z(G)_s$ and with those of
the derived group $G_\der := \mc G_\der (K)$. (Notice that $G_\der$ may be larger than 
the derived group of $G$ as an abstract group.)

We start with some preparations for the case that $Z(\mc G)^\circ$ is $K$-anisotropic.
Let $\overline{Z(\mc G)^\circ}$ be the connected reductive $k$-group associated by
Bruhat--Tits to the unique vertex of $\mc B (Z(\mc G)^\circ,K)$.
 
For any Levi $K$-subgroup $\mc M$ of $\mc G$, $\mc M_d := \mc M \cap \mc G_\der$
is a Levi $K$-subgroup of $\mc G_\der$. Furthermore $Z(\mc M)_s \subset \mc M_d$,
for example because Lie$(Z(\mc M)_s) \subset \mr{Lie}(\mc G_\der)$. We note also that
$\mc M_d / Z(\mc M)_s$ is the derived group of $\mc M / Z(\mc M)_s$.

\begin{lem}\label{lem:4.1}
Suppose that $Z(\mc G)^\circ$ is $K$-anisotropic. The inclusion $M_d \to M$ induces:
\enuma{
\item a bijection $\Irr_{\unip,\cusp}(M) \to \Irr_{\unip,\cusp}(M_d)$;
\item for every minimal facet $\mf f$ of $\mc B (\mc M_\ad,K)$, a bijection
between the types $(\hat{P}_{\mf f},\hat \sigma)$ for $M$ and for $M_d$.
}
\end{lem}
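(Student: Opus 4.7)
The plan is to establish (b) first, at the level of finite reductive groups, and then deduce (a) from (b) using the Moy--Prasad--Morris classification of supercuspidal unipotent representations as compactly induced from types at minimal facets. A useful preliminary observation is that, since $Z(\mc M)_s \subset \mc M_d$ holds in any case, the buildings $\mc B(\mc M, K)$, $\mc B(\mc M_d, K)$ and $\mc B(\mc M_\ad, K)$ share the same underlying simplicial complex; a minimal facet $\mf f$ of $\mc B(\mc M_\ad, K)$ thus corresponds uniquely to minimal facets on both sides. Furthermore the Bruhat--Tits $\mf o_K$-group schemes satisfy $\mc M_{d,\mf f}^\circ \subset \mc M_{\mf f}^\circ$, with quotient the central $\mf o_K$-torus arising from $\mc M / \mc M_d$, which is $K$-anisotropic under the hypothesis on $Z(\mc G)^\circ$. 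Writing $\bar H = \overline{\mc M_{\mf f}^\circ}$ and $\bar H_d = \overline{\mc M_{d,\mf f}^\circ}$ for the maximal reductive quotients over $k$, these have equal derived subgroups and a central anisotropic quotient $\bar H / \bar H_d$.

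For part (b), the central input is the following fact from Deligne--Lusztig theory: restriction from $\bar H(k)$ to $\bar H_d(k)$ induces a bijection between cuspidal unipotent irreducibles on the two sides. This is because unipotent representations of a connected reductive group over $k$ are trivial on the connected center, so they are determined by their pullback to the derived subgroup (and cuspidality is evidently preserved). Inflating through $P_{\mf f}^M \to \bar H(k)$ and $P_{\mf f}^{M_d} \to \bar H_d(k)$ gives the bijection at the parahoric level. To extend from $P_{\mf f}$ to $\hat P_{\mf f}$, I would use \eqref{eq:1.7}, which identifies $\hat P_{\mf f}/P_{\mf f} \cong \Omega_{M,\mf f,\mr{tor}}$ and analogously for $M_d$; under the anisotropy hypothesis the central torus $\mc M / \mc M_d$ has trivial Kottwitz image, so these component groups (and hence the extension problems) agree, giving the bijection of types in (b).

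For part (a), one uses that every irreducible supercuspidal unipotent representation of $M$ is of the form $\mr{ind}_{N_M(\hat \sigma)}^M (\tilde{\hat \sigma})$, where $(\hat P_{\mf f}, \hat \sigma)$ is a type at a minimal facet $\mf f$ and $\tilde{\hat \sigma}$ is a suitable extension to the $M$-normalizer of $\hat \sigma$; two such data yield the same representation if and only if they are $M$-conjugate (see \cite{Mor1,Mor2}). The same description applies to $M_d$. Combining the type bijection from (b) with the observation that $M$-conjugacy and $M_d$-conjugacy agree on these data (because $M/M_d$ acts trivially on the common building, and on $\hat \sigma$ only through a central character that is absorbed into the extension) yields (a). The main obstacle is the finite-group-theoretic bijection used in (b): in general the restriction of an irreducible representation to a normal subgroup can decompose, but for cuspidal unipotent representations the triviality of their central character on the connected center of $\bar H$ ensures that the restriction to $\bar H_d(k)$ remains irreducible and exhausts the cuspidal unipotents there.
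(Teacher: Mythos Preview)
Your argument for (b) is essentially the paper's: identify the buildings, use that cuspidal unipotent representations of the finite reductive quotients are insensitive to the connected centre (the paper phrases this as invariance under isogenies, citing \cite{Lus-Che}), and then match the component groups $\hat P_{\mf f}/P_{\mf f}\cong\Omega_{?,\mf f,\tor}$ on the two sides. For that last step the paper passes through $M/Z(M)_s$, invoking \cite[Lemma 15.2]{FOS} together with \eqref{eq:1.12} to get $\Omega_M=\Omega_{M_d}$; your direct argument via the vanishing of $\Omega_{M/M_d}$ (an anisotropic unramified torus, hence $X_*(\mc M/\mc M_d)^{\Fr}=0$, whence the exact sequence of $\pi_1$'s gives $\Omega_{M_d}\xrightarrow{\sim}\Omega_M$) reaches the same conclusion a little more directly.

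Where you genuinely diverge is part (a): the paper does \emph{not} deduce it from (b). Instead it writes $\Irr_{\unip,\cusp}(M)$ as the fibred product $\Irr_{\unip,\cusp}(M/Z(M)_s)\times_{X_\Wr(M/Z(M)_s)}X_\Wr(M)$ (and likewise for $M_d$), invokes \cite[Lemma 15.3]{FOS} for the bijection $\Irr_{\unip,\cusp}(M/Z(M)_s)\to\Irr_{\unip,\cusp}(M_d/Z(M)_s)$, and matches the $X_\Wr$ factors via $X_\Wr(M)/X_\Wr(M/Z(M)_s)\cong X_\Wr(Z(M)_s)\cong X_\Wr(M_d)/X_\Wr(M_d/Z(M)_s)$. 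Your compact-induction route is viable, but to complete it you must also match the extensions of $\hat\sigma$ to the full normalisers $N_M(P_{\mf f})$ and $N_{M_d}(P_{\mf f})$ (this again comes down to $\Omega_M=\Omega_{M_d}$) and then check, e.g.\ by Mackey, that the resulting bijection of induced representations really is the one given by restriction along $M_d\hookrightarrow M$. The paper's fibred-product approach sidesteps both of these verifications.
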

\begin{proof}
(a) Let $X_\Wr(M)$ be the group of weakly unramified characters, i.e. those characters 
$M \to \C^\times$ that are trivial on the kernel of the Kottwitz homomorphism $\kappa_M$.
From the short exact sequence \eqref{eq:1.1} (for $\mc M$ and for $\mc M_d$) we deduce
that there are natural isomorphisms
\begin{equation}\label{eq:4.4}
X_\Wr (M) / X_\Wr (M / Z(M)_s) \cong X_\Wr (Z(M)_s) \cong X_\Wr (M_d) / X_\Wr (M_d / Z(M)_s) .
\end{equation}
By \cite[(15.6)]{FOS} every irreducible cuspidal unipotent $M$-representation is
of the form $\pi_{M / Z(M)_s} \otimes \chi_M$ with $\pi_{M / Z(M)_s} \in 
\Irr_{\unip,\cusp}(M / Z(M)_s)$ and $\chi_M \in X_\nr (M)$. Using weakly unramified characters,
we can formulate this more precisely as a bijection
\begin{equation}\label{eq:4.1}
\Irr_{\unip,\cusp} (M / Z(M)_s) \underset{X_\Wr (M / Z(M)_s)}{\times} X_\Wr (M)
\to \Irr_{\unip,\cusp}(M) .
\end{equation}
Similarly there is a bijection
\begin{equation}\label{eq:4.2}
\Irr_{\unip,\cusp} (M_d / Z(M)_s) \underset{X_\Wr (M_d / Z(M)_s)}{\times} 
X_\Wr (M_d) \to \Irr_{\unip,\cusp}(M_d) .
\end{equation}
We note that $Z(\mc M / Z (\mc M)_s)^\circ$ is isogenous to $Z(\mc G)^\circ$, and in
particular it is $K$-anisotropic. Hence we may apply \cite[Lemma 15.3]{FOS}, which tells
us that the inclusion $M_d / Z(M)_s \to M / Z(M)_s$ induces a bijection
\begin{equation}\label{eq:4.3}
\Irr_{\unip,\cusp} (M / Z(M)_s)  \to \Irr_{\unip,\cusp} (M_d / Z(M)_s) .
\end{equation}
Combining \eqref{eq:4.3} and \eqref{eq:4.4} with \eqref{eq:4.1} and \eqref{eq:4.2},
we obtain the required bijection.\\
(b) The (semisimple) Bruhat--Tits buildings of $M, M_d ,M / Z(M)_s$ and $M_d /Z(M)_s$
can be identified \cite[\S 2]{Tit}. In particular these buildings have the same collections
of facets $\mf f$. The group $\overline{\mc M_{\mf f}^\circ}$ is isogenous to the direct
product of $\overline{\mc M_{d,\mf f}^\circ}$ and the $k$-torus 
$\overline{Z(\mc G)^\circ}$. The only cuspidal unipotent representation of 
$\overline{Z(\mc G)^\circ}(k)$ is the trivial representation. The collection of cuspidal
unipotent representations of $\big( \overline{\mc M_{d,\mf f}^\circ} \times 
\overline{Z(\mc G)^\circ} \big) (k)$ does not change under isogenies of $k$-groups
\cite[\S 3]{Lus-Che}, so it is the same as for $\overline{\mc M_{\mf f}^\circ}(k)$. 
As the semisimple group $\mc M_d / Z(\mc M)_s$ is the derived group of $\mc M / Z(\mc M)_s$, 
\cite[Lemma 15.2]{FOS} says that $\Omega_{M / Z(M)_s} = \Omega_{M_d / Z(M)_s}$.
Combining that with \eqref{eq:1.12}, we find that 
\begin{equation}\label{eq:4.5}
\Omega_M = \Omega_{M_d} .
\end{equation}
With \eqref{eq:1.7} we deduce that $M_d \to M$ induces a bijection between the indicated
collections of types.
\end{proof}

The behaviour of formal degrees of supercuspidal unipotent representations under pullback
from $M$ to $M_d$ was analysed in \cite[(16.13)]{FOS}. That and Lemma \ref{lem:4.1} can be 
generalized to all (square-integrable) unipotent representations:

\begin{lem}\label{lem:4.2}
Suppose that $Z(\mc G)^\circ$ is $K$-anisotropic. Let $(\hat{P}_{\mf f,M}, \hat \sigma)$
and $(\hat{P}_{\mf f,M_d},\hat \sigma)$ be as in Lemma \ref{lem:4.1}.b.
\enuma{
\item The inclusion $G_\der \to G$ induces an algebra isomorphism
\[
\mc H (G_\der, \hat{P}_{\mf f,G_\der},\hat \sigma) \to \mc H (G,\hat{P}_{\mf f,G},\hat \sigma) .
\]
\item Suppose that $\delta \in \Irr (G)_{(\hat{P}_{\mf f,G},\hat \sigma)}$ is square-integrable,
and let $\delta_\der$ be its pullback to $G_\der$. Then $\delta_\der$ is irreducible and
\[
\frac{\fdeg (\delta)}{\fdeg (\delta_\der)} = 
\frac{ q^{\dim (\overline{Z(\mc G)^\circ}) / 2}}{|\overline{Z(\mc G)^\circ}(k) |} .
\]
}
\end{lem}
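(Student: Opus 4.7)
The plan is to establish (a) by matching the two affine Hecke algebras via their explicit semidirect product description \eqref{eq:3.10}, and then to deduce (b) by combining the formal degree formula \eqref{eq:1.5} with a direct volume computation for the parahoric subgroups.

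For (a), the plan is to write both algebras in the form
\[
\mc H (G,\hat{P}_{\mf f,G},\hat \sigma) \cong \mc H_\af (G,P_{\mf f,G},\sigma)
\rtimes \Omega_{G,\mf f} / \Omega_{G,\mf f,\tor},
\]
and likewise for $G_\der$. The affine subalgebra is determined by the local root datum at $\mf f$ together with the parameter function $q^{\mc N}$; since $\mc G = \mc G_\der \cdot Z(\mc G)^\circ$ with finite central intersection and $Z(\mc G)^\circ$ contributes no roots, the Bruhat--Tits buildings of $\mc G$ and $\mc G_\der$ are canonically identified (as already used in the proof of Lemma \ref{lem:4.1}) and the affine Hecke subalgebras coincide by the same reasoning as in the adjoint case considered in Section \ref{sec:AHA}. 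For the finite quotient part, I would invoke \eqref{eq:4.5} together with Lemma \ref{lem:4.1}(b) to conclude $\Omega_{G,\mf f}/\Omega_{G,\mf f,\tor} = \Omega_{G_\der,\mf f}/\Omega_{G_\der,\mf f,\tor}$, the semidirect product actions being transported under the identification.

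For (b), irreducibility of $\delta_\der$ is immediate from (a) and the Bushnell--Kutzko equivalence \eqref{eq:1.4}, because $\delta$ and $\delta_\der$ then correspond to the same irreducible module over the common Hecke algebra. To compare formal degrees I would apply \eqref{eq:1.5} to both sides: the isomorphism of (a) identifies the central character $r$, the rational constant $d_{\mc H,\delta}$, the parameter function and hence the residue $m(q^{\mc N})^{(r)}$, and leaves $\dim(\hat\sigma)$ unchanged. Only the normalization \eqref{eq:1.10} survives, giving
\[
\frac{\fdeg (\delta)}{\fdeg (\delta_\der)} =
\frac{\vol (\hat{P}_{\mf f,G_\der})}{\vol (\hat{P}_{\mf f,G})} .
\]
By \eqref{eq:1.7} applied to both groups and the equality of the torsion $\Omega$-groups noted above, the indices $[\hat{P}_{\mf f}:P_{\mf f}]$ coincide, so this ratio equals $\vol(P_{\mf f,G_\der})/\vol(P_{\mf f,G})$, which by \eqref{eq:1.3} unfolds into a cardinality-times-power-of-$q$ expression in terms of the reductions $\overline{\mc G_{\mf f}^\circ}$ and $\overline{\mc G_{\der,\mf f}^\circ}$.

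The main obstacle, and the key geometric input, is comparing these two reductions. The plan is to show, using the functoriality of Bruhat--Tits group schemes under the isogeny $\mc G_\der \times Z(\mc G)^\circ \to \mc G$, that there is a central isogeny of connected reductive $k$-groups
\[
\overline{\mc G_{\der,\mf f}^\circ} \times \overline{Z(\mc G)^\circ} \longrightarrow
\overline{\mc G_{\mf f}^\circ}.
\]
This immediately yields the dimension identity $\dim \overline{\mc G_{\mf f}^\circ} = \dim \overline{\mc G_{\der,\mf f}^\circ} + \dim \overline{Z(\mc G)^\circ}$. For the cardinalities, a central isogeny of connected reductive groups over a finite field preserves the number of $k$-rational points: the Lang--Steinberg theorem applied to the long exact cohomology sequence of the finite central kernel $K$ gives $|K^{\Fr}| = |H^1(\Fr,K)|$ (the Euler--Poincar\'e characteristic of a finite commutative group scheme over $k$ being trivial), and these two contributions cancel in the count. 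Hence $|\overline{\mc G_{\mf f}^\circ}(k)| = |\overline{\mc G_{\der,\mf f}^\circ}(k)| \cdot |\overline{Z(\mc G)^\circ}(k)|$, and substituting back produces exactly the claimed ratio $q^{\dim \overline{Z(\mc G)^\circ}/2}/|\overline{Z(\mc G)^\circ}(k)|$.
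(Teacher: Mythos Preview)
Your proposal is correct and follows essentially the same route as the paper. For (a) the paper argues slightly more directly by identifying the lattices $X_{\mf f,G} = \Omega_{M,\mf f_M}/\Omega_{M,\mf f_M,\tor}$ and $X_{\mf f,G_\der}$ via \eqref{eq:4.5}, rather than going through the semidirect product \eqref{eq:3.10}; and for (b) the paper cites \cite[Proposition 1.4.12.c]{GeMa} for the equality of point counts under a central isogeny over $k$, which is exactly the Lang--Steinberg argument you spell out. One small remark: in passing from $\vol(\hat P_{\mf f})$ to $\vol(P_{\mf f})$ you need $|\Omega_{G,\mf f,\tor}| = |\Omega_{G_\der,\mf f,\tor}|$, not just the equality of quotients; this is immediate once you note that \eqref{eq:4.5} (with $M=G$) gives $\Omega_G = \Omega_{G_\der}$ outright.
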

\begin{proof}
(a) By \cite[Lemma 3.5 and (42)]{SolLLCunip} these two affine Hecke algebras differ only in
the involved lattices $X_{\mf f}$. From \eqref{eq:4.5} and the proof of 
\cite[Theorem 3.3.b]{SolLLCunip} we see that
\[
X_{\mf f,G} = \Omega_{M,\mf f_M} / \Omega_{M,\mf f_M,\tor} = \Omega_{M_d,\mf f_M} /
\Omega_{M_d,\mf f_M,\tor} = X_{\mf f,G_\der} .
\]
Hence $\mc H (G_\der, \hat{P}_{\mf f,G_\der},\hat \sigma)$ can be identified with
$\mc H (G,\hat{P}_{\mf f,G},\hat \sigma)$, and the canonical map between them is an 
isomorphism. We note that nevertheless the traces of these algebras may be normalized
differently.\\
(b) Let $\delta_{\mc H}$ be the $\mc H (G, \hat{P}_{\mf f,G},\hat \sigma)$-module
associated to $\delta$ via \eqref{eq:1.4}. By Lemma \ref{lem:4.1} 
$\delta_\der \in \Rep (G_\der)_{( \hat{P}_{\mf f,G_\der},\hat \sigma)}$. By part (a)
the $\mc H (G_\der, \hat{P}_{\mf f,G_\der},\hat \sigma)$-module $\delta_{\der,\mc H}$
can be identified with $\delta_{\mc H}$, and in particular it is irreducible. 
From \cite{BHK} and \eqref{eq:1.5} we see that
\[
\frac{\fdeg (\delta)}{\fdeg (\delta_\der)} = 
\frac{\fdeg (\delta_{\mc H})}{\fdeg (\delta_{\der,\mc H})} =
\frac{\vol (\hat{P}_{\mf f,G_\der})}{\vol (\hat{P}_{\mf f,G})} .
\]
By \eqref{eq:1.7} and \eqref{eq:4.5} this equals
\begin{equation}\label{eq:4.6}
\frac{\vol (P_{\mf f,G_\der}) |\Omega_{G_\der,\mf f,\tor}|}{
\vol (P_{\mf f,G}) |\Omega_{G,\mf f,\tor} |} =
\frac{\vol (P_{\mf f,G_\der}) |\Omega_{M_\der,\mf f,\tor}|}{
\vol (P_{\mf f,G}) |\Omega_{M,\mf f,\tor} |} =
\frac{\vol (P_{\mf f,G_\der})}{\vol (P_{\mf f,G})} . 
\end{equation} 
These volumes, with respect to our normalized Haar measures, are expressed in terms of
$k$-groups in \eqref{eq:1.3}. Since $\overline{\mc G_{\mf f}^\circ}$ is isogenous to
$\overline{\mc G_{\der,\mf f}^\circ} \times \overline{Z(\mc G)^\circ}$, we have
\cite[Proposition 1.4.12.c]{GeMa}
\[
|\overline{\mc G_{\mf f}^\circ} (k)| =
|\overline{\mc G_{\der,\mf f}^\circ}(k)| \, |\overline{Z(\mc G)^\circ}(k)| .
\]
Then \eqref{eq:4.6} becomes
\begin{equation}\label{eq:5.7}
\frac{\vol (P_{\mf f,G_\der})}{\vol (P_{\mf f,G})} = \frac{|\overline{\mc G_{\der,\mf f}^\circ}(k)| 
\, q^{-\dim (\overline{\mc G_{\der,\mf f}^\circ}) / 2}}{
|\overline{\mc G_{\mf f}^\circ}(k)| \, q^{-\dim (\overline{\mc G_{\mf f}^\circ}) / 2}} =
\frac{ q^{\dim (\overline{Z(\mc G)^\circ}) / 2}}{|\overline{Z(\mc G)^\circ}(k) |} .\qedhere
\end{equation}
\end{proof}

With all preparations complete, we can prove our main result, the HII conjecture
\eqref{eq:HII1} for unipotent representations.

\begin{thm}\label{thm:4.3}
Let $\mc G$ be a connected reductive $K$-group which splits over an unramified extension.
Let $\delta \in \Irr_\unip (G)$ be square-integrable modulo centre and let $(\phi_\delta,
\rho_\delta)$ be its enhanced L-parameter via Theorem \ref{thm:1.1}. Let $\psi : K \to \C^\times$
have order -1 and normalize the Haar measure on $G$ as in \cite{HII} and \cite{GaGr}. Then
\[
\fdeg (\delta) = \pm \dim (\rho_\delta) | S_{\phi_\delta}^\sharp |^{-1} 
\gamma (0,\mr{Ad}_{G^\vee} \circ \phi_\delta, \psi).
\]
\end{thm}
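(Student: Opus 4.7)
The plan is to reduce to the semisimple case Theorem \ref{thm:3.10} by first killing the maximal $K$-split central torus $Z(\mc G)_s$ and then pulling back to the derived group $\mc G_\der$.

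First, by Theorem \ref{thm:1.1}(c), $Z(G)_s$ acts on $\delta$ through a unitary weakly unramified character. Dualising the exact sequence \eqref{eq:1.12} gives a surjection $X_\Wr(G) \twoheadrightarrow X_\Wr(Z(G)_s)$, which restricts to a surjection on unitary characters because $S^1$ is divisible. After twisting $\delta$ by a suitable unitary element of $X_\Wr(G)$, I may assume that $\delta|_{Z(G)_s}$ is trivial, so that $\delta$ descends to a representation of $G/Z(G)_s$. This twist preserves $\fdeg(\delta)$ (defined via an integral over $G/Z(G)_s$) and, by Theorem \ref{thm:1.1}(b) together with the triviality of $\mr{Ad}_{G^\vee}$ on $Z(G^\vee)^{\mb W_K}$, also preserves $\dim\rho_\delta$, $|S_{\phi_\delta}^\sharp|$ and $\gamma(0,\mr{Ad}_{G^\vee}\circ\phi_\delta,\psi)$. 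Hence I may assume $Z(\mc G)^\circ$ is $K$-anisotropic, so that $Z(G)$ is compact and $\delta$ is square-integrable.

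Under this assumption Lemma \ref{lem:4.2} applies: the pullback $\delta_\der \in \Irr(G_\der)$ is irreducible, and
\[
\fdeg(\delta)/\fdeg(\delta_\der) = q^{\dim \overline{Z(\mc G)^\circ}/2}\big/|\overline{Z(\mc G)^\circ}(k)|.
\]
Since $\mc G_\der$ is semisimple, Theorem \ref{thm:3.10}(b) supplies the HII formula for $\fdeg(\delta_\der)$ in terms of $(\phi_{\delta_\der},\rho_{\delta_\der})$. To conclude I need to match the Galois-side data. By Theorem \ref{thm:1.1}(k) together with Theorem \ref{thm:3.1}, $\phi_{\delta_\der}$ is the composition of $\phi_\delta$ with the canonical map ${}^L G \to {}^L G_\der$, and $\rho_{\delta_\der}=\rho_\delta$ because $\mc A_\phi$ depends only on the adjoint quotient, which is common to $G$ and $G_\der$. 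The short exact sequence $1 \to Z(G^\vee)^\circ \to Z_{G^\vee}(\phi_\delta) \to Z_{G_\der^\vee}(\phi_{\delta_\der}) \to 1$ combined with connectedness of $Z(G^\vee)^\circ$ gives $|S^\sharp_{\phi_\delta}| = |S^\sharp_{\phi_{\delta_\der}}|$. Finally, the virtual $\mb W_K$-representation $\mr{Ad}_{G^\vee} - \mr{Ad}_{G_\der^\vee}$ (realised via $\phi_\delta$) is exactly $\mr{Lie}\,Z(G^\vee)$, the Lie algebra of the complex torus dual to the unramified $K$-anisotropic torus $\mc G/\mc G_\der$ (which is isogenous to $Z(\mc G)^\circ$), on which $\phi_\delta$ factors through $\phi_\delta|_{\mb W_K}$.

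The main obstacle is the identity
\[
\bigl|\gamma\bigl(0,\mr{Lie}\,Z(G^\vee),\psi\bigr)\bigr| = q^{\dim \overline{Z(\mc G)^\circ}/2}\big/|\overline{Z(\mc G)^\circ}(k)|,
\]
which must exactly cancel the volume ratio produced by Lemma \ref{lem:4.2}(b). This is the standard local-factor computation for an unramified anisotropic torus with the paper's normalisation of $\psi$, and is provided by Lemma \ref{lem:A.4} in the appendix. Substituting this identity into the HII formula for $\delta_\der$ and the volume ratio of Lemma \ref{lem:4.2}(b) produces the desired formula for $\fdeg(\delta)$.
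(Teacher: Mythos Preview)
Your overall strategy matches the paper's: reduce to $Z(\mc G)^\circ$ anisotropic, pull back to $G_\der$ via Lemma~\ref{lem:4.2}, and invoke Theorem~\ref{thm:3.10}.b. But two of your intermediate claims are false as stated, and they only happen to cancel.

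The short exact sequence you write,
\[
1 \to Z(G^\vee)^\circ \to Z_{G^\vee}(\phi_\delta) \to Z_{G_\der^\vee}(\phi_{\delta_\der}) \to 1,
\]
is wrong: an element $c \in Z(G^\vee)^\circ$ centralises $\phi_\delta(\Fr)$ only if $\theta(c)=c$, so the kernel is $(Z(G^\vee)^\circ)^\theta$, not $Z(G^\vee)^\circ$. Since $Z(\mc G)_s = 1$ forces $\theta$ to have no eigenvalue $1$ on $\mr{Lie}\,Z(G^\vee)$, the group $(Z(G^\vee)^\circ)^\theta$ is \emph{finite} of order $n_1 := |\det(1-\theta)|$; moreover $\phi_\delta$ discrete makes $Z_{G^\vee}(\phi_\delta)$ finite, so in fact $|S^\sharp_{\phi_\delta}| = n_1 \cdot |S^\sharp_{\phi_{\delta_\der}}|$, not equality. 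Likewise your $\gamma$-identity is off by the same $n_1$: the computation behind Lemma~\ref{lem:A.4} (specifically \eqref{eq:A.18}) gives
\[
\gamma\bigl(0,\mr{Lie}\,Z(G^\vee),\psi\bigr) \;=\; \pm\, n_1 \cdot \frac{q^{\dim \overline{Z(\mc G)^\circ}/2}}{|\overline{Z(\mc G)^\circ}(k)|},
\]
and Lemma~\ref{lem:A.4} itself is about $\hat{\mf t}$, not $\mr{Lie}\,Z(G^\vee)$. The two factors of $n_1$ do cancel in the final HII ratio, which is why the paper packages this comparison as a single identity (quoting \cite[Lemma 16.4]{FOS}) rather than splitting it into pieces.

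Two smaller points: Theorem~\ref{thm:3.1} is stated only for quotient maps $\mc G \to \mc G/\mc Z$, not for inclusions like $\mc G_\der \hookrightarrow \mc G$, so you need the more general functoriality from \cite{SolFunct} behind Theorem~\ref{thm:1.1}(k) to identify $\rho_{\delta_\der}$. And the reduction from $G$ to $G/Z(G)_s$ is not just a twist: one must check that the \cite{HII} Haar measure on $G/Z(G)_s$ agrees with the quotient measure used in the definition \eqref{eq:1.2} of $\fdeg(\delta)$; the paper defers this to \cite[proof of Theorem~3]{FOS}.
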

\begin{proof}
For the moment we assume that $Z(\mc G)^\circ$ is $K$-anisotropic. 
Then Lemma \ref{lem:4.2} tells us that the pullback $\delta_\der$ of $\delta$
along $G_\der \to G$ is irreducible, so that Theorem \ref{thm:3.10}.b applies to 
$\delta_\der \in \Irr_\unip (G_\der)$. 

In the proof of \cite[Lemma 16.4]{FOS} it was shown that 
\[
\frac{\dim (\rho_\delta) | S_{\phi_{\delta_\der}}^\sharp | \gamma (0,\mr{Ad}_{G^\vee} 
\circ \phi_\delta, \psi)}{\dim (\rho_{\delta_\der}) |S_{\phi_\delta}^\sharp | 
\gamma (0,\mr{Ad}_{G_\der^\vee} \circ \phi_{\delta_\der}, \psi)} =
\frac{q^{\dim (\overline{Z(\mc G)^\circ}) / 2}}{|\overline{Z(\mc G)^\circ}(k) |} .
\]
By Lemma \ref{lem:4.2}.b the right hand side equals $\fdeg (\delta) \fdeg (\delta_\der)^{-1}$.
Combining that with the formula for $\fdeg (\delta_\der)$ from Theorem \ref{thm:3.10}.b,
we find the desired expression for $\fdeg (\delta)$.

Now we consider any $\mc G$ as in the statement of the theorem. The connected reductive
$K$-group $\mc G / Z(\mc G)_s$ has $K$-anisotropic connected centre. It was shown in
\cite[proof of Theorem 3 on page 42]{FOS} how the theorem for $G$ can be derived from the
theorem for $G / Z(G)_s$. Although \cite{FOS} is formulated only for supercuspidal
representations, this proof also works for square-integrable modulo centre representations
when we use the local Langlands correspondence from Theorem \ref{thm:1.1} 
(especially part (b) on the compatibility with weakly unramified characters).
\end{proof}

\section{Extension to tempered representations}

\subsection{Normalization of densities} \
\label{par:norm}

In this paragraph we study the Plancherel densities for essentially square-integrable
representations of a reductive group $G$ with non-compact centre.

We fix an essentially square-integrable unipotent $\pi \in \Irr (G)$, trivial on $Z(G)_s$.
Recall that we have canonical Haar measures and hence canonical Plancherel measures
for $G$ and for $G / Z(G)_s$. Further, Conjecture \ref{conj:HII} and \cite{Wal}
impose a measure on $\mc O = X_\unr (G) \pi \subset \Irr (G)$. Our conventions
(in particular ord$(\psi) = -1$) force us to slightly modify the latter measure.
We propose a new normalization and we check that it results in a nice formula
for the Plancherel mass of $\mc O$.

Let $G^1$ be the subgroup of $G$ generated by all compact subgroups and let 
$Z(G)_s^1$ be the unique maximal compact subgroup of $Z(G)_s$. 
We endow $X_\unr (Z(G)_s)$ with the Haar measure of total mass $\vol (Z(G)_s^1)^{-1}$. 
Following \cite[p. 302]{Wal}, we decree that the covering maps
\[
\begin{array}{ccccc}
X_\unr (Z(G)_s) & \leftarrow & X_\unr (G) & \to & \mc O \\
\chi |_{Z(G)_s} &  \text{\rotatebox[origin=c]{180}{$\mapsto$}} & \chi &
\mapsto & \chi \otimes \pi
\end{array}
\]
are locally measure preserving. We denote the associated density on $\mc O$ by d$\mc O$.
Notice that the degree of $X_\unr (G) \to X_\unr (Z(G)_s)$ equals $[G : G^1 Z(G)_s]$. Write
\[
\mc O \cap \Irr (G / Z(G)_s) = \{ \pi \otimes \chi \in \mc O : Z(G)_s \subset \ker \chi \} .
\]
Tensoring $\pi$ with $\chi$ gives a covering map
\[
\ker \big( X_\unr (G) \to X_\unr (Z(G)_s) \big) \to \mc O \cap \Irr (G / Z(G)_s) ,
\]
whose degree equals the degree of $X_\unr (G) \to \mc O$. Hence the number of elements
of any fiber of $X_\unr (G) \to \mc O$ is
\[
[G : G^1 Z(G)_s] \, |\mc O \cap \Irr (G / Z(G)_s) |^{-1} .
\]
It follows that
\begin{align}
& \vol (X_\unr (G)) = [G : G^1 Z(G)_s] \, \vol ( Z(G)_s^1)^{-1} , \\
\label{eq:3.4} & \vol (\mc O) = | \mc O \cap \Irr (G / Z(G)_s) | \, \vol ( Z(G)_s^1)^{-1} .
\end{align}
\begin{lem}\label{lem:6.1}
The Plancherel density on $\mc O$ is $\fdeg (\pi) \,\textup{d} \mc O$ and
\[
\mu_{Pl} (\mc O) = \fdeg (\pi) \vol (\mc O) .
\]
\end{lem}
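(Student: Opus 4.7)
The approach is to reduce the density identity to the classical fact that an irreducible representation which is square-integrable modulo centre contributes to the Plancherel measure of $\mc H(G,\chi^{-1})$ (with $\chi$ the restriction of its central character to $Z_s := Z(G)_s$) a point mass equal to its formal degree. First, the second identity is a formal consequence of the first: every $\pi'\in\mc O$ is a twist of $\pi$ by a unitary unramified character of $G$, hence $\fdeg(\pi')=\fdeg(\pi)$ is constant on $\mc O$; integrating this constant against $\textup{d}\mc O$ then yields $\fdeg(\pi)\vol(\mc O)$.

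For the density statement itself I would disintegrate $\mu_{Pl}$ along the central-character map $\Irr(G)\to \widehat{Z_s}$. For $f\in \mc H(G)$ set $\hat f(\chi,g)=\int_{Z_s} f(zg)\chi(z)\,dz$, which lies in the convolution algebra $\mc H(G,\chi^{-1})$ of functions compactly supported modulo $Z_s$ and transforming by $\chi^{-1}$ under $Z_s$. Fourier inversion on $Z_s$ applied to $f(1)$, combined with the Plancherel formula for each $\mc H(G,\chi^{-1})$, gives
\[
\mu_{Pl} \;=\; \int_{\widehat{Z_s}} \mu_{Pl,G,\chi}\,d\chi ,
\]
with $d\chi$ the Plancherel dual Haar measure on $\widehat{Z_s}$. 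Essential square-integrability forces $\mu_{Pl,G,\chi_{\pi'}|_{Z_s}}(\{\pi'\})=\fdeg(\pi)$ for every $\pi'\in\mc O$, and $\chi_{\pi'}|_{Z_s}$ ranges through the compact subtorus $X_\unr(Z_s)\subset \widehat{Z_s}$ since $\pi$ is trivial on $Z_s$ and the twists come from $X_\unr(G)$.

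It then remains to identify the resulting pushforward measure on $\mc O$ with $\fdeg(\pi)\,\textup{d}\mc O$. By construction $\textup{d}\mc O$ is the measure for which the central-character fibration $\mc O\to X_\unr(Z_s)$ is locally measure-preserving, once $X_\unr(Z_s)$ is endowed with Haar of total mass $\vol(Z_s^1)^{-1}$. The piece still to verify is that the Plancherel dual measure $d\chi$ on $\widehat{Z_s}$, restricted to $X_\unr(Z_s)$, also has total mass $\vol(Z_s^1)^{-1}$. This follows by feeding $f=\mathbf{1}_{Z_s^1}\in \mc H(Z_s)$ into Fourier inversion, yielding the identity $\vol(Z_s^1)\vol(X_\unr(Z_s))=1$. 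The hard part of the argument is precisely this normalization bookkeeping, confirming that the \cite{HII} Haar measure on $Z_s$ induces the dual mass $\vol(Z_s^1)^{-1}$ on $X_\unr(Z_s)$ declared just before the lemma; once that is in place, the disintegration of $\mu_{Pl}$ matches $\fdeg(\pi)\,\textup{d}\mc O$ fibrewise and the lemma follows.
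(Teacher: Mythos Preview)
Your proposal is correct and takes a somewhat different route from the paper. The paper picks a specific test function $f$ supported on $G^1$ that isolates $\mc O$ (so $\mr{tr}\,\pi'(f)=1$ for $\pi'\in\mc O$ and $0$ elsewhere), then compares the Plancherel formula on $G$ with the Plancherel formula on the quotient group $G/Z(G)_s$: since $\pi$ is trivial on $Z(G)_s$, it is genuinely square-integrable there and contributes point mass $\fdeg(\pi)$. The volume bookkeeping is done by rescaling $f$ to a function $f_2$ on $G/Z(G)_s$ and tracking the factor $\vol(Z(G)_s^1)$. You instead disintegrate $\mu_{Pl}$ along the central-character map $\Irr(G)\to\widehat{Z_s}$ and invoke the fibrewise Plancherel formula for $\mc H(G,\chi^{-1})$. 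Both arguments reduce to the same core fact (formal degree equals Plancherel point mass for square-integrable representations), but the paper \emph{quotients} by $Z_s$ while you \emph{fibre} over $\widehat{Z_s}$. Your approach is the standard Harish-Chandra/Waldspurger framework and is more conceptual; the paper's test-function argument is more hands-on and avoids setting up the full central-character disintegration, at the cost of a slightly ad hoc comparison between $f$ on $G$ and its descent to $G/Z(G)_s$. The normalization check you flag (that the dual measure on $X_\unr(Z_s)$ has total mass $\vol(Z_s^1)^{-1}$) is exactly the content absorbed into the paper's rescaling $f_2=\vol(Z(G)_s^1)f_1$.
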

\begin{proof}
Choose a test function $f \in C_r^* (G)$ such that $f$ is supported on $G^1$, 
$\mr{tr} \, \pi (f) = 1$ and $f$ acts as 0 on all irreducible $G$-representations outside 
$\mc O$. Then $f$ is $Z(G)_s^1$-invariant and tr$(\pi \otimes \chi)(f) = 1$ for all 
$\chi \in X_\unr (G)$. By definition
\begin{equation}\label{eq:6.1}
f(1) = \int_{\mc O} \mr{tr} \, \pi (f) \, \textup{d} \mu_{Pl}(\pi) = \mu_{Pl}(\mc O).
\end{equation}
Since $f$ is $Z(G)_s^1$-invariant, it defines a function $f_1$ on 
\[
G^1 / Z(G)_s^1 \cong G^1 Z(G)_s / Z(G)_s , 
\]
which we extend by zero to whole of $G / Z(G)_s$. Due to the difference
in the Haar measures, $f$ and $f_1$ act differently on representations of $G / Z(G)_s$.
Instead, the function $f_2 := \vol (Z(G)_s^1) f_1$ has the same action as $f$ on any smooth
$G / Z(G)_s$-representation. This can be seen by expressing $f$ on a small 
subset of the form $X \cong X / Z(G)_s^1 \times Z(G)_s^ 1$ as
\[
f \big|_X = f_2 \big|_{X / Z(G)_s^1} \cdot \frac{1_{Z(G)_s^1}}{\vol (Z(G)_s^1)} .
\]
In view of the construction of $f$, the function $f_2$ detects only the $G$-representations 
$\chi \otimes \pi$ with $\chi \in X_\unr (G)$ and  $Z(G)_s \subset \ker \chi$. 
All these representations have the same Plancherel density (both for $G$ and for $G / Z(G)_s$).
The Plancherel formula for $G / Z(G)_s$ gives
\begin{multline}
f(1) \vol (Z(G)_s^1) = f_2 (1) = \int_{\mc O \cap \Irr (G / Z(G)_s)} \mr{tr} \, \pi (f_2) \,
\textup{d} \mu_{Pl,G/Z(G)_s}(\pi) \\
= \mu_{Pl,G/Z(G)_s}(\mc O \cap \Irr (G / Z(G)_s)) 
= |\mc O \cap \Irr (G / Z(G)_s)| \mu_{Pl,G/Z(G)_s} (\pi) .
\end{multline}
Comparing with \eqref{eq:3.4} and \eqref{eq:6.1}, we find
\[
\mu_{Pl}(\mc O) = \vol (Z(G)_s^1)^{-1} |\mc O \cap \Irr (G / Z(G)_s)| 
\mu_{Pl,G/Z(G)_s} (\pi) = \vol (\mc O) \fdeg (\pi) .
\]
As tensoring with unramified unitary characters preserves the Plancherel density, this
means that $\fdeg (\pi) \, \textup{d} \mc O$ is the Plancherel density on $\mc O$. 
\end{proof}

Let $(\hat P_{\mf f},\hat \sigma)$ be the unipotent type such that
$\pi \in \Irr (G)_{(\hat P_{\mf f},\hat \sigma)}$. We abbreviate 
$\mc H = \mc H (G,\hat P_{\mf f},\hat \sigma)$. The representation $\hat \sigma$ is trivial 
on $Z(G)_s^1$, so $(\hat{P}_{\mf f},\hat \sigma)$ descends to a type 
$(\hat{P}_{\mf f} / Z(M)_s^1,\hat \sigma)$ for the group $G / Z(G)_s$. This type can
detect more than one Bernstein component, because $\hat P_{G / Z(G)_s,\mf f}$ can
properly contain $\hat{P}_{\mf f} / Z(M)_s^1$. Let $\sigma'$ be the (unique) extension of
$\hat \sigma$ to $\hat P_{G / Z(G)_s,\mf f}$ which is contained in $\pi$. Then
\[
\mc H_{ss} := \mc H \big( G / Z(G)_s, \hat P_{G / Z(G)_s,\mf f}, \sigma' \big)
\]
is naturally a quotient of $\mc H$, obtained by mapping the generators $N_w \in \mc H$ with
$w \in Z(G) / Z(G)_s^1$ to suitable scalars. The traces $\tau$ and $\tau_{ss}$ of $\mc H$
and $\mc H_{ss}$, normalized as in \eqref{eq:1.10}, differ at the unit element:

\begin{lem}\label{lem:6.2}
$\displaystyle{ \frac{\tau_{ss}(N_e)}{\tau (N_e)} = \frac{|\Omega_{G,\mf f,\tor}|  \,
\vol (Z(G)_s^1) }{|\Omega_{G/Z(G)_s,\mf f,\tor}|}
= \frac{|\Omega_{G,\mf f,\tor}|}{|\Omega_{G/Z(G)_s,\mf f,\tor}|} 
\Big( \frac{q-1}{q^{1/2}} \Big)^{\dim (Z(\mc G)_s)} }$.
\end{lem}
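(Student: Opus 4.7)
The plan is to unwind the normalization \eqref{eq:1.10} on both sides and then reduce the identity to a volume computation for the parahoric subgroups and the maximal compact subgroup of $Z(G)_s$.

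First, by the definition \eqref{eq:1.10} applied to each of the two affine Hecke algebras,
\[
\tau(N_e) = \dim(\hat\sigma)\,\vol(\hat P_{\mf f})^{-1}, \qquad
\tau_{ss}(N_e) = \dim(\sigma')\,\vol(\hat P_{G/Z(G)_s,\mf f})^{-1}.
\]
Since $\sigma'$ is by construction an extension of $\hat\sigma$ (viewed via the inflation from $\hat P_{\mf f}/Z(G)_s^1$), it acts on the same vector space, so $\dim(\sigma') = \dim(\hat\sigma)$. Hence
\[
\frac{\tau_{ss}(N_e)}{\tau(N_e)} \;=\; \frac{\vol(\hat P_{\mf f})}{\vol(\hat P_{G/Z(G)_s,\mf f})}.
\]
Applying \eqref{eq:1.7} to both groups converts this to
\[
\frac{\tau_{ss}(N_e)}{\tau(N_e)} \;=\; \frac{|\Omega_{G,\mf f,\tor}|}{|\Omega_{G/Z(G)_s,\mf f,\tor}|} \cdot \frac{\vol(P_{\mf f})}{\vol(P_{G/Z(G)_s,\mf f})}.
\]

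Next, I would identify the last ratio with $\vol(Z(G)_s^1)$. The short exact sequence \eqref{eq:1.1}, restricted to parahoric subgroups attached to the facet $\mf f$ (using that $Z(G)_s \cap P_{\mf f} = Z(G)_s^1$ is the unique maximal compact subgroup of the split torus $Z(G)_s$, and that $P_{\mf f}$ surjects onto $P_{G/Z(G)_s,\mf f}$), yields an exact sequence
\[
1 \to Z(G)_s^1 \to P_{\mf f} \to P_{G/Z(G)_s,\mf f} \to 1.
\]
Since the Haar measure on $G/Z(G)_s$ is (by the HII convention) the quotient measure induced from $G$ and $Z(G)_s$, integration over the fibres gives $\vol(P_{\mf f}) = \vol(Z(G)_s^1)\,\vol(P_{G/Z(G)_s,\mf f})$. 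This establishes the first equality in the lemma.

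For the second equality, I only need to compute $\vol(Z(G)_s^1)$. Since $Z(\mc G)_s$ is a $K$-split torus of dimension $d := \dim Z(\mc G)_s$, the $\mf o_K$-group scheme attached to its unique facet is a split torus of the same dimension, so its reduction $\overline{Z(\mc G)_s}$ is a $k$-split torus with $|\overline{Z(\mc G)_s}(k)| = (q-1)^d$. Applying the volume formula \eqref{eq:1.3} therefore gives
\[
\vol(Z(G)_s^1) \;=\; (q-1)^d \, q^{-d/2} \;=\; \Bigl(\frac{q-1}{q^{1/2}}\Bigr)^{\dim Z(\mc G)_s},
\]
which yields the second equality. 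The only potential obstacle is the surjectivity of $P_{\mf f}\to P_{G/Z(G)_s,\mf f}$, which requires invoking the compatibility of the Bruhat--Tits building and parahoric subgroup schemes under the central quotient $\mc G \to \mc G/Z(\mc G)_s$; this is standard since $Z(\mc G)_s$ is a $K$-split torus and the corresponding $H^1$ vanishes.
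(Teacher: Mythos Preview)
Your proposal follows the same overall structure as the paper: apply \eqref{eq:1.10} and \eqref{eq:1.7} to reduce to the volume ratio $\vol(P_{\mf f})/\vol(P_{G/Z(G)_s,\mf f})$, identify this with $\vol(Z(G)_s^1)$, and then compute the latter via \eqref{eq:1.3}. The first and last steps match the paper exactly.

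The difference is in the middle step. You obtain $\vol(P_{\mf f}) = \vol(Z(G)_s^1)\,\vol(P_{G/Z(G)_s,\mf f})$ by asserting that ``the Haar measure on $G/Z(G)_s$ is (by the HII convention) the quotient measure induced from $G$ and $Z(G)_s$''. This is not part of the HII convention: in \cite{HII,GaGr,DeRe} the Haar measure is normalized on each reductive group independently, and compatibility with a short exact sequence is a statement that has to be checked, not assumed. It is true here, but your justification is circular. The paper instead computes each volume separately from \eqref{eq:1.3} and uses that $\overline{\mc G_{\mf f}^\circ}$ is isogenous to $\overline{(\mc G/Z(\mc G)_s)_{\mf f}^\circ} \times \overline{Z(\mc G)_s}$, so that the $k$-point counts multiply (exactly as in \eqref{eq:5.7}); this directly yields the volume identity without appealing to any abstract measure compatibility. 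If you want to keep your exact-sequence argument, you should either cite a result to the effect that the Gross--Gan measures are multiplicative in short exact sequences with split-torus kernel, or verify it by the same computation the paper does.
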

\begin{proof}
By \eqref{eq:1.10} and \eqref{eq:1.7}
\[
\frac{\tau_{ss}(N_e)}{\tau (N_e)} = \frac{\dim (\sigma') \vol ( \hat P_{G / Z(G)_s,\mf f} )^{-1}}
{\dim (\hat \sigma) \, \vol ( \hat P_{G,\mf f} )^{-1}} 
= \frac{|\Omega_{G,\mf f,\tor}| \, \vol ( P_{G,\mf f} )}{|\Omega_{G/Z(G)_s,\mf f,\tor}| 
\vol ( P_{G / Z(G)_s,\mf f} )} .
\]
Let $\overline{Z(\mc G)_s} \cong GL_1^{\dim Z(\mc G)_s}$ be the connected reductive $k$-group 
associated to the unique vertex of $\mc B (Z(\mc G)_s,K)$. Since 
$\overline{\mc G_{\mf f}^\circ}$ is isogenous to 
$\overline{(\mc G / Z(\mc G)_s)_{\mf f}^\circ} \times \overline{Z(\mc G)_s}$, a calculation
analogous to \eqref{eq:5.7} shows that
\[
\frac{\vol (P_{\mf f})}{\vol (P_{G / Z(G)_s, \mf f})} =  
\frac{|\overline{Z(\mc G)_s}(k) |}{ q^{\dim (\overline{Z(\mc G)_s}) / 2}}=
\vol (Z(G)_s^1) .
\]
Finally, we note that $|\overline{Z(\mc G)_s}(k)| = |GL_1 (k)|^{\dim Z(\mc G)_s} =
(q-1)^{\dim Z(\mc G)_s}$.
\end{proof}

As only the trivial element of $\Omega_{Z(G)_s}$ fixes any point of the standard apartment
$\mh A$ of $\mc B (\mc G,K)$, \eqref{eq:1.12} entails that the natural map
\[
\Omega_{G,\mf f,\tor} \to \Omega_{G/Z(G)_s,\mf f,\tor}
\]
is injective. However, in general it need not be surjective.

We write $T_{\mf f,ss} = \Hom (X_{G / Z(G)_s, \mf f},\C^\times)$, a subtorus of
$T_{\mf f} = \Hom (X_{\mf f},\C^\times)$. The image of $X_\nr (G)$ in $T_{\mf f}$ is
another algebraic subtorus $T_{\mf f,Z}$, which is complementary in the sense that
\[
T_{\mf f,ss} T_{\mf f,Z} = T_{\mf f} \quad \text{and} \quad 
|T_{\mf f,ss} \cap T_{\mf f,Z}|< \infty. 
\]
Let $T_{\mf f,un} = \Hom (X_{\mf f},S^1)$, the maximal compact real subtorus of
$T_{\mf f}$. We define $T_{\mf f,ss,un}$ and $T_{\mf f,Z,un}$ similarly. 
Write $\pi_{\mc H} = \Hom_{\hat P_{\mf f}}(\hat \sigma,\pi) \in \mr{Mod} (\mc H)$. 
By \eqref{eq:1.4} the map
\[
X_\unr (G) \to \mc O : \chi \mapsto \chi \otimes \pi 
\]
induces a surjection 
\[
T_{\mf f,Z,un} \to T_{\mf f,Z,un} \pi_{\mc H} = 
\{ \Hom_{\hat P_{\mf f}}(\hat \sigma,\chi \otimes \pi) : \chi \in X_\unr (G) \} . 
\]
Furthermore $T_{\mf f,Z,un} \pi_{\mc H}$ is in bijection with $\mc O$ via
\eqref{eq:1.4}. 

Let d$t_Z$ be the Haar measure on $T_{\mf f,Z,un} \pi_{\mc H}$ with total volume 1.
Since \eqref{eq:1.4} preserves Plancherel measures \cite{BHK}, $\mu_{Pl,G} 
\big|_{\mc O}$ and $\mu_{Pl,\mc H} \big|_{T_{\mf f,Z,un} \pi_{\mc H}}$ agree. 
With Lemma \ref{lem:6.1} we find that  
\begin{equation}\label{eq:6.2}
\textup{d} \mu_{Pl,\mc H}(\pi_{\mc H}) = \fdeg_{\mc H_{ss}}(\pi_{\mc H}) \,| \mc O \cap
\Irr (G / Z(G)_s) | \, \vol (Z(G)_s^1 )^{-1} \textup{d} t_Z .
\end{equation}
This can also be formulated entirely in terms of affine Hecke algebras:

\begin{lem}\label{lem:6.3}
$\textup{d} \mu_{Pl,\mc H}(\pi_{\mc H}) = \fdeg_{\mc H_{ss}}(\pi_{\mc H}) \tau (N_e) 
\tau_{ss}(N_e)^{-1} | (T_{\mf f,ss} \cap T_{\mf f,Z}) \pi_{\mc H}| \, \textup{d} t_Z $. 
\end{lem}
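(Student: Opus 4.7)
The plan is to reduce Lemma \ref{lem:6.3} to a combinatorial orbit-counting identity, using \eqref{eq:6.2} and Lemma \ref{lem:6.2}. Solving Lemma \ref{lem:6.2} for $\vol(Z(G)_s^1)^{-1}$ and substituting into \eqref{eq:6.2} rewrites the Plancherel density as
\[
\fdeg_{\mc H_{ss}}(\pi_{\mc H}) \, \frac{\tau(N_e)}{\tau_{ss}(N_e)} \cdot \frac{|\Omega_{G,\mf f,\tor}|}{|\Omega_{G/Z(G)_s,\mf f,\tor}|} \cdot \big|\mc O \cap \Irr(G/Z(G)_s)\big| \, \textup{d}t_Z ,
\]
so Lemma \ref{lem:6.3} reduces to the purely combinatorial identity
\[
\big|(T_{\mf f,ss} \cap T_{\mf f,Z})\pi_{\mc H}\big| \; = \; \frac{|\Omega_{G,\mf f,\tor}|}{|\Omega_{G/Z(G)_s,\mf f,\tor}|} \cdot \big|\mc O \cap \Irr(G/Z(G)_s)\big| .
\]

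To establish this identity, I would view both sides as orbit sizes of finite groups and match them via the Bushnell--Kutzko equivalence \eqref{eq:1.4}. The twisting action of $X_\unr(G)$ on $\Rep(G)^{\mf s}$ corresponds, under \eqref{eq:1.4}, to the translation action of (a cover of) $T_{\mf f,Z,un}$ on $\mr{Mod}(\mc H)$. Restricting to the subgroup $X_\unr(G/Z(G)_s) \subset X_\unr(G)$ of characters trivial on $Z(G)_s$ produces precisely the set $\mc O \cap \Irr(G/Z(G)_s)$, modulo the stabilizer of $\pi$. On the Hecke-algebra side, $T_{\mf f,ss} \cap T_{\mf f,Z}$ is the subgroup of $T_{\mf f,Z}$ consisting of those translations that preserve the quotient map $\mc H \to \mc H_{ss}$, i.e.\ those that lift to characters in $X_\unr(G/Z(G)_s)$. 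The two orbits should therefore correspond under Bushnell--Kutzko, and what remains is to quantify the discrepancy between their sizes caused by the finite kernels of the covering maps from $X_\unr$ to the Bernstein tori.

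The ratio $|\Omega_{G,\mf f,\tor}|/|\Omega_{G/Z(G)_s,\mf f,\tor}|$ should emerge from the description $X_{\mf f} = \Omega_{G,\mf f}/\Omega_{G,\mf f,\tor}$ (and its analogue for $G/Z(G)_s$) used in \eqref{eq:3.10}, together with the exact sequence \eqref{eq:1.12}. Dualizing, the kernels of the surjections $X_\unr(G) \to T_{\mf f,Z,un}$ and $X_\unr(G/Z(G)_s) \to (T_{\mf f,ss}\cap T_{\mf f,Z})_{un}$ are expected to be controlled by $\Omega_{G,\mf f,\tor}$ and $\Omega_{G/Z(G)_s,\mf f,\tor}$ respectively, producing exactly the desired ratio. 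The main obstacle is making this kernel calculation precise in the presence of the twisted structure of $\mc H$ exhibited in \eqref{eq:3.10}, and verifying that the stabilizer of $\pi$ inside $X_\unr(G/Z(G)_s)$ maps bijectively onto the stabilizer of $\pi_{\mc H}$ inside $T_{\mf f,ss} \cap T_{\mf f,Z}$, so that the orbit counts really do match up to the claimed torsion factor.
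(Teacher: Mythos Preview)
Your reduction is correct and matches the paper's: starting from \eqref{eq:6.2} and Lemma~\ref{lem:6.2}, the lemma is equivalent to the identity
\[
\big|\mc O \cap \Irr(G/Z(G)_s)\big| \;=\; \frac{|\Omega_{G/Z(G)_s,\mf f,\tor}|}{|\Omega_{G,\mf f,\tor}|}\,\big|(T_{\mf f,ss}\cap T_{\mf f,Z})\pi_{\mc H}\big|.
\]

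For this identity, however, the paper's argument is considerably more direct than your plan of tracking kernels of the covering maps $X_\unr \to T_{\mf f}$. The paper partitions $\mc O \cap \Irr(G/Z(G)_s)$ according to which extension $\sigma''$ of $\hat\sigma$ to $\hat P_{G/Z(G)_s,\mf f}$ each representation contains. By \eqref{eq:1.7} there are exactly $|\Omega_{G/Z(G)_s,\mf f,\tor}|/|\Omega_{G,\mf f,\tor}|$ such extensions, and a cited symmetry result (from \cite[\S 1.20]{LusUni1} or \cite[(40)]{SolLLCunip}) says each extension is contained in the same number of elements of $\mc O$. The block of the distinguished extension $\sigma'$ is exactly $\Irr(\mc H_{ss})$, so its share is $|T_{\mf f,Z}\pi_{\mc H}\cap\Irr(\mc H_{ss})|$, which equals $|(T_{\mf f,ss}\cap T_{\mf f,Z})\pi_{\mc H}|$ since $t\otimes\pi_{\mc H}$ descends to $\mc H_{ss}$ if and only if $t\in T_{\mf f,ss}$.

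This sidesteps precisely the obstacle you flag: rather than proving that the kernels of $X_\unr(G)\to T_{\mf f,Z}$ and $X_\unr(G/Z(G)_s)\to (T_{\mf f,ss}\cap T_{\mf f,Z})$ differ by the torsion ratio (which requires carefully controlling how $\Omega_{\bullet,\mf f,\tor}$ sits inside the lattice $X_{\mf f}$), the paper gets the ratio for free as the number of extensions of the type. Your approach is plausible but would need the lattice analysis you describe; the paper's route avoids it entirely.
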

\begin{proof}
Consider an arbitrary extension $\sigma''$ of $\hat \sigma$ to $\hat P_{G / Z(G)_s,\mf f}$.
From \cite[\S 1.20]{LusUni1} or \cite[(40)]{SolLLCunip} we see that the number
of elements of $\mc O$ that contain $\sigma''$ equals the number of elements that contain
$\sigma'$. The number of possible extensions $\sigma''$ is 
$|\Omega_{G/Z(G)_s,\mf f,\tor}| \, |\Omega_{G,\mf f,\tor}|^{-1}$, and hence
\begin{equation}\label{eq:6.3}
\big| \mc O \cap \Irr (G / Z(G)_s) \big| = |\Omega_{G/Z(G)_s,\mf f,\tor}| \, 
|\Omega_{G,\mf f,\tor}|^{-1} \big| T_{\mf f,Z} \pi_{\mc H} \cap \Irr (\mc H_{ss}) \big| .
\end{equation}
An $\mc H$-representation $t \otimes \pi_{\mc H} \in T_{\mf f} \pi_{\mc H}$ descends to
$\mc H_{ss}$ if and only if $t \in T_{\mf f,ss}$. Therefore
\[
| T_{\mf f,Z} \pi_{\mc H} \cap \Irr (\mc H_{ss})| = 
| (T_{\mf f,ss} \cap T_{\mf f,Z}) \pi_{\mc H}| .
\]
Combine that with \eqref{eq:6.2}, \eqref{eq:6.3} and Lemma \ref{lem:6.1}.
\end{proof}

We remark that Lemma \ref{lem:6.3} is in accordance with a comparison formula for Plancherel
measures of affine Hecke algebras \cite[(4.96)]{Opd-Sp}.

\subsection{Parabolic induction and Plancherel densities} \
\label{par:par}

Let $\mc M$ be a Levi $K$-subgroup of $\mc G$ and let $\pi_M \in \Irr_\unip (M)$
be essentially square-integrable. As before we write $\mc O = X_\unr (M) \pi_M$.
Let $\mc P$ be a parabolic $K$-subgroup of $\mc G$ with Levi factor $\mc M$ and
denote the normalized parabolic induction functor by $I_P^G$.
We want to express the Plancherel density on the family of finite length 
tempered unitary $G$-representations 
\begin{equation}\label{eq:6.11}
I_P^G (\mc O) = \{ I_P^G (\chi \otimes \pi_M) : \chi \in X_\unr (M) \} .
\end{equation}
Recall that the infinitesimal character of $\phi_M$ is
\[
\mr{inf.ch.}(\phi_M) = \phi_M \big( \Fr , \matje{q^{-1/2}}{0}{0}{q^{1/2}} \big) .
\]
We abbreviate $\mc H = \mc H (G,\hat P_{\mf f},\hat \sigma)$ and
$\mc H^M = \mc H (M,\hat P_{M,\mf f},\hat \sigma)$, where
$\pi_M \in \Irr (M)_{(\hat P_{M,\mf f},\hat \sigma)}$. From the proof of Theorem 
\ref{thm:1.1}, which can be retraced to \cite[Theorem 3.18.b]{AMS3}, one sees
that the central character of 
\[
\pi_{M,\mc H} = \Hom_{\hat P_{M,\mf f}}(\hat \sigma, \pi_M) \in \mr{Mod}(\mc H^M)
\]
is completely determined by inf.ch.$(\phi_M)$. More precisely, choose a basepoint for
the appropriate Bernstein component of enhanced L-parameters as in 
\cite[Lemma 3.4]{SolLLCunip} and pick $t_M \in T^\vee$ such that inf.ch.$(\phi_M)$ equals 
$t_M$ times the basepoint. Then the central character of $\pi_{M,\mc H}$ is 
$W(R_{M,\mf f}) t_M \in T_{M,\mf f} / W(R_{M,\mf f})$. 

For $t \in Z(M^\vee)^{\theta,\circ} \cong X_\nr (M)$ the $M$-representation $t \otimes \pi_M$
corresponds to $t \otimes \pi_{M,\mc H} \in \Irr (\mc H^M)$ and its enhanced L-parameter is 
$(t \phi_M, \rho_M)$, where $t \phi_M$ is defined in \eqref{eq:A.30}. 

\begin{lem}\label{lem:6.4}
For $t \in X_\unr (M)$:
\[
\textup{d} \mu_{Pl} \big( I_P^G (t \otimes \pi_M) \big) = 
\pm m^M_{\mf s} (t t_M) \gamma (0,\mr{Ad}_{M^\vee} \circ t\phi_M,\psi)
\dim (\rho_M) | S^\sharp_{\phi_M} |^{-1} \textup{d} \mc O (t \otimes \pi_M) .
\]
The factor $m^M_{\mf s} (t t_M)$, which depends on the Bernstein component 
$\Rep (G)^{\mf s}$ containing $I_P^G (\pi_M)$, is defined in 
\cite[(3.57)]{Opd-Sp} and \cite[(2.17)]{Opd18}.
\end{lem}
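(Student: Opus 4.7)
My plan is to pass to the affine Hecke algebra $\mc H = \mc H(G,\hat P_{\mf f},\hat\sigma)$ via the Bushnell--Kutzko equivalence \eqref{eq:1.4}, apply Opdam's Plancherel theorem for affine Hecke algebras to express the Plancherel density of the parabolically induced module in terms of the $\mu$-function, and then use Theorem \ref{thm:4.3} to replace the formal-degree factor by the claimed adjoint $\gamma$-factor.

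First, by \cite[Lemma 4.1]{SolComp} the equivalence \eqref{eq:1.4} intertwines normalized parabolic induction $I_P^G$ with Hecke-algebra induction $\mr{ind}^{\mc H}_{\mc H^M}$, and by \cite[Theorem 3.3]{BHK} it preserves Plancherel measures when the traces are normalized as in \eqref{eq:1.10}. Hence it suffices to prove the analogous formula on the Hecke-algebra side for $\mr{ind}^{\mc H}_{\mc H^M}(t\otimes\pi_{M,\mc H})$, and then translate back using Lemmas \ref{lem:6.2} and \ref{lem:6.3}.

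Next I would invoke the residue decomposition of the Plancherel measure of an affine Hecke algebra from \cite[\S 4]{Opd-Sp}: for a tempered $\mc H$-module parabolically induced from a discrete series representation of $\mc H^M$ with central character $W(R^M_{\mf f})\,tt_M$, Opdam's formula gives
\[
d\mu_{Pl,\mc H}\bigl(\mr{ind}^{\mc H}_{\mc H^M}(t\otimes\pi_{M,\mc H})\bigr)
= c_{\mf s}\,m^M_{\mf s}(tt_M)\,\fdeg_{\mc H^M}(t\otimes\pi_{M,\mc H})\,dt_Z,
\]
where $dt_Z$ is normalized Haar measure on the compact torus $T_{\mf f,Z,un}$ and $c_{\mf s}$ is a normalization constant arising from the ratio between the residue of the full $\mu$-function on $T_{\mf f}$ at $tt_M$ and the $\mu^M$-function on $T_{M,\mf f}$. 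By \cite[(3.57)]{Opd-Sp} and \cite[(2.17)]{Opd18}, $m^M_{\mf s}$ is by definition the part of $\mu$ transverse to the residual direction at $tt_M$. Then I would apply Theorem \ref{thm:4.3} to $t\otimes\pi_M$, which is essentially square-integrable modulo centre, unipotent, with enhanced L-parameter $(t\phi_M,\rho_M)$ and $S^\sharp_{t\phi_M}=S^\sharp_{\phi_M}$, yielding
\[
\fdeg(t\otimes\pi_M) = \pm \dim(\rho_M)\,|S^\sharp_{\phi_M}|^{-1}\,\gamma(0,\mr{Ad}_{M^\vee}\circ t\phi_M,\psi).
\]
Combined with the $M$-side of Lemma \ref{lem:6.3} (which converts $\fdeg$ on $M$ to $\fdeg_{\mc H^M}$) and Lemma \ref{lem:6.1}, this identifies the formal-degree factor as the desired $\gamma$-factor.

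Finally, I would transport $dt_Z$ to $d\mc O$ via the bijection $T_{\mf f,Z,un}\pi_{M,\mc H}\cong \mc O$ induced by \eqref{eq:1.4} and the conventions of Section \ref{par:norm}. The hard part will be the bookkeeping of normalizations: matching $d\mc O$ from Section \ref{par:norm} (defined via $X_\unr(M)\to X_\unr(Z(M)_s)$ and $\vol(Z(M)_s^1)$) with the Haar measure on $T_{\mf f,Z,un}$ appearing in Opdam's formula, and verifying that the constants issued by Lemmas \ref{lem:6.2}--\ref{lem:6.3} together with the trace normalization \eqref{eq:1.10} absorb exactly $c_{\mf s}$ and produce the coefficient $\dim(\rho_M)|S^\sharp_{\phi_M}|^{-1}$ in front of $m^M_{\mf s}(tt_M)\,\gamma(0,\mr{Ad}_{M^\vee}\circ t\phi_M,\psi)$. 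The sign $\pm$ propagates from Theorem \ref{thm:4.3}.
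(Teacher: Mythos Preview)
Your proposal is correct and follows essentially the same route as the paper: pass to the Hecke algebra via \eqref{eq:1.4} and \cite[Lemma 4.1]{SolComp}, apply Opdam's Plancherel formula \cite[(4.96)]{Opd-Sp} to produce $m^M_{\mf s}(tt_M)$ times the $\mc H^M$-Plancherel density, use the normalization results of Section~\ref{par:norm} (Lemmas~\ref{lem:6.1}--\ref{lem:6.3} and \eqref{eq:3.4}) to convert this to $\fdeg(t\otimes\pi_M)\,\textup{d}\mc O$, and finish with Theorem~\ref{thm:4.3}. The only difference is that you introduce an auxiliary constant $c_{\mf s}$ and worry about absorbing it; in the paper the citation \cite[(4.96)]{Opd-Sp} gives directly $\mu_{Pl,\mc H}\bigl(\mr{ind}_{\mc H^M}^{\mc H}(t\otimes\pi_{M,\mc H})\bigr)=m^M_{\mf s}(tt_M)\,\mu_{Pl,\mc H^M}(t\otimes\pi_{M,\mc H})$ with no extra constant, so that bookkeeping step is unnecessary.
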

\begin{proof}
Notice that $t \otimes \pi_M$ is still essentially square integrable, for 
that property is stable under tensoring by unitary characters.
By Lemma \ref{lem:6.3}, \eqref{eq:6.2} and the expression for  Plancherel densities in 
affine Hecke algebras \cite[(4.96)]{Opd-Sp}:
\begin{multline}\label{eq:6.4a}
\mu_{Pl,\mc H} \big( \mr{ind}_{\mc H^M}^{\mc H} (t \otimes \pi_{M,\mc H}) \big) =
m^M (t t_M) \mu_{Pl,\mc H^M}(t \otimes \pi_{M,\mc H}) \\
= m^M_{\mf s} (t t_M) \fdeg_{\mc H_{ss}}(\pi_{M,\mc H}) \, | \mc O \cap
\Irr (G / Z(G)_s) | \, \vol (Z(G)_s^1 )^{-1} \textup{d} t_Z .
\end{multline}
It is known from \cite[Lemma 4.1]{SolComp} that normalized parabolic induction commutes
with the functor \eqref{eq:1.4}. As \eqref{eq:1.4} preserves Plancherel densities, 
\eqref{eq:6.4a} and \eqref{eq:3.4} yield
\[
\mu_{Pl} \big( I_P^G (t \otimes \pi_M) \big) = 
m^M_{\mf s} (t t_M) \, \fdeg (t \otimes \pi_M) \, \textup{d} \mc O (t \otimes \pi_M) .  
\]
Applying Theorem \ref{thm:4.3} to $t \otimes \pi_M \in \Irr_\unip (M)$, 
we obtain the required formula.
\end{proof}

On the other hand, we already know from Theorem \ref{thm:2.1}.i that 
Conjecture \ref{conj:HII} holds up to some constant $C_{\mc O} \in \Q_{>0}$, that is:
\begin{equation}\label{eq:6.8}
\mu_{Pl} \big( I_P^G (t \otimes \pi_M) \big) = 
\pm C_{\mc O} \, \gamma (0,\mr{Ad}_{G^\vee,M^\vee} \circ t\phi_M,\psi) \dim (\rho_M) 
| S^\sharp_{\phi_M} |^{-1} \textup{d} \mc O (t \otimes \pi_M) .
\end{equation}

\begin{thm}\label{thm:6.5}
Let $\mc G$ be a connected reductive $K$-group, which spits over an unramified extension.
Let $\mc M$ be a Levi $K$-subgroup of $\mc G$ and let $\pi_M \in \Irr_\unip (M)$ be
square-integrable modulo centre. Let $\mc O = X_\unr (M) \pi_M$ be the associated
orbit in $\Irr_\unip (M)$ and define $I_P^G (\mc O)$ as in \eqref{eq:6.11}.

Let $(\phi_M,\rho_M) \in \Phi_{\nr,e}(M)$ be the enhanced L-parameter of $\pi_M$,
as in Theorem \ref{thm:1.1}. With the normalization from \eqref{eq:3.4}, the Plancherel 
density on $I_P^G (\mc O)$ is
\[
\pm \dim (\rho_M) |S_{\phi_M}^\sharp|^{-1} 
\gamma (0,\mr{Ad}_{G^\vee,M^\vee} \circ \phi_M,\psi) \, \textup{d}\mc O (\pi_M) . 
\]
That is, Conjecture \ref{conj:HII} holds for $\Irr_\unip (G)$, with $c_M = 1$.
\end{thm}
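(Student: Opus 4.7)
The plan is to derive the theorem by comparing Lemma \ref{lem:6.4} with the weaker form of Conjecture \ref{conj:HII} established in Theorem \ref{thm:1.1}(l) and recorded as \eqref{eq:6.8}. Equating the two expressions for $\textup{d} \mu_{Pl}(I_P^G(t \otimes \pi_M))$ and cancelling the common factor $\dim(\rho_M) |S_{\phi_M}^\sharp|^{-1} \textup{d}\mc O$, one obtains the identity
\[
\pm m^M_{\mf s}(t t_M) \, \gamma(0, \mr{Ad}_{M^\vee} \circ t \phi_M, \psi)
= \pm C_{\mc O} \, \gamma(0, \mr{Ad}_{G^\vee, M^\vee} \circ t\phi_M, \psi)
\]
for all $t \in X_\unr(M)$. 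The problem thereby reduces to showing $C_{\mc O} = 1$.

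First I would invoke the $\mb W_K$-equivariant decomposition
\[
\mr{Lie}(G^\vee) / \mr{Lie}\big(Z(M^\vee)^{\mb W_K}\big) \cong
\mr{Lie}(M^\vee) / \mr{Lie}\big(Z(M^\vee)^{\mb W_K}\big) \oplus \mr{Lie}(G^\vee) / \mr{Lie}(M^\vee)
\]
and the multiplicativity of adjoint $\gamma$-factors in short exact sequences of representations to rewrite the identity as
\[
m^M_{\mf s}(t t_M) = \pm C_{\mc O} \, \gamma \big( 0, (\mr{Lie}(G^\vee)/\mr{Lie}(M^\vee)) \circ t\phi_M, \psi \big) .
\]
Both sides are rational functions on the torus $T_{\mf f}$ (respectively $Z(M^\vee)^{\mb W_K,\circ}$), and the point is to identify them factor by factor.

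On the Hecke-algebra side, $m^M_{\mf s}$ is an explicit product over the roots in $R_{\mf f} \setminus R_{M,\mf f}$ with parameters $q^{\mc N}(s_\alpha)$, see \cite[(3.57)]{Opd-Sp} and \cite[(2.17)]{Opd18}. On the Galois side, the appendix (Lemma \ref{lem:A.4} and Theorem \ref{thm:A.2}) gives an analogous product expansion of the adjoint $\gamma$-factor, now indexed by $\mb W_K$-orbits of roots of $(G^\vee, T^\vee)$ that do not come from $M^\vee$. The matching between these two sets of roots and the matching between their respective parameters (Hecke labels versus sizes of inertia-orbits and $\Fr$-twists) is forced by the construction of the type $(\hat P_{\mf f},\hat \sigma)$ in \cite{LusUni1,Mor2} and is the subject of \cite[\S 3.1.1]{Opd2}. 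Combined with the already-proven absolute identity for supercuspidal unipotent representations of Levi subgroups (Theorem \ref{thm:4.3} applied to $\pi_M$), this forces $C_{\mc O}=1$.

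The main obstacle is the last matching: spelling out that every factor of $m^M_{\mf s}$ at $tt_M$ is, up to a universal sign, the $\gamma$-factor of a single $\mb W_K$-orbit in $\mr{Lie}(G^\vee)/\mr{Lie}(M^\vee)$ with respect to the twisted parameter $t\phi_M$. The bookkeeping here is delicate because the contribution of ramified roots, and of those $\alpha$ whose $\mb I_K$-orbit has size $>1$, must be read off correctly from both the affine Hecke algebra parameters $q^{\mc N}(s_\alpha)$ and the L-factor of the symmetric algebra in the appendix. Once this bijection of factors is in place, the identity reduces to the absolute HII formula applied \emph{rootwise}, and the rationality/positivity of $C_{\mc O}$ finishes the argument.
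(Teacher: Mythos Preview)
Your reduction to the identity
\[
m^M_{\mf s}(t t_M) \;=\; \pm\, C_{\mc O}\,\gamma\!\big(0,\mr{Ad}_{G^\vee}|_{\hat{\mf g}/\hat{\mf m}}\circ t\phi_M,\psi\big)
\]
is correct, and by Lemma~\ref{lem:A.3} the right-hand side is $\pm C_{\mc O}\, m^{M^*}(t r_M)$, which is exactly the paper's \eqref{eq:6.9}. The divergence is in how you propose to extract $C_{\mc O}=1$.

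Your plan is to match the product $m^M_{\mf s}$ over $R^{\mf s}_0\setminus R^{\mf s}_{M,0}$ against the product $m^{M^*}$ over $(\Phi\setminus\Phi_{M})/\theta$ ``rootwise''. This does not work in general: the root system $R^{\mf s}_0$ attached to the type $(\hat P_{\mf f},\hat\sigma)$ is typically \emph{not} the root system $\Phi_\theta$ of $(G^\vee,T^\vee)$. For non-Iwahori unipotent types the two systems differ in rank, in type, and in the number of roots, and the parameters $m_\pm^{\mf s}(\gamma_a)$ differ from $m_\pm(\gamma_a)$; compare \eqref{eq:6.10} with \eqref{eq:A.33}. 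The content of the spectral transfer morphisms in \cite{Opd1,Opd2} is precisely that these two \emph{different} products define the same rational function up to a multiplicative constant --- not that their factors can be put in bijection. So appealing to \cite[\S 3.1.1]{Opd2} only reproduces \eqref{eq:6.9} with an unknown constant; it does not pin down $C_{\mc O}$. Invoking Theorem~\ref{thm:4.3} for $\pi_M$ does not help either: that input is already built into Lemma~\ref{lem:6.4}, hence into both sides of the comparison.

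The paper's device for determining $C_{\mc O}$ is different and avoids any root-by-root bookkeeping. One regards \eqref{eq:6.9} as an identity of rational functions in $t$ \emph{and in $q$}, using that the central characters $t_M$ and $r_M$ are residual and hence depend on $q$ in a controlled way (their root values are roots of unity times integral powers of $q$). For generic $t$, the explicit formulas \eqref{eq:6.10} and \eqref{eq:A.33} show
\[
\lim_{q\to 1} m^M_{\mf s}(t t_M)=1
\qquad\text{and}\qquad
\lim_{q\to 1} m^{M^*}(t r_M)=1,
\]
so \eqref{eq:6.9} forces $C_{\mc O}=1$. This $q\to 1$ limit is the missing idea in your proposal.
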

\begin{proof}
We may assume that $\mc M$ is a standard Levi subgroup, that is, $\mc M$ contains the
standard maximal $K$-split torus $\mc S$ and the standard maximal $K$-torus $\mc T$.
Let $\mc G^*$ be the quasi-split inner form of $\mc G$. We may identify $\mc T$ with a
maximal $K$-torus of $\mc G^*$. Let $\mc M^* \subset \mc G^*$ be the Levi subgroup such that 
$\Phi (\mc M^*,\mc T) = \Phi (\mc M,\mc T)$. 

Write inf.ch.$(\phi_M) = r_M \theta$ with $r_M \in \hat{T}^{\circ,\theta}$ (which can
be achieved by replacing $\phi_M$ with an equivalent L-parameter). 
By Lemma \ref{lem:A.3} and \eqref{eq:A.35}
\begin{equation}\label{eq:6.6}
\gamma (0, \mr{Ad}_{G^\vee,M^\vee} \circ t \phi_M, \psi) =
\pm \gamma (0,\mr{Ad}_{M^\vee} \circ t \phi_M,\psi) \, m^{M^*} (t r_M) .
\end{equation}
Comparing Lemma \ref{lem:6.4}, \eqref{eq:6.8} and \eqref{eq:6.6}, we see that
\begin{equation}\label{eq:6.9}
m^M_{\mf s} (t t_M) = \pm C_{\mc O} \, m^{M^*}(t r_M) \qquad \forall t \in X_\unr (M) .
\end{equation}
Let $R^{\mf s}_0$ denote the root system associated with $\mc{H}$, and let 
$q_{\mf s}$ denote the parameter function of $\mc{H}$, as in \cite[Section 2]{Opd2}. 
Let $m^{\mf s}_\pm$ be the corresponding parameter functions on $R^{\mf s}_0$. 
Let $w_{\mf s}^M\in W_{\mf s}$ denote the shortest length representative of 
the coset $w_{\mf s,0} W_{\mf s,M}$ in $W_{\mf s} / W_{\mf s,M}$ of the longest element 
$w_{\mf s,0}$. Like in Appendix \ref{sec:A2}, these parameters can be used to define 
$\mu$-functions. From \cite[Proposition 3.27(ii)]{Opd-Sp} we see that for $t \in X_\unr (M)$:
\begin{equation}\label{eq:6.10}
m^M_{\mf s}(t t_M) = q^{-1}_{\mf s} (w_{\mf s}^M) \prod_{a \in R^\mb{s}_0\backslash R^{\mf s}_{M,0}} 
\frac{(1-\gamma_a^{-2}(t t_M))}{(1+q^{-m_{-}^{\mf s}(\gamma_a)}\gamma_a^{-1}(t t_M))
(1-q^{-m_{+}^{\mf s}(\gamma_a)}\gamma_a^{-1}(t t_M))} .
\end{equation}
This is analogous to the formula \eqref{eq:A.33} for $m^{M^*}(t r_M)$. The differences
are that for $M^*$ the product runs over more roots and that the parameters 
$m_\pm^{\mf s} (\gamma_a)$ need not equal $m_\pm (\gamma_a)$. 

Since $\pi_M$ is essentially square-integrable, $\pi_{M,\mc H}$ is essentially discrete series. 
Together with \cite[Lemma 3.31 and Proposition A.4]{Opd-Sp} this implies that $X_\nr (M) t_M$ 
is a residual coset (of minimal dimension) for $\mc H^M$. Moreover $m_{\pm}^{\mf s}(\gamma_a) 
\in \Z$, so the value $\gamma_a (t_M) \in \C^\times$ is a root of unity times an integral power 
of $q$ \cite[Theorem A.7]{Opd-Sp}. In particular $\lim_{q \to 1} \gamma_a (t_M)$ is well-defined,
and a root of unity in $\C$.

The discrete unramified L-parameter for $M^*$ determines an L-packet
of essentially square-integrable $M^*$-representations. The Iwahori-spherical members of that
packet correspond to a finite set of essentially discrete series representations of the parabolic
subalgebra $\mc H (M^*, I^* \cap M^*)$ of $\mc H (G^*,I^*)$, with central character 
\[
W(M^\vee,T^\vee)^\theta r_M \in T^\vee_\theta / W(M^\vee,T^\vee)^\theta .
\]
As above for $t_M$, $X_\nr (M^*) r_M$ is a residual coset (of minimal dimension) for \\
$\mc H (M^*, I^* \cap M^*)$. It follows as above that
the values $\gamma_a (r_M)$, with $a \in (\Phi \setminus \Phi_{M^*}) / \theta$ as in 
\eqref{eq:A.33}, are products of roots of unity and integral powers of $q$.

Taking this dependence of $t_M$ and $r_M$ on $q$ into account, we regard both sides of 
\eqref{eq:6.9} as rational functions in $t$ and in $q$.
Fix $t \in X_\unr (M)$ such that both $t t_M$ and $t r_M$ are in generic position with 
respect to all the involved roots. Then \eqref{eq:6.10} and \eqref{eq:A.33} entail
\[
\lim_{q \to 1} m^M_\mf {s} (t t_M) = 1 \qquad \text{and} \qquad \lim_{q \to 1} m^{M^*}(t r_M) = 1 .
\]
Combining that with \eqref{eq:6.9}, we find $c_{\mc O} = 1$ and $m^M_{\mf s} (t t_M) = 
\pm m^{M^*}(t r_M)$ for all $t \in X_\unr (M)$. Then \eqref{eq:6.6} becomes
\begin{equation*}
\gamma (0, \mr{Ad}_{G^\vee,M^\vee} \circ t \phi_M, \psi) = \pm m^M (t t_M)
\gamma (0,\mr{Ad}_{M^\vee} \circ t \phi_M,\psi) .
\end{equation*}
Hence the expression in Lemma \ref{lem:6.4} equals \eqref{eq:6.8} 
with $c_{\mc O} = 1$, as required.
\end{proof}

\appendix
\section{Adjoint $\gamma$-factors}

Let $(\rho,V)$ be a finite dimensional Weil--Deligne representation over $\C$,
that is, a semisimple representation of $\mb W_K$ on $V$ together with a nilpotent 
operator $N \in \mr{End}_\C (V)$, such that 
\[
\rho (w) N \rho (w)^{-1} = \| w \| N \quad \text{for all } w \in \mb W_K .
\]
The contragredient of $(\rho,V)$ is the contragredient $(\rho^\vee,V^\vee)$ as 
$\mb W_K$-representation, together with the nilpotent operator 
$N^\vee \in \mr{End}_\C (V^\vee)$ which sends $\lambda$ to $-\lambda \circ N$. 
We write $V_N = \ker (N)$ and we fix an additive character $\psi : K \to \C^\times$.
Recall from \cite[\S 4.1.6]{Tat} that the local factors of $(\rho,V)$ are defined, 
as meromorphic functions of $s \in \C$, by:
\begin{equation}\label{eq:A.1}
\begin{aligned}
& L(s,\rho) = \det \big( 1 - q^{-s} \rho (\Fr) | V_N^{\mb I_K} \big)^{-1} , \\
& \epsilon (s,\rho,\psi) = \epsilon (s,\rho_0,\psi) 
\det \big( -q^{-s} \rho (\Fr) | V^{\mb I_K} / V_N^{\mb I_K} \big) ,\\
& \gamma (s,\rho,\psi) = \epsilon (s,\rho,\psi) L(1-s,\rho^\vee) L(s,\rho)^{-1} .
\end{aligned}
\end{equation}
The $\epsilon$-factor can be described further in terms of \cite[Theorem 3.4.1]{Tat} and
the Artin conductor $a(V)$:
\[
\epsilon (s,\rho_0,\psi) = \epsilon (\rho_0 \otimes \| \, \|^{1/2},\psi) q^{a(V) (1/2 - s)} .
\]
It is well-known, for instance from \cite[Proposition 2.2]{GrRe}, that $\rho$ gives rise
to a semisimple representation 
\begin{equation}\label{eq:A.13}
\tilde \rho : \mb W_K \times SL_2 (\C) \to \mr{Aut}_\C (V) \text{, such that }
N = \textup{d}\tilde \rho |_{SL_2 (\C)} \matje{0}{1}{0}{0}.
\end{equation}
Such a $\tilde \rho$ is unique up to conjugacy in $\mr{Aut}_\C (V)$, and it determines $\rho$.
We say that $(\rho,V)$ is self-dual if it is isomorphic to its contragredient. This 
is equivalent to self-duality of $\tilde \rho$.

\subsection{Independence of the nilpotent operator} \
\label{sec:A1}

We define a new Weil--Deligne representation $(\rho_0,V)$, by decreeing that as 
$\mb W_K$-representation it is the same as $(\rho,V)$, but with nilpotent operator
$N_0 = 0$. In view of the known properties of $\gamma$-factors for representations of
$GL_n (K)$ \cite[(2.7.3)]{Jac}, one can expect a relation between the $\gamma$-factors
of $\rho$ and of $\rho_0$.

\begin{prop}\label{prop:A.1}
Let $(\rho,V)$ be a finite dimensional self-dual Weil--Deligne representation over $\C$.
Then
\[
\gamma (0,\rho,\psi) = \pm \gamma (0,\rho_0,\psi) . 
\]
That is, up to a sign the $\gamma$-factor of $\rho$ at $s=0$ does not depend on the 
nilpotent operator $N$.
\end{prop}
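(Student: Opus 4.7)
The plan is to reduce to the irreducible $SL_2(\C)$-isotypic components of $V$, using the Jacobson--Morozov reformulation $\tilde\rho$ from \eqref{eq:A.13}. Decompose
\[
V \cong \bigoplus_{n \geq 0} W_n \otimes \mathrm{Sym}^n(\C^2)
\]
as a $\mb W_K \times SL_2(\C)$-module, where $W_n = \mr{Hom}_{SL_2}(\mathrm{Sym}^n, V)$ is the multiplicity space carrying a $\mb W_K$-action. Self-duality of $\tilde\rho$ (equivalent to that of $\rho$) induces a self-duality $W_n \cong W_n^\vee$ for each $n$. Since $L$-, $\epsilon$- and $\gamma$-factors are multiplicative under direct sums, the assertion reduces to the corresponding identity for each summand $V_n := W_n \otimes \mathrm{Sym}^n$. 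The case $n = 0$ is trivial since $N \equiv 0$ on $V_0$; so fix $n \geq 1$.

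Next I would unravel the data in \eqref{eq:A.1} for $V_n$. Let $F_n$ denote the action of $\Fr$ on $W_n^{\mb I_K}$. Three key computations are needed: (i) $V_{n,N}^{\mb I_K} \cong W_n^{\mb I_K}$ is the highest-weight line, and $\Fr$ acts there as $q^{n/2}F_n$, so $L(s, V_n) = L(s - n/2, W_n)$; (ii) the quotient $V_n^{\mb I_K}/V_{n,N}^{\mb I_K}$ decomposes as $\bigoplus_{j=-n/2}^{n/2-1} W_n^{\mb I_K}$ with $\Fr$ acting as $q^j F_n$ on the summand of weight $2j$, so that
\[
\epsilon(s, V_n, \psi) = \epsilon(s, V_{n,0}, \psi) \prod_{j = -n/2}^{n/2-1} \det\bigl(-q^{j-s}F_n \,\bigl|\, W_n^{\mb I_K}\bigr);
\]
and (iii) the semisimplification $V_{n,0}$, as a pure $\mb W_K$-module, is $\bigoplus_{j=-n/2}^{n/2} W_n \otimes |\cdot|^{-j}$, so $\gamma(s, V_{n,0}, \psi) = \prod_j \gamma(s - j, W_n, \psi)$.

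Plugging these into the definition of $\gamma(s, V_n, \psi) = \epsilon(s, V_n, \psi)\, L(1-s, V_n^\vee)/L(s, V_n)$ and using $V_n^\vee \cong V_n$ to replace $L(1-s, V_n^\vee)$ by $L(1-s-n/2, W_n)$, the ratio $\gamma(0, V_n, \psi)/\gamma(0, V_{n,0}, \psi)$ becomes an explicit product of $\det$-correction terms divided by $\prod_{j} \gamma(-j, W_n, \psi)$ (times a single factor coming from the $L(1-n/2, W_n)/L(-n/2, W_n)$ part). The local functional equation for the self-dual $\mb W_K$-module $W_n$, namely $\gamma(s, W_n, \psi)\gamma(1-s, W_n, \psi) = 1$, allows one to pair up the factors $\gamma(-j, W_n, \psi)$ and $\gamma(1 + j, W_n, \psi)$ in this product; after such telescoping, combined with the fact that the eigenvalues of $F_n$ occur in reciprocal pairs (so that $\det(F_n \mid W_n^{\mb I_K}) = \pm 1$), the surviving terms are the $\det$-corrections from the $\epsilon$-factor, and these should collapse to $\pm 1$.

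The main obstacle will be the boundary bookkeeping in this last step: the telescoping inherited from the functional equation leaves two "unpaired" weights ($j = -n/2$ and $j = -n/2+1$) on the $L$-factor side, and these leftover $\gamma(\cdot, W_n, \psi)$ factors must cancel exactly against the $\det$-correction coming from the $\epsilon$-factor ratio. To handle this cleanly I expect to interpret both sides as meromorphic functions in $s$ and to verify the identity on that level before specializing at $s = 0$, so that apparent poles or zeros in individual factors do not obstruct the argument. This reduces the proposition to a bounded combinatorial identity in powers of $q$ and eigenvalues of $F_n$, to be verified by direct manipulation using the reciprocal-pair structure on $\mr{Spec}(F_n)$ given by self-duality.
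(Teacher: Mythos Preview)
Your approach is essentially that of the paper: decompose $V$ into $SL_2(\C)$-isotypic pieces $W_n \otimes \mathrm{Sym}^n$, use that self-duality descends to each multiplicity space $W_n$, and telescope the ratio $\gamma/\gamma_0$ using the symmetry of weights and of the Frobenius eigenvalues. The paper organizes the telescoping slightly differently---it writes the ratio directly as a double product over unramified characters $\chi$ of $\mb W_K/\mb I_K$ and weights $k=1,\ldots,n$, then cancels in $k$ as $s\to 0$---rather than invoking the functional equation $\gamma(s,W_n,\psi)\gamma(1-s,W_n,\psi)=1$, but the underlying cancellation is the same, and your boundary bookkeeping corresponds exactly to the paper's leftover terms $k=1$ and $k=n$ in \eqref{eq:A.10}.

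One slip to fix: with the convention $N = \textup{d}\tilde\rho\,\matje{0}{1}{0}{0}$ from \eqref{eq:A.13}, the kernel $\mathrm{Sym}^n_N$ is the highest-weight line on which $\matje{q^{-1/2}}{0}{0}{q^{1/2}}$ acts by $q^{-n/2}$, not $q^{n/2}$ (see \eqref{eq:A.7}). Hence $L(s,V_n)=L(s+n/2,W_n)$, and the shifts in your items (i)--(iii) should be corrected accordingly. This does not affect the structure of the argument.
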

\begin{proof}
Let $(\rho',V')$ be the sum of the irreducible nontrivial $\mb I_K$-subrepresentations
of $(\rho,V)$. We denote the irreducible $SL_2 (\C)$-representation of dimension $n+1$
by $(\sigma_n,\mr{Sym}^n)$ and we write
\[
V_n := \Hom_{\mb I_K \times SL_2 (\C)} (\mr{triv} \otimes \sigma_n, \tilde \rho).
\]
We can decompose the $\mb W_K \times SL_2 (\C)$-representation $\tilde \rho$ as
\begin{equation}\label{eq:A.2}
V = V' \oplus \bigoplus\nolimits_{n=0}^\infty V_n \otimes \mr{Sym}^n . 
\end{equation}
In view of the additivity of the local factors \eqref{eq:A.1}, it suffices to prove the
proposition for each of the direct summands in \eqref{eq:A.2} separately. It follows 
directly from the definitions that
\[
\gamma (s,\rho',\psi) = \epsilon (s, \rho'_0,\psi) = \gamma (s,\rho'_0,\psi) ,
\]
so we only have to consider $V_n \otimes \mr{Sym}^n$ for a fixed (but arbitrary) 
$n \in \Z_{\geq 0}$. Since $\mb I_K$ is normal in $\mb W_K$, $\tilde \rho$ induces an 
action of $\mb W_K / \mb I_K \cong \Z$ on $V_n$. We decompose it \nolinebreak as 
\begin{equation}
V_n = \bigoplus\nolimits_{\chi \in \Irr (\mb W_K / \mb I_K)} \C_\chi^{m_\chi} ,  
\end{equation}
where $m_\chi$ denotes the multiplicity of $\chi$ in $V_n$.

Since $(\tilde \rho,V)$ is self-dual and $\mr{Sym}^n$ is self-dual (as 
$SL_2 (\C)$-representation), the $\mb W_K$-representation $V_n$ is also self-dual. Hence 
\begin{equation}\label{eq:A.3}
m_{\chi^{-1}} = m_\chi \quad \text{for all } \chi \in \Irr (\mb W_K / \mb I_K).
\end{equation}
To simplify the notation, we assume from now on that $V = V_n \otimes \mr{Sym}^n$,
and in particular that $V^{\mb I_K} = V$. The relation between $\rho$ and $\tilde \rho$
entails that
\begin{equation}\label{eq:A.4}
\rho (\Fr) = \tilde \rho (\Fr) \otimes \tilde \rho \big( 1, \matje{q^{-1/2}}{0}{0}{q^{1/2}} \big)
= \tilde \rho (\Fr) \otimes \sigma_n \matje{q^{-1/2}}{0}{0}{q^{1/2}} .
\end{equation}
From \eqref{eq:A.3} we see that
\begin{equation}\label{eq:A.5}
\det (\tilde \rho (\Fr) | V_n ) = 
\prod\nolimits_{\chi \in \Irr (\mb W_K / \mb I_K)}  \chi (\Fr)^{m_\chi} = (-1)^{m_{\chi_-}} ,
\end{equation}
where $\chi_-$ denotes the unique quadratic character of $\mb W_K / \mb I_K$. As
$\sigma_n (SL_2 (\C)) \subset SL_{n+1}(\C)$, \eqref{eq:A.4} and \eqref{eq:A.5} yield
\begin{equation}\label{eq:A.6}
\det (\rho (\Fr) | V) = \det (\tilde \rho (\Fr) | V_n )^{n+1} = (-1)^{(n+1) m_{\chi_-}} .
\end{equation}
Since $\mr{Sym}^n_N$ is one-dimensional with $\matje{q^{-1/2}}{0}{0}{q^{1/2}}$ acting as 
$q^{-n/2}$, 
\begin{equation}\label{eq:A.7}
\det \Big( \sigma_n \matje{q^{-1/2}}{0}{0}{q^{1/2}} \big| \mr{Sym}^n / \mr{Sym}^n_N \Big) = q^{n/2} .
\end{equation}
With \eqref{eq:A.4}--\eqref{eq:A.7}, we can express the $\epsilon$-factor as
\begin{align}
\nonumber \epsilon (s,\rho,V) & = \epsilon (s,\rho_0,\psi) 
\det \big( -q^{-s} \rho (\Fr) | V_n \otimes \mr{Sym}^n / \mr{Sym}^n_N \big) \\
\label{eq:A.8} & 
= \epsilon (s,\rho_0,\psi) (-q^{-s})^{\dim (V_n) n} (-1)^{n m_{\chi_-}} q^{\dim (V_n) n /2} .
\end{align}
Using self-duality, the definitions \eqref{eq:A.1} and \eqref{eq:A.8}, we compute
\begin{equation}\label{eq:A.9}
\begin{aligned}
\frac{\gamma (s,\rho,\psi)}{\gamma (s,\rho_0,\psi)} & =
\frac{(-1)^{n m_{\chi_-}}q^{\dim (V_n) n /2}}{ (-q^s )^{\dim (V_n) n}}
\frac{\det \big( 1 - q^{s-1} \rho (\Fr) | V_n \otimes \mr{Sym}^n / \mr{Sym}^n_N  \big)}{
\det \big( 1 - q^{-s} \rho (\Fr) | V_n \otimes \mr{Sym}^n / \mr{Sym}^n_N  \big)} \\
& = \frac{(-1)^{n m_{\chi_-}}q^{\dim (V_n) n /2}}{ (-q^s )^{\dim (V_n) n}}
\prod_{\chi} \prod_{k=1}^n \left( \frac{1 - q^{s-1} \chi (\Fr) q^{k-n/2}}{
1 - q^{-s} \chi (\Fr) q^{k - n/2}} \right)^{m_\chi} \\
& = \frac{(-1)^{n m_{\chi_-}}q^{\dim (V_n) n /2}}{ (-1 )^{\dim (V_n) n}}
\prod_{\chi} \prod_{k=1}^n \left( \frac{1 - q^s \chi (\Fr) q^{k-1-n/2}}{
q^s - \chi (\Fr) q^{k - n/2}} \right)^{m_\chi} .
\end{aligned}
\end{equation}
When $s$ goes to 0, the products over $k$ in \eqref{eq:A.9} attain telescopic behaviour,
and all terms in the numerator (except $k=1$) cancel against all terms in the denominator
(except $k=n$). This is obvious when $\chi (\Fr) q^{k-n/2} \neq 1$, while we pick up an
extra factor $-1$ if $\chi (\Fr) q^{k-n/2} = 1$. Collecting all factors $-1$ in one
symbol $\pm$, \eqref{eq:A.9} yields
\begin{equation}\label{eq:A.10}
\begin{aligned}
\frac{\gamma (0,\rho,\psi)}{\gamma (0,\rho_0,\psi)} & = \pm q^{\dim (V_n) n /2}
\lim_{s \to 0} \prod_{\chi} \left( \frac{1 - q^s \chi (\Fr) q^{-n/2}}{
q^s - \chi (\Fr) q^{n/2}} \right)^{m_\chi} \\
& = \pm \lim_{s \to 0} \prod_{\chi} \chi (\Fr)^{m_\chi} \left( \frac{q^{n/2} 
\chi(\Fr)^{-1} - q^s}{q^s - \chi (\Fr) q^{n/2}} \right)^{m_\chi} 
\end{aligned}
\end{equation}
In view of \eqref{eq:A.5}, all the terms $\chi (\Fr)^{m_\chi}$ together just contribute
a sign, so we may omit them (or rather, put them into $\pm$). When $\chi (\Fr) \neq q^{\pm n/2}$,
\eqref{eq:A.3} shows that the terms in \eqref{eq:A.10} associated to $\chi$ will cancel
against the terms associated to $\chi^{-1}$, up to a sign. Thus only the characters 
$\chi^{\pm 1}$ with $\chi (\Fr) = q^{n/2}$ remain in \eqref{eq:A.9} upon taking the limit 
$s \to 0$, and for those we compute:
\begin{equation}
\frac{\gamma (0,\rho,\psi)}{\gamma (0,\rho_0,\psi)} = 
\pm \lim_{s \to 0} \left( \frac{1 - q^s}{q^s - q^n} \right)^{m_\chi} 
\left( \frac{q^n - q^s}{q^s - 1} \right)^{m_\chi} = \pm (-1)^{2 m_\chi} = \pm 1 .
\end{equation}
This concludes the proof, and we note that by retracing the various steps one can
find an explicit (but involved) formula for the sign.
\end{proof}

The adjoint $\gamma$-factor of a L-parameter $\phi$ for $G = \mc G (K)$ comes from a 
Weil--Deligne representation on $\mr{Lie}(G^\vee) / \mr{Lie}(Z(G^\vee)^{\mb W_K})$, which is 
self-dual with respect to the Killing form \cite[\S 3.2]{GrRe}. Proposition \ref{prop:A.1}
says that the $\gamma$-factor of $\tilde \rho = \mr{Ad}_{G^\vee} \circ \phi$ equals the 
$\gamma$-factor of $\rho_0$ (both at $s=0$ and up to a sign). We note that 
\begin{equation}\label{eq:respt}
\rho_0 (\Fr) = 
\mr{Ad}_{G^\vee} \Big( \phi \big( \Fr, \matje{q^{-1/2}}{0}{0}{q^{1/2}} \big) \Big) ,
\end{equation}
where we recognize the right hand side as the adjoint representation $\mr{Ad}_{G^\vee}$ 
applied to the infinitesimal character of $\phi$. In these terms, Proposition \ref{prop:A.1} 
says that \\ $\gamma ( 0,\mr{Ad}_{G^\vee} \circ \phi, \psi)$
depends only on $\phi |_{\mb I_K}$ and the infinitesimal character of $\phi$.

\subsection{Relation with $\mu$-functions} \
\label{sec:A2}

The goal of this paragraph to relate adjoint $\gamma$-factors of unramified L-parameters
to $\mu$-functions for Iwahori--Hecke algebras. The desired equality was already claimed in
\cite[(38)]{Opd2}, we take this opportunity to work out the proof.

From now we assume that $\mathcal{G}$ is unramified over $K$, that is, $\mc G$ is quasi-split and
splits over an unramified extension of $K$. Fix a pinning of the Lie algebra $\mr{Lie}(G^\vee)$ 
and let $\theta$ denote the pinned automorphism of $\mr{Lie}(G^\vee)$ induced by $\textup{Frob}$. 
The quotient $\mathcal{G}/Z(\mathcal{G})_s$ defines 
a $\theta$-stable reductive subgroup $\hat{G}\subset G^\vee$ with Lie algebra 
\[
\hat{\mf g} := \mr{Lie}(\hat{G}) \cong \mr{Lie}(G^\vee) / \mr{Lie}(Z(G^\vee)^\theta). 
\]
Let $\mr{Ad}_{G^\vee}$ denote the adjoint action of ${}^L\mathcal{G}$ on $ \hat{\mf g}$. 
Let us denote the distinguished Cartan subgroup of $\hat{G}$ by $\hat{T}$, 
with corresponding Cartan subalgebra $\hat{\mf t}:=\mr{Lie}(\hat{T})$ of 
$\hat{\mf g}$. Clearly 
\begin{equation}\label{eq:A.12}
T^\vee = Z(G^\vee)^{\theta}\hat{T} = Z(G^\vee)^{\theta,\circ}\hat{T} ,
\end{equation}
where $Z(G^\vee)^{\theta,\circ}$ is the identity component of $Z(G^\vee)^{\theta}$.
By \cite[Section 3.3]{Re}, the Lie algebra $\hat{\mf g}^\theta$ is semisimple. 

\begin{lem}\label{lem:A.4}
Let $\psi_0 : K \to \C^\times$ be a character of order 0. Let $\phi_T$ be an unramified
L-parameter for $T$ and write $\phi_T (\Fr) = r \theta$. Then
\[
\gamma (s, \mr{Ad}_{G^\vee} |_{\hat{\mf t}} \circ \phi_T, \psi_0) =
\frac{\det(1-q^{-s} \mr{Ad}_{G^\vee}(r\theta)|_{\hat{\mf t}})}
{\det(1-q^{s-1} \mr{Ad}_{G^\vee}(r\theta)|_{\hat{\mf t}})} .
\]
For $s$ near 0 this can be expressed as
\[
\gamma (s, \mr{Ad}_{G^\vee} |_{\hat{\mf t}} \circ \phi_T, \psi_0) =
 s^{|\Delta / \theta|} \frac{n_1 \log (q)^{|\Delta / \theta|}
\prod\nolimits_{a \in \Delta / \theta} |a \cap \Delta|}
{\det(1-q^{-1} \mr{Ad}_{G^\vee}(r\theta)|_{\hat{\mf t}})} + \mc O (s^{|\Delta / \theta| + 1}) .
\]
Here $n_1$ is a positive integer which reduces to 1 if $Z(\mc G)^\circ$ is $K$-split.
\end{lem}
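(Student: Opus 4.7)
The approach is to compute the $\gamma$-factor directly from the definition in \eqref{eq:A.1}, exploiting three special features: $\phi_T$ is unramified, its target $T^\vee$ is abelian, and the Frobenius image $r\theta$ has $r$ in the Cartan.

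First I will show that $\epsilon(s,\mr{Ad}_{G^\vee}|_{\hat{\mf t}} \circ \phi_T,\psi_0)=1$. Because $T^\vee$ is abelian, every algebraic homomorphism $SL_2(\C)\to T^\vee$ is trivial, so $\phi_T|_{SL_2(\C)}=1$ and the nilpotent operator $N$ attached to the Weil--Deligne representation on $V=\hat{\mf t}$ vanishes. Together with $\phi_T|_{\mb I_K}=1$, this yields $V^{\mb I_K}=V_N^{\mb I_K}=V$ and Artin conductor $a(V)=0$. The standard computation of $\epsilon$-factors for an unramified representation paired with an additive character of order zero then gives $\epsilon(s,\cdot,\psi_0)=1$. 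Consequently
\[
\gamma(s,\mr{Ad}_{G^\vee}|_{\hat{\mf t}} \circ \phi_T,\psi_0) = \frac{L(1-s,\rho^\vee)}{L(s,\rho)} = \frac{\det(1-q^{-s}A)}{\det(1-q^{s-1}A^{-T})},
\]
where $A:=\mr{Ad}_{G^\vee}(r\theta)|_{\hat{\mf t}}$.

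Next I identify $A$. Since $r\in\hat T\subset T^\vee$ sits in the Cartan, $\mr{Ad}(r)$ acts trivially on $\hat{\mf t}$, hence $A=\theta|_{\hat{\mf t}}$. Because $\theta$ is a real finite-order operator on $\hat{\mf t}_\R = X_*(\hat T)\otimes\R$, its complex eigenvalues are roots of unity and come in complex-conjugate (equivalently inverse) pairs. The multisets of eigenvalues of $A$ and $A^{-T}$ therefore coincide, which gives $\det(1-q^{s-1}A^{-T})=\det(1-q^{s-1}A)$ and establishes the first formula of the lemma.

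For the Taylor expansion at $s=0$, I expand $\det(1-q^{-s}A)=\prod_i(1-q^{-s}\lambda_i)$, use $1-q^{-s}=s\log q+O(s^2)$ on the factors with $\lambda_i=1$, and note that factors with $\lambda_i\neq 1$ contribute $(1-\lambda_i)(1+O(s))$. The multiplicity of eigenvalue $1$ equals $\dim\hat{\mf t}^\theta$; by the cited result of Reeder this is $|\Delta/\theta|$, the rank of the semisimple algebra $\hat{\mf g}^\theta$. To extract the residual product $\prod_{\lambda_i\neq 1}(1-\lambda_i)$, I will decompose $\hat{\mf t}$ $\theta$-equivariantly as a ``coroot part'' spanned by the images of the simple coroots and a ``central part'' coming from $\mr{Lie}(Z(G^\vee))/\mr{Lie}(Z(G^\vee)^\theta)$. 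On the coroot part, the pinned automorphism $\theta$ permutes a natural basis by $\theta$-orbits $a\in\Delta/\theta$ of sizes $|a\cap\Delta|$, with eigenvalues on each orbit equal to the $|a\cap\Delta|$-th roots of unity; the classical identity $\prod_{k=1}^{m-1}(1-\zeta_m^k)=m$ then produces exactly $\prod_{a\in\Delta/\theta}|a\cap\Delta|$. The central part supplies the further positive integer $n_1$ through its own $\theta$-eigenvalues. When $Z(\mc G)^\circ$ is $K$-split, $\theta$ acts trivially on $Z(G^\vee)^\circ$, so $\mr{Lie}(Z(G^\vee))=\mr{Lie}(Z(G^\vee)^\theta)$, the central piece vanishes, and $n_1=1$. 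Finally the denominator at $s=0$ evaluates to $\det(1-q^{-1}A)$, completing the second formula.

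The main technical obstacle will be making the $\theta$-equivariant decomposition $\hat{\mf t}=(\text{coroot part})\oplus(\text{central part})$ precise and verifying that the central contribution $n_1$ is always a positive integer rather than merely a nonzero rational number; this will hinge on identifying the $\theta$-action on the central piece as a rational orthogonal operator of finite order on a real lattice.
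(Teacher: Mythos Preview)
Your proposal is correct and follows essentially the same route as the paper: the $\epsilon$-factor is dispatched via unramifiedness and the order-zero character, self-duality (which you phrase as eigenvalue pairing for the finite-order real operator $\theta|_{\hat{\mf t}}$, while the paper invokes the Killing form) reduces the $\gamma$-factor to the displayed ratio of determinants, and the Taylor expansion at $s=0$ is obtained from the same $\theta$-equivariant splitting $\hat{\mf t}=(1-\theta)Z(\hat{\mf g})\oplus(\hat{\mf t}\cap\hat{\mf g}_\der)$. Your flagged ``technical obstacle'' is not one: the decomposition is just the standard center--derived splitting of the Cartan, and the integrality and positivity of $n_1$ follow exactly as you indicate, since $\theta$ acts on the lattice $X^*(Z(\mc G)^\circ/Z(\mc G)_s)$ with finite order and no eigenvalue $1$.
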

\begin{proof}
For any unramified representation $\rho$ of $\mathbf{W}_K$ and an additive character $\psi_0$ 
of order 0, \cite[(3.2.6) and (3.4.2)]{Tat} say that
\begin{equation}\label{eq:A.14}
\epsilon (s,\rho,\psi_0) = 1 \qquad \text{for all } s \in \C .
\end{equation}
This applies to $\rho = \mr{Ad}_{G^\vee} |_{\hat{\mf t}} \circ \phi$, and moreover $\rho$ is 
self-dual with respect to the Killing form. Knowing that, the definitions \eqref{eq:A.1} yield 
the asserted formula for $\gamma (s, \mr{Ad}_{G^\vee} |_{\hat{\mf t}} \circ \phi_T, \psi_0)$.

The finite order map $\mr{Ad}_{G^\vee}(r\theta)|_{\hat{\mf t}}$ cannot have an eigenvalue
$q \in \R_{>1}$. Hence the denominator $\det(1-q^{s-1} \mr{Ad}_{G^\vee}(r\theta)|_{
\hat{\mf t}})$ is regular at $s=0$, and behaves as expected.

The numerator $\det(1-q^{-s} \mr{Ad}_{G^\vee}(r\theta)|_{\hat{\mf t}})$ can be analysed
by splitting 
\begin{equation}\label{eq:A.17}
\hat{\mf t} = (1-\theta)Z(\hat{\mf g}) \oplus ( \hat{\mf t} \cap \hat{\mf g}_\der ) .
\end{equation}
On the first summand of \eqref{eq:A.17} we get
\begin{equation}\label{eq:A.18}
\lim_{s \to 0} \det(1-q^{-s} \mr{Ad}_{G^\vee}(r\theta)|_{(1 - \theta) Z(\hat{\mf g})})
= \det (1 - \theta |_{(1 - \theta) Z(\hat{\mf g})} ) .
\end{equation}
Identifying $(1 - \theta) Z(\hat{\mf g})$ with the Lie algebra of the complex dual 
group of $Z(\mc G)^\circ / Z(\mc G)_s$, we see that \eqref{eq:A.18} can be computed as
the determinant of a linear transformation of a (co)character lattice, so in particular
it is an integer. More precisely, as $\theta$ has finite order but no eigenvalues 1 on
the involved lattice, \eqref{eq:A.18} equals the natural number
\[
n_1 := \det \big(1 - \theta |_{X^* (Z(\mc G)^\circ / Z(\mc G)_s)} \big) \in \N .
\]
If $Z(\mc G)^\circ$ is $K$-split, then $Z(\mc G)^\circ / Z(\mc G)_s = 1$ and $n_1 = 1$.

The basis of $\hat{\mf t} \cap \hat{\mf g}_\der$ consisting of the simple coroots
is permuted by $\theta$, with orbits of length $|a \cap \Delta|$. For the second summand
in \eqref{eq:A.17} we find a contribution of
\begin{equation}\label{eq:A.19}
\lim_{s \to 0} \det \big( 1-q^{-s} \mr{Ad}_{G^\vee}(r\theta)|_{\hat{\mf t} \cap 
\hat{\mf g}_\der} \big) =
\lim_{s \to 0} \prod\nolimits_{a \in \Delta / \theta} (1 - q^{-s |a \cap \Delta|}) .
\end{equation}
The leading order term of \eqref{eq:A.19} for $s$ near 0 is
\begin{equation}\label{eq:A.23}
\prod\nolimits_{a \in \Delta / \theta} \big( s \, |a \cap \Delta| \, \log (q) \big) =
s^{|\Delta / \theta|} \log (q)^{|\Delta / \theta|}
\prod\nolimits_{a \in \Delta / \theta} |a \cap \Delta| . \qedhere
\end{equation}
\end{proof}

We start the definition of the $\mu$-functions for the relevant Hecke algebras.
Let $\Phi/\theta$ be the set of equivalence classes in the root system $\Phi$ of 
$(\hat{\mf g}, \hat{\mf t})$ as defined in \cite[Section 3.3]{Re}, and $\Delta/\theta$ 
the set of equivalence classes of the basis $\Delta$ of $\Phi$. For each $a\in \Phi/\theta$ put 
\[
\gamma_a := \sum\nolimits_{\alpha\in a} \alpha|_{\hat{\mf t}^\theta}.
\]
Then $\Phi_\theta=\{\gamma_a\}_{a\in\Phi/\theta}$ is a reduced root system on 
$\hat{\mf t}^\theta$. With $\Phi_\theta$ we also consider its (untwisted) affine extension 
$\Phi_\theta^{(1)}= \Phi_\theta \times \Z$, naturally indexed by 
$\Phi/\theta\times\mathbb{Z}$, that is, we will denote the affine root $(\gamma_a,n)$  
with $a\in\Phi/\theta$ and $n\in \Z$ by $\gamma_{(a,n)}$. 

Recall the Kac root system $\hat\Phi_\theta=\{\beta_a\mid a\in\Phi/\theta\}$, where 
$\beta_a = \alpha |_{\hat{\mf t}^\theta}$ for an $\alpha \in a$ such that $\beta_a / 2$ is
not of this form. This root system has a twisted affine extension with ``Kac diagram'' 
$\mathcal{D}(\hat{\mf g},\theta)$ \cite[Section 3.4]{Re}. 
By \cite[Section 3]{Re}, $\hat\Phi_\theta$ is the root system of $\hat{\mf g}^\theta$.
For each $a\in \Phi/\theta\times\mathbb{Z}$ there exists a positive integer $f_a$ such that 
$\gamma_a = f_a \beta_a$. If $\gamma_a\in\Phi_\theta$ is a minimal root, then by construction 
$f_{(a,1)}$ is the order of $\theta$ on the union of the components of $\Phi$ which intersect $a$. 

We say that $a \in \Phi / \theta$ (or $\alpha \in a$) has:
\begin{itemize}
\item type I if the $\theta$-orbit of $\alpha$ consists of mutually orthogonal roots;
\item type II if $a$ contains a triple $\{\alpha_1, \alpha_2, \alpha_1 + \alpha_2\}$ 
with $\alpha_2 \in \langle \theta \rangle \alpha_1$.
\end{itemize}
Type II only occurs if some irreducible component of $\Phi$ has type $A_{2n}$ and a power of
$\theta$ acts on it by the nontrivial diagram automorphism.

From \cite[Table 2]{Re} we see that for every root of type I and every $e \in \Z$: 
\[
f_{(a,e)} = f_a = |a|.
\]
On the other hand, for $a\in\Phi/\theta$ of type II :
\[
f_{(a,e)} = \left\{ \begin{array}{cc} 
f_a = 4|a|/3 & \text{if } e \text{ is even,}\\
f_a /2 = 2|a|/3 & \text{if } e \text{ is odd.}
\end{array}\right.
\]
Recall that $\mathcal{S}\subset \mathcal{G}$ denotes a maximal $K$-split torus of $\mathcal{G}$
contained in $\mathcal{T}$.
Let $\Phi(\mathcal{G},\mathcal{S})_0$ (resp. $\Phi(\mathcal{G},\mathcal{S})_1$) be the set
of indivisible (resp. non-multipliable) roots of $\Phi(\mathcal{G},\mathcal{S})$.
From \cite[(26)]{Re} we conclude that $\Phi_\theta^\vee=\Phi(\mathcal{G},\mathcal{S})_0$
and $\Phi_\theta = \Phi(\mathcal{G},\mathcal{S})_0^\vee$.

Let $I\subset G$ be an Iwahori subgroup and let $\mathcal{H}(G, I)$ be the Iwahori--Hecke 
algebra of $G$. We write the underlying root datum as $(R_0,X_*(\mathcal{S}), 
R_0^\vee,X^*(\mathcal{S}))$ and the parameter functions on $R_0$ as $m_\pm$. 
Then $R_m$ (in the sense of \cite[Subsection 2.3.3]{Opd2}) is equal to 
$\Phi_\theta$ (cf. \cite[Section 4.2]{FeOp}) or equivalently, 
$R_0^\vee=\Phi(\mathcal{G},\mathcal{S})_1$. We identify the roots of $R_0$ with 
$\{\gamma_a\}_{a\in \Phi/\theta}$.


If $a \in \Phi / \theta$ is of type I, then $\rho_0(\textup{Frob})= \mr{Ad}_{G^\vee}(r\theta)$ 
the parameters of $\mathcal{H}(G,I)$ on $\Phi_\theta$ are given by:
\begin{equation}\label{eq:A.27}
\begin{aligned}
& m_{+}(\gamma_a) = f_a, \\
& m_{-}(\gamma_a) = 0 ,
\end{aligned} 
\end{equation}
while if $a\in\Phi/\theta$ is of type II, then these parameters are given by:
\begin{equation}\label{eq:A.28}
\begin{aligned}
& m_{+}(\gamma_a) = f_{(a,0)}/2 = 2 |a| / 3, \\
& m_{-}(\gamma_a) = f_{(a,1)}/2 = |a| / 3.
\end{aligned} 
\end{equation}
Another way of expressing this is that the linearly extended parameter function $m^\vee_R$ 
on the affine Kac roots $\mathcal{D}(\hat{\mf g},\theta)$ is constant and equal to $1$, 
see \cite[Proposition 4.2.1]{FeOp}.

For use in Paragraph \ref{par:par} we need a $\mu$-function of $\mc H (G,I)$ relative to 
a Levi subgroup (or equivalently, relative to a parabolic subalgebra). 
Let $\mc P$ be a standard parabolic $K$-subgroup of $\mc G$,
with standard Levi factor $\mc M$. Let $\Phi_M \subset \Phi$ be the corresponding 
parabolic root subsystem. We recall from \cite[(3.57) and (4.96)]{Opd-Sp} that
\begin{align}
\nonumber m^M (t) & =  q^{(\dim \hat{\mf g} - \dim \hat{\mf m}) / 2}
\prod_{a\in (\Phi \setminus \Phi_M) /\theta}  \frac{(\gamma_a^{2}(t) - 1)}
{(q^{m_{-}(\gamma_a)}\gamma_a (t) + 1) (q^{m_{+}(\gamma_a)}\gamma_a (t) - 1)} \\
\label{eq:A.33} & = q^{(\dim \hat{\mf m} - \dim \hat{\mf g}) / 2}  
\prod_{a\in (\Phi \setminus \Phi_M) /\theta} \frac{(1-\gamma_a^{-2}(t))}
{(1+q^{-m_{-}(\gamma_a)}\gamma_a^{-1}(t))(1-q^{-m_{+}(\gamma_a)}\gamma_a^{-1}(t))} ,
\end{align}
a rational function of $t \in T^\vee / (1 - \theta) T^\vee$. We note that for $M$
equal to the maximal torus $T$, $m^T (t)$ involves all roots from $\Phi / \theta$.

We denote the adjoint representation of ${}^L M$ on $\mr{Lie}(G^\vee) / \mr{Lie}(M^\vee) 
= \hat{\mf g} / \hat{\mf m}$ by $\mr{Ad}_{G^\vee} |_{\hat{\mf g} / \hat{\mf m}}$.
Let $\phi_M$ be an unramified L-parameter for $M = \mc M (K)$ and write 
\[
r_M \theta = \phi_M \big( \Fr, \matje{q^{-1/2}}{0}{0}{q^{1/2}} \big) . 
\]
Upon replacing $\phi_M$ by an equivalent L-parameter we may assume that 
$r_M \in \hat{T}^{\theta,0}$ \cite[Lemma 3.2]{Re}.
For $z \in Z(M^\vee)^{\theta,\circ} \cong X_\nr (M)$ we define another 
unramified L-parameter $z \phi_M \in \Phi (M)$ by 
\begin{equation}\label{eq:A.30}
(z \phi_M) = \phi_M \text{ on } \mb I_K \times SL_2 (\C), \quad
(z \phi_M)(\Fr) = z (\phi_M (\Fr)) .
\end{equation}
By the additivity of $\gamma$-factors 
\begin{equation}\label{eq:A.35}
\gamma (s, \mr{Ad}_{G^\vee,M^\vee} \circ z \phi_M, \psi) =
\gamma (s,\mr{Ad}_{M^\vee} \circ z \phi_M,\psi) \,
\gamma (s,\mr{Ad}_{G^\vee} |_{\hat{\mf g} / \hat{\mf m}} \circ z \phi_M,\psi) .
\end{equation}

\begin{lem}\label{lem:A.3}
With the above notations:
\[
\gamma (0, \mr{Ad}_{G^\vee} |_{\hat{\mf g} / \hat{\mf m}} \circ z\phi_M, \psi) = \pm m^M (z r_M) 
\]
as rational functions of $z \in Z(M^\vee)^{\theta,\circ}$.
\end{lem}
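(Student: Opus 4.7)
The plan is to reduce the $\gamma$-factor on the left-hand side to an explicit product indexed by $\theta$-orbits in $\Phi\setminus\Phi_M$, and to match that product, pair by pair, with the factors in the orbit decomposition of the right-hand side \eqref{eq:A.33}.

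First, the representation $\rho := \mr{Ad}_{G^\vee}|_{\hat{\mf g}/\hat{\mf m}} \circ z\phi_M$ is self-dual, because $\Phi\setminus\Phi_M$ is stable under $\alpha\mapsto-\alpha$ and the Killing form restricts to a non-degenerate pairing between $\hat{\mf g}_\alpha$ and $\hat{\mf g}_{-\alpha}$. Proposition \ref{prop:A.1} then replaces $\rho$ by the associated unramified Weil representation $\rho_0$ with trivial nilpotent part and $\rho_0(\Fr)=\mr{Ad}_{G^\vee}(zr_M\theta)$, using \eqref{eq:respt} and the fact that the infinitesimal character of $z\phi_M$ equals $zr_M\theta$. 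Decompose $\hat{\mf g}/\hat{\mf m} = \bigoplus_{a\in(\Phi\setminus\Phi_M)/\theta} V_a$ with $V_a = \bigoplus_{\alpha\in a}\hat{\mf g}_\alpha$; each $V_a$ is $\rho_0(\Fr)$-stable, and since $zr_M\in\hat{T}^\theta$ the scalar $\alpha(zr_M)$ is constant along $a$, so $\rho_0(\Fr)|_{V_a}$ is conjugate to a cyclic permutation twisted by that scalar. For a type I orbit this yields the characteristic factor $1 - T^{|a|}\gamma_a(zr_M)$; for a type II orbit the folded structure recalled in $\S$\ref{sec:A2} splits the cyclic block into two subblocks in accordance with the parameters $f_{(a,0)}, f_{(a,1)}$ of \eqref{eq:A.28}.

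Next, I would assemble the $\gamma$-factor and compare opposite-orbit pairs. Since $\rho_0$ is unramified and self-dual, its $\epsilon$-factor at $s=0$ is by \cite[\S 3.4]{Tat} a sign times a power of $q$; with $\psi$ of order $-1$ this power equals $-\dim(\hat{\mf g}/\hat{\mf m})/2$, which matches the prefactor $q^{(\dim\hat{\mf m}-\dim\hat{\mf g})/2}$ of \eqref{eq:A.33}. Pairing each orbit $a$ with its opposite $-a$ and using $\gamma_{-a}(zr_M)=\gamma_a(zr_M)^{-1}$, the $\{a,-a\}$-contribution to $L(1-s,\rho_0^\vee)/L(s,\rho_0)$ at $s=0$ in the type I case reads
\[
\frac{\bigl(1-\gamma_a(zr_M)\bigr)\bigl(1-\gamma_a(zr_M)^{-1}\bigr)}{\bigl(1-q^{-|a|}\gamma_a(zr_M)\bigr)\bigl(1-q^{-|a|}\gamma_a(zr_M)^{-1}\bigr)}.
\]
The matching factor of \eqref{eq:A.33}, after expanding $1-\gamma_a^{\pm 2}=(1-\gamma_a^{\pm 1})(1+\gamma_a^{\pm 1})$ and cancelling the $(1+\gamma_a^{\pm 1})$ terms against the denominators with $m_-(\gamma_a)=0$, simplifies to the same rational function. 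The type II pair is treated analogously: the two subblocks identified in the previous step produce the two denominator factors of \eqref{eq:A.33} with $m_+(\gamma_a)=2|a|/3$ and $m_-(\gamma_a)=|a|/3$.

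The type I case is routine after these reductions; the main obstacle is the type II case, where one must trace carefully the splitting of $V_a$ into two subblocks and its interaction with the two non-trivial parameters $m_\pm(\gamma_a)$, so that the denominator pair $(1 + q^{-m_-(\gamma_a)}\gamma_a^{-1})(1 - q^{-m_+(\gamma_a)}\gamma_a^{-1})$ of \eqref{eq:A.33} is reproduced exactly. Since the lemma is stated up to an overall sign, only the $q$-powers, the zeros and the poles need to match, and they do by the orbit-wise computation above.
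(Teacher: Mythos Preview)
Your proof is correct and follows essentially the same route as the paper. Both arguments reduce to $\rho_0$ via Proposition~\ref{prop:A.1}, decompose $\hat{\mf g}/\hat{\mf m}$ into the orbit spaces $\hat{\mf g}_a$ for $a\in(\Phi\setminus\Phi_M)/\theta$, compute the characteristic polynomial of $\mr{Ad}(zr_M\theta)$ on each $\hat{\mf g}_a$, and match the resulting product with \eqref{eq:A.33}. The only presentational differences are: the paper first switches from $\psi$ to an order-$0$ character $\psi_0$ via \cite[Lemma~1.3]{HII} so that the $\epsilon$-factor becomes identically~$1$, whereas you compute the $\epsilon$-factor directly with $\psi$ (both give the same power of $q$ up to sign); and the paper simply cites Reeder \cite[\S 3.4]{Re} for the characteristic polynomials \eqref{eq:typeI}, \eqref{eq:typeII} and then observes that substituting them into \eqref{eq:A.20} yields $\pm m^M(zr_M)$ directly, whereas you redo the type~I computation by hand and pair each $a$ with $-a$ before comparing---a harmless but unnecessary step, since the product over all of $(\Phi\setminus\Phi_M)/\theta$ is already $a\mapsto -a$ symmetric.

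One small imprecision: your claim that ``$\alpha(zr_M)$ is constant along $a$'' holds only for type~I orbits. For type~II, the class $a$ (as defined in \cite[\S 3.3]{Re}) is not a single $\theta$-orbit of roots but the union of two, e.g.\ $\{\alpha_1,\alpha_2\}$ and $\{\alpha_1+\alpha_2\}$, and $(\alpha_1+\alpha_2)(zr_M)=\alpha_1(zr_M)^2$ is generally different from $\alpha_1(zr_M)$. This does not affect your conclusion, since you treat type~II separately anyway and correctly identify the two subblocks; it just means the sentence should be stated only for type~I.
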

\begin{proof}
Let $\rho, \rho_0$ be the associated self-dual Weil--Deligne representations as in 
\eqref{eq:A.13}, and let $\psi_0 : K \to \C^\times$ be an additive character of order 0. 
By \cite[Lemma 1.3]{HII} and Proposition \ref{prop:A.1}
\begin{equation}\label{eq:selfdual}
\gamma (0,\rho,\psi) = q^{(\dim \hat{\mf m} - \dim \hat{\mf g})/2} \gamma(0,\rho,\psi_0) =
\pm q^{(\dim \hat{\mf m} - \dim \hat{\mf g})/2} \gamma(0,\rho_0,\psi_0) .
\end{equation}
Using the definitions \eqref{eq:A.1} we plug \eqref{eq:A.14} into \eqref{eq:selfdual}, and
we obtain
\begin{equation}\label{eq:A.15}
\gamma (0, \mr{Ad}_{G^\vee} |_{\hat{\mf g} / \hat{\mf m}} \circ \phi, \psi) = 
\pm q^{(\dim \hat{\mf m} - \dim \hat{\mf g})/2} \lim_{s \to 0} L(1-s,\rho_0) L(s,\rho_0)^{-1} .
\end{equation}
For $a\in \Phi/\theta$ let $\hat{\mf g}_a \subset \hat{\mf g}$ be the subspace 
$\sum_{\alpha\in a}\hat{\mf g}_\alpha$, so that we have a 
$\mr{Ad}_{G^\vee}(z r_M \theta)$-stable decomposition
\begin{equation}\label{eq:A.29}
\hat{\mf g} = \hat{\mf m} \oplus \bigoplus\nolimits_{a \in (\Phi \setminus \Phi_M) / \theta} 
\hat{\mf g}_a .
\end{equation}
Using \eqref{eq:A.29} and \eqref{eq:A.15} we see that 
\begin{align}
\nonumber \gamma (0, \mr{Ad}_{G^\vee} |_{\hat{\mf g} / \hat{\mf m}} & \circ z \phi_M, \psi) 
= \pm q^{(\dim \hat{\mf m} - \dim \hat{\mf g})/2} \lim_{s\to 0}\frac{\det \big( 1 - q^{-s}  
\mr{Ad}_{G^\vee}(z r_M \theta)  |_{\hat{\mf g} / \hat{\mf m}} \big)}
{\det \big( 1 - q^{s-1} \mr{Ad}_{G^\vee}(z r_M \theta)  |_{\hat{\mf g} / \hat{\mf m}} \big)}\\
\label{eq:A.20} & = \pm q^{(\dim \hat{\mf m} - \dim \hat{\mf g})/2} \lim_{s\to 0} 
\prod_{a \in (\Phi \setminus \Phi_M) /\theta} 
\frac{\det(1-q^{-s} \mr{Ad}_{G^\vee}(z r_M \theta)|_{\hat{\mf g_a}})}
{\det(1-q^{s-1} \mr{Ad}_{G^\vee}(z r_M \theta)|_{\hat{\mf g_a}})} .
\end{align}
In \cite[Section 3.4]{Re} the characteristic polynomial of $\mr{Ad}_{G^\vee}(r\theta)$ on
$\hat{\mf g}_a$ was determined. (Strictly speaking Reeder only treats the case where $\Phi$
is irreducible, but his calculations generalize readily.) For $a\in\Phi/\theta$ of type I
this gives
\begin{equation}\label{eq:typeI}
\det(1-q^{-s} \mr{Ad}_{G^\vee}(z r_M \theta)|_{\hat{\mf g_a}})=
1-q^{-sm_{+}(\gamma_a)}\gamma_a (z r_M) ,
\end{equation}
while for $a\in\Phi/\theta$ of type II:
\begin{equation}\label{eq:typeII}
\det(1-q^{-s} \mr{Ad}_{G^\vee}(z r_M \theta)|_{\hat{\mf g_a}})=
(1+q^{-sm_{-}(\gamma_a)}\gamma_a (z r_M))(1-q^{-sm_{+}(\gamma_a)}\gamma_a (z r_M)) .
\end{equation}
With these expressions for the characteristic polynomials, \eqref{eq:A.20} becomes 
precisely $\pm m^M (z r_M)$. Finally we note that \eqref{eq:typeI} and \eqref{eq:typeII} are
regular for $z$ in a dense Zariski-open subset of $Z(M^\vee)^{\theta,\circ}$, so that 
\eqref{eq:A.20} defines a rational function on $Z(M^\vee)^{\theta,\circ}$.
\end{proof}

Consider an Iwahori subgroup $I_{ss}\subset G/Z(G)_s$. The Iwahori--Hecke algebra 
$\mathcal{H}_{ss} :=\mathcal{H}(G/Z(G)_s, I_{ss})$ of $G/Z(G)_s$ is a quotient of 
$\mathcal{H}(G, I)$. It has the same root system and the same parameter functions $m_\pm$,
and hence (essentially) the same relative $\mu$-function $m^M = m^{M / Z(G)_s}$.

Let $\overline{I_{ss}}$ be the maximal finite reductive quotient of the Iwahori subgroup
$I_{ss}\subset G/Z(G)_s$. By \cite[Proposition 3.3.5]{Car2}  
\begin{equation}\label{eq:A.11}
|\overline{I_{ss}}| = \det(q-\mr{Ad}(\theta)|_{\hat{\mf t}}).
\end{equation}
When $\mc G / Z(\mc G)_s$ is semisimple, \eqref{eq:A.11} can be expressed more explicitly
by choosing a basis of $\hat{\mf t}$ consisting of simple coroots and evaluating
the determinant:
\[
|\overline{I_{ss}}| = \prod\nolimits_{a\in \Delta/\theta}(q^{|a\cap \Delta|}-1).
\]
Given an additive character $\psi$ of $K$ with conductor $\mathfrak{p}$, 
the Haar measure $\mu_\psi$ on $G/Z(G)_s$ satisfies \cite[(1.1)]{HII} and \eqref{eq:1.3}: 
\begin{equation}\label{eq:A.16}
\vol (I_{ss}) = q^{-\mr{dim}(\hat{\mf t})/2} \det(q-\mr{Ad}(\theta)|_{\hat{\mf t}}).
\end{equation}
The $\mu$-function of the Iwahori--Hecke algebra $\mathcal{H}_{ss}$ is denoted $m_T$ in
\cite[Theorem 3.25]{Opd-Sp}. In our setting, we replace the subscript $T$ (the torus 
associated to an affine Hecke algebra) by the relevant group. 
With the above Haar measure and the normalization convention 
\cite[\S 2.4.1 and Proposition 2.5]{Opd2}, the $\mu$-function for $G / Z(G)_s$ becomes:
\begin{multline}\label{eq:mu}
m_{G / Z(G)_s} (t) = \vol (I_{ss})^{-1} m^T (t) \\
= \frac{q^{\mr{-dim}(\hat{\mf g})/2}}{\det(1-q^{-1}\mr{Ad}(\theta)|_{\hat{\mf t}})}
\prod_{a\in \Phi/\theta}\frac{(1-\gamma_a^{-2}(t))}{(1+q^{-m_{-}(\gamma_a)}\gamma_a^{-1}(t))
(1-q^{-m_{+}(\gamma_a)}\gamma_a^{-1}(t))} .
\end{multline}
Here $t$ lies in $\hat{T}/(1-\theta)\hat{T}$, the torus associated to $\mc H_{ss}$. However, 
as the roots $\gamma_a$ are trivial on $Z(G^\vee)^\theta$, \eqref{eq:A.12} entails that we may
just as well consider $m_{G / Z (G)_s}$ as a $Z(G^\vee)^\theta$-invariant function on 
$T^\vee / (1 - \theta ) T^\vee = \textup{Hom}(X_*(\mathcal{S}),\mathbb{C}^\times)$.

Recall from \cite[Proposition 3.2]{GrRe} that
$\gamma (0,\mr{Ad}_{G^\vee} \circ \phi, \psi)$ is nonzero if and only if $\phi$ is discrete.
Observe that it is a priori clear that \eqref{eq:respt}, and hence the $\gamma$-value 
\eqref{eq:A.1} for the adjoint representation $\tilde\rho$, is invariant under 
$X_\nr (G) \cong Z(G^\vee)^{\theta,\circ}$. Therefore it suffices to consider a discrete 
unramified L-parameter for $G / Z(G)_s$ in the next theorem.

\begin{thm}\label{thm:A.2} 
Fix an additive character $\psi$ of $K$ with conductor $\mathfrak{p}$.
Let $\mathcal{G}$ be an unramified reductive group over $K$. Let $\phi$ be an unramified 
discrete L-parameter for $G/Z(G)_s$, write $\tilde \rho = \mr{Ad}_{G^\vee} \circ \phi$ and 
$\rho_0 (\textup{Frob})= \mr{Ad}_{G^\vee}(r\theta)$ as in \eqref{eq:respt}. 
By \cite[Lemma 3.2]{Re} we may assume that $r\in \hat{T}^{\theta,0}$.
There exists $d \in \Q^\times$ such that, as rational functions in $q$:
\begin{multline*}
\gamma (0, \mr{Ad}_{G^\vee} \circ \phi, \psi) = d \,m_{G/Z(G)_s}^{(\{r\})} = \\
\frac{d \; q^{-\textup{dim}(\hat{\mf g})/2}}
{\det(1-q^{-1}\mr{Ad}(\theta)|_{\hat{\mf t}})}
\frac{\prod'_{a\in \Phi/\theta}(1+\gamma_a^{-1}(r))}{
\prod'_{a\in \Phi/\theta}(1+q^{-m_{-}(\gamma_a)}\gamma^{-1}_a(r))}
\frac{\prod'_{a\in \Phi/\theta}(1-\gamma_a^{-1}(r))}{
\prod'_{a\in \Phi/\theta}(1-q^{-m_{+}(\gamma_a)}\gamma^{-1}_a(r))} ,
\end{multline*} 
where $\prod'_{a\in \Phi/\theta}$ denotes the product in which zero factors are omitted.   

The constant $d$ equals $\pm 1$ if $\mc G$ is semisimple and $K$-split, while in general 
$d$ is of the form $\pm n_1 2^{n_2} 3^{n_3}$ with $\pm n_1, n_2, n_3 \in \Z$.
\end{thm}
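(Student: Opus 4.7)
The plan is to compute both sides as rational functions in $q$ and match them directly. First, I would apply Proposition \ref{prop:A.1} to replace $\mr{Ad}_{G^\vee}\circ\phi$ by the Weil--Deligne representation $\rho_0$ with trivial nilpotent part (introducing only a sign), and then \cite[Lemma 1.3]{HII} to pass from the conductor-$\mf p$ character $\psi$ to the order-$0$ character $\psi_0$, which precisely produces the overall prefactor $q^{-\dim(\hat{\mf g})/2}$ appearing in the definition \eqref{eq:mu} of $m_{G/Z(G)_s}$. Because $\rho_0$ is self-dual under the Killing form and $\epsilon(s,\rho_0,\psi_0)=1$ by \eqref{eq:A.14}, the definitions in \eqref{eq:A.1} collapse to
\[
\gamma(s,\rho_0,\psi_0) \;=\; \frac{\det(1-q^{-s}\mr{Ad}(r\theta)|_{\hat{\mf g}})}{\det(1-q^{s-1}\mr{Ad}(r\theta)|_{\hat{\mf g}})}.
\]

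Next I factor both determinants via the $\mr{Ad}(r\theta)$-stable decomposition $\hat{\mf g} = \hat{\mf t} \oplus \bigoplus_{a\in\Phi/\theta}\hat{\mf g}_a$. The Cartan factor is handled by Lemma \ref{lem:A.4}: its denominator at $s=0$ equals the constant $\det(1-q^{-1}\mr{Ad}(\theta)|_{\hat{\mf t}})$ appearing directly in the target formula, while its numerator vanishes to order $|\Delta/\theta|$ with leading coefficient $n_1 \log(q)^{|\Delta/\theta|}\prod_{a\in\Delta/\theta}|a\cap\Delta|$. For the root spaces I would plug in the type-I and type-II characteristic polynomials \eqref{eq:typeI} and \eqref{eq:typeII} from the proof of Lemma \ref{lem:A.3}, which exhibit exactly the linear factors $(1-q^{-sm_{+}(\gamma_a)}\gamma_a(r))$ and (for type~II) $(1+q^{-sm_{-}(\gamma_a)}\gamma_a(r))$, together with their $q^{s-1}$ analogues in the denominator. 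Pairing each $a\in\Phi/\theta$ with $-a$ using $\gamma_{-a}(r)=\gamma_a^{-1}(r)$ reorganizes the two products so that for each unordered pair $\{a,-a\}$ both $\gamma_a^{\pm 1}$ appear in the same symmetric pattern as in the $\mu$-function.

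The central step is then the limit $s\to 0$. Discreteness of $\phi$ means $r$ is a (zero-dimensional) residual point for $\mc H_{ss}$, which forces exactly $|\Delta/\theta|$ of the denominator root factors to vanish at $s=0$, producing a pole of the same order as the zero of the Cartan numerator so that the overall $\gamma$-factor is finite and nonzero. Each vanishing linear factor contributes a derivative of the form $\pm m_\pm(\gamma_a)\log(q)$, and these logarithmic contributions combine with $\log(q)^{|\Delta/\theta|}$ from Lemma \ref{lem:A.4} to cancel on the nose, leaving a purely rational expression. Under the $\prod'$-convention (omit vanishing factors from both numerator and denominator) the surviving product is $d\cdot m_{G/Z(G)_s}^{(\{r\})}$, where $d$ collects: the sign from Proposition \ref{prop:A.1}, the positive integer $n_1$ from Lemma \ref{lem:A.4}, the rational ratios $|a\cap\Delta|/m_{\pm}(\gamma_a)\in\{1,2,3\}$, and signs produced by the type-II pairings.

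The main technical obstacle is this delicate bookkeeping: verifying that the number of vanishing denominator factors equals $|\Delta/\theta|$ on the nose (a translation of the discreteness of $\phi$ into the language of residual points, ultimately a consequence of \cite[Theorem A.7]{Opd-Sp}) and tracking how the type-II factorization $(1+\gamma_a^{-1})(1-\gamma_a^{-1})$ together with the orbit sizes $|a\cap\Delta|\in\{1,2,3\}$ and the type-II signs combine to force $d$ into the shape $\pm n_1\cdot 2^{n_2}\cdot 3^{n_3}$. In the $K$-split semisimple case $Z(\mc G)^\circ$ is trivial so $n_1=1$, $\theta$ acts trivially on $\Delta$ so $|a\cap\Delta|=1$, and no type-II roots occur; hence $d=\pm 1$, as claimed.
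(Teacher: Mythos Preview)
Your proposal is correct and follows essentially the same route as the paper's proof: split the $\gamma$-factor along $\hat{\mf g} = \hat{\mf t} \oplus \bigoplus_{a}\hat{\mf g}_a$, handle the Cartan piece with Lemma \ref{lem:A.4} and the root pieces with the type-I/II formulas \eqref{eq:typeI}--\eqref{eq:typeII}, and then pass to $s\to 0$ using that $r$ is a residual point. The only imprecision is your claim that exactly $|\Delta/\theta|$ \emph{denominator} factors vanish: what residuality gives is that the \emph{net} pole order of the root-space product equals $|\Delta/\theta|$ (in the paper's notation $|P|-|N|=|\Delta/\theta|$), so numerator factors may also vanish, and the constant $d$ must absorb the ratio $\prod_{N} m_\epsilon(\gamma_a)\big/\prod_{P} m_\epsilon(\gamma_a)$ rather than just a product over $P$.
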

\begin{proof}
By the additivity of $\gamma$-factors
\begin{equation}\label{eq:A.31}
\gamma (s, \mr{Ad}_{G^\vee} \circ \phi, \psi) =
\gamma (s, \mr{Ad}_{G^\vee} |_{\hat{\mf t}} \circ \phi, \psi) 
\gamma (s, \mr{Ad}_{G^\vee} |_{\hat{\mf g} / \hat{\mf t}} \circ \phi, \psi) .
\end{equation}
Define $\phi_T : \mb W_K \to {}^L T$ by $\phi_T (\Fr) = r \theta$.
By \cite[Lemma 1.3]{HII} and Proposition \ref{prop:A.1}, applied to the factor for $\hat{\mf t}$,
\eqref{eq:A.31} equals
\[
\pm q^{-\dim (\hat{\mf t}) / 2} \gamma (s, \mr{Ad}_{G^\vee} |_{\hat{\mf t}} \circ \phi_T, \psi_0) 
\gamma (s, \mr{Ad}_{G^\vee} |_{\hat{\mf g} / \hat{\mf t}} \circ \phi, \psi) .
\]
With Lemma \ref{lem:A.4} and \eqref{eq:A.20}, we find that \eqref{eq:A.31} equals
\begin{equation}\label{eq:A.32}
\pm q^{-\dim (\hat{\mf g}) / 2} \frac{\det(1-q^{-s} \mr{Ad}_{G^\vee}(r\theta)|_{\hat{\mf t}})}
{\det(1-q^{s-1} \mr{Ad}_{G^\vee}(r\theta)|_{\hat{\mf t}})} \prod_{a \in \Phi/\theta}
\frac{\det(1-q^{-s} \mr{Ad}_{G^\vee}(r \theta)|_{\hat{\mf g_a}})}
{\det(1-q^{s-1} \mr{Ad}_{G^\vee}(r \theta)|_{\hat{\mf g_a}})} .
\end{equation}
The behavior for $s \to 0$ was already analysed in Lemmas \ref{lem:A.4} and \ref{lem:A.3}.
In the current situation we can do better, by comparing the poles and the zeros.

Let $r = sc \in \hat{T}^{\theta,\circ}$ be the polar decomposition of $r$, with $s$ a torsion 
element and $c$ in the positive part of a real split subtorus.
Since $\phi$ is unramified and discrete, $H := Z_{\hat{G}}(s\theta)$ is a semisimple group and 
\[
\phi' := \phi|_{SL_2(\C)}: SL_2(\C)\to H
\]
has finite centraliser in $H$ \cite[\S 3.3]{Re}. This means that $s\theta\in \hat{G}\theta$ 
is an isolated torsion element \cite[Section 3.8]{Re}, and the root system of $H$ is a maximal 
proper subdiagram of $\mathcal{D}(\hat{\mf g},\theta)$. Moreover, $\phi'$ corresponds 
to a distinguished unipotent orbit of $H$, and 
\begin{equation}\label{eq:A.34}
c = \phi'\matje{q^{-1/2}}{0}{0}{q^{1/2}}\in \hat{T}^{\theta,\circ}\subset H.
\end{equation}
It follows \cite[Appendix A]{Opd-Sp} that the image of $r\in \hat{T}^{\theta,\circ}$ in 
$\hat{T}/(1-\theta)\hat{T}$ is a residual point for $\mathcal{H}$ (or equivalently for 
$m_{G/Z(G)_s}$).

Now we analyse the product obtained from \eqref{eq:A.32} by applying \eqref{eq:typeI}
and \eqref{eq:typeII}:
\[
\prod_{a \in \Phi/\theta}
\frac{\det(1-q^{-s} \mr{Ad}_{G^\vee}(r\theta)|_{\hat{\mf g_a}})}
{\det(1-q^{s-1} \mr{Ad}_{G^\vee}(r\theta)|_{\hat{\mf g_a}})} =
\prod_{a \in \Phi/\theta} \frac{1-q^{-sm_{+}(\gamma_a)}\gamma_a(r)}
{1-q^{(s-1)m_{+}(\gamma_a)}\gamma_a(r)} 
\frac{1 + q^{-sm_{-}(\gamma_a)}\gamma_a(r)}{1 + q^{(s-1)m_{-}(\gamma_a)}\gamma_a(r)} .
\]
The residuality of $r$ means that the pole order of this expression
at $s = 0$ is precisely $\dim (\hat{\mf t}^\theta) = |\Delta / \theta|$.
Notice that the terms with $m_- (\gamma_a) = 0$ in the numerator cancel out against
the same kind of terms in the denominator.
 
Consider a linear factor $1 \pm q^m \gamma_a (r)$, of the numerator or the denominator,
which has a zero at $s=0$. Its leading order term near $s=0$ is linear, namely
$s \log (q) m_{\pm}(\gamma_a)$. 

Let $N$ be the subset of $(a,\epsilon) \in \Phi / \theta \times \{\pm 1\}$ for which 
the corresponding term in the numerator has a pole at $s=0$, but with 
$m_\epsilon (\gamma_a) \neq 0$. Similarly we define $P$ for the denominator. Then
\begin{equation}\label{eq:A.21}
\frac{ \prod_{(a,\epsilon) \in N} 1 - \epsilon q^{-s m_\epsilon (\gamma_a)} \gamma_a (r)}
{\prod_{(a,\epsilon) \in P} 1 - \epsilon q^{(s-1) m_\epsilon (\gamma_a)} \gamma_a (r)} =
\frac{s^{-|\Delta / \theta|}}{\log (q)^{|\Delta / \theta|}}  
\frac{ \prod_{(a,\epsilon) \in N} m_\epsilon (\gamma_a)}
{\prod_{(a,\epsilon) \in P}m_\epsilon (\gamma_a)} + \mc O (s^{1-|\Delta / \theta|}) .
\end{equation}
It follows that
\begin{equation}\label{eq:A.22}
\begin{aligned}
& \prod_{a \in \Phi/\theta}
\frac{\det(1-q^{-s} \mr{Ad}_{G^\vee}(r\theta)|_{\hat{\mf g_a}})}
{\det(1-q^{s-1} \mr{Ad}_{G^\vee}(r\theta)|_{\hat{\mf g_a}})} = \\
& \frac{\prod'_{a\in \Phi/\theta}(1+\gamma_a^{-1}(r))}{
\prod'_{a\in \Phi/\theta}(1+q^{-m_{-}(\gamma_a)}\gamma^{-1}_a(r))}
\frac{\prod'_{a\in \Phi/\theta}(1-\gamma_a^{-1}(r))}{
\prod'_{a\in \Phi/\theta}(1-q^{-m_{+}(\gamma_a)}\gamma^{-1}_a(r))} \times \eqref{eq:A.21} . 
\end{aligned}
\end{equation}
From \eqref{eq:A.32}, Lemma \ref{lem:A.4} and \eqref{eq:A.22}
we conclude
\begin{multline}\label{eq:A.24}
\gamma (0, \mr{Ad}_{G^\vee} \circ \phi, \psi) = \frac{\pm n_1 q^{-\dim (\hat{\mf g}) / 2}
\prod_{a \in \Delta /\theta} |a \cap \Delta | }{\det(1-q^{-1} 
\mr{Ad}_{G^\vee}(r\theta)|_{\hat{\mf t}}) } \frac{ \prod_{(a,\epsilon) \in N} 
m_\epsilon (\gamma_a)} {\prod_{(a,\epsilon) \in P} m_\epsilon (\gamma_a)} \times \\
\frac{\prod'_{a\in \Phi/\theta}(1+\gamma_a^{-1}(r))}{
\prod'_{a\in \Phi/\theta}(1+q^{-m_{-}(\gamma_a)}\gamma^{-1}_a(r))}
\frac{\prod'_{a\in \Phi/\theta}(1-\gamma_a^{-1}(r))}{
\prod'_{a\in \Phi/\theta}(1-q^{-m_{+}(\gamma_a)}\gamma^{-1}_a(r))} .
\end{multline}
It remains to analyse the expression
\begin{equation}\label{eq:A.25}
\frac{\prod_{a \in \Delta /\theta} |a \cap \Delta | \,
\prod_{(a,\epsilon) \in N} m_\epsilon (\gamma_a)}
{\prod_{(a,\epsilon) \in P} m_\epsilon (\gamma_a)} .
\end{equation}
Since we omitted the terms with $m_\pm (\gamma_a) = 0$, \eqref{eq:A.25} is a nonzero
rational number. It factors as a product, over the irreducible components $R_i$ of 
$\Phi / \theta$, of the terms with $a \in R_i$. The restriction of $r$ to any of the $R_i$ 
is still a residual point, so there as many terms with $a \in R_i$ in numerator
as in the denominator. Let $|\theta_i|$ be the number of irreducible components
of $\Phi$ that go into $R_i$, and pick one such component $\Phi_i$. Then 
$|a| = |\theta_i| \, |a \cap \Phi_i|$ for $a \in R_i$. The factor $|\theta_i|$ appears
equally often in the numerator and in the denominator of \eqref{eq:A.25}, so it
cancels. Writing $m_{\pm,i} (\gamma_a) := m_\pm (\gamma_a) |\theta_i |^{-1}$, we 
find that \eqref{eq:A.25} equals
\begin{equation}\label{eq:A.26}
\prod_i \frac{\prod_{a \in R_i \cap (\Delta /\theta)} |a \cap \Delta \cap \Phi_i|  \,
\prod_{(a,\epsilon) \in N : a \in R_i} m_{\epsilon,i} (\gamma_a)}
{\prod_{(a,\epsilon) \in P : a \in R_i} m_{\epsilon,i} (\gamma_a)} .
\end{equation}
The formulas \eqref{eq:A.27} and 
\eqref{eq:A.28} entail that each of the factors in \eqref{eq:A.26} is the length of 
an orbit of an automorphism of a connected Dynkin diagram. That is: they are 1, 2 or 3,
where 3 can only occur for an exceptional automorphism of $D_4$. Hence \eqref{eq:A.26} is 
of the form $2^{n_2} 3^{n_3}$ with $n_2, n_3 \in \Z$. We insert this into \eqref{eq:A.24}, 
and we obtain the claimed formula for the adjoint $\gamma$-factor.

When $\mc G$ is an almost direct product of restrictions of scalars of split groups, all 
the factors in \eqref{eq:A.26} are one. In the special case where $\mc G$ is $K$-split,
also $n_1 = 1$ so that \eqref{eq:A.24} becomes the desired expression with $d = \pm 1$. 
\end{proof}

We conclude this appendix by showing that adjoint $\gamma$-factors of bounded unramified
L-parameters have real values. Notice that every such L-parameter arises from a discrete
unramified L-parameter for a Levi subgroup $M \subset G$, via an inclusion ${}^L M \to {}^L G$.

\begin{lem}\label{lem:A.5}
\enuma{
\item In the notations from Theorem \ref{thm:A.2}, 
$\gamma (0,\mr{Ad}_{G^\vee} \circ \phi, \psi) \in \R^\times$.
\item Suppose that $\phi_M \in \Phi (M)$ is a discrete bounded unramified L-parameter 
and that $z \in X_\unr (M)$. Then 
$\gamma (0, \mr{Ad}_{G^\vee} |_{\hat{\mf g} / \hat{\mf m}} \circ z\phi_M, \psi) \in \R$.
} 
\end{lem}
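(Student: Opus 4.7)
The plan is to deduce both statements from the HII-type formulas established earlier in the paper, using positivity of formal degrees and Plancherel densities rather than manipulating the adjoint $\gamma$-factor directly; this sidesteps a laborious analysis of residual points.

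For part (a), I would fix any enhancement $\rho$ such that $(\phi,\rho)$ is $G$-relevant, and let $\delta = \pi(\phi,\rho) \in \Irr_\unip(G)$ be the associated representation. Since $\phi$ is discrete, $\delta$ is square-integrable modulo centre by Theorem \ref{thm:1.1}(e). Theorem \ref{thm:4.3} then yields
\[
\fdeg(\delta) = \pm \dim(\rho)\,|S_\phi^\sharp|^{-1}\,\gamma(0,\mr{Ad}_{G^\vee}\circ\phi,\psi).
\]
The left-hand side is a positive real number and $\dim(\rho)$, $|S_\phi^\sharp|$ are positive integers, so $\gamma(0,\mr{Ad}_{G^\vee}\circ\phi,\psi) \in \R^\times$. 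Since the adjoint $\gamma$-factor is independent of the choice of $\rho$, this is intrinsic to $\phi$.

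For part (b), Theorem \ref{thm:6.5} asserts that the Plancherel density on $I_P^G(\mc O)$ equals
\[
\pm \dim(\rho_M)\,|S^\sharp_{\phi_M}|^{-1}\,\gamma(0,\mr{Ad}_{G^\vee,M^\vee}\circ t\phi_M,\psi)\,\textup{d}\mc O(t\otimes\pi_M).
\]
Since Plancherel densities and $\textup{d}\mc O$ are nonnegative real while $\dim(\rho_M),|S^\sharp_{\phi_M}|$ are positive integers, $\gamma(0,\mr{Ad}_{G^\vee,M^\vee}\circ t\phi_M,\psi) \in \R$. Invoking the additivity \eqref{eq:A.35}, we have
\[
\gamma(0,\mr{Ad}_{G^\vee}|_{\hat{\mf g}/\hat{\mf m}}\circ t\phi_M,\psi) = \frac{\gamma(0,\mr{Ad}_{G^\vee,M^\vee}\circ t\phi_M,\psi)}{\gamma(0,\mr{Ad}_{M^\vee}\circ t\phi_M,\psi)}.
\]
The twisted parameter $t\phi_M$ remains an unramified discrete L-parameter for $M$: for $t \in X_\unr(M) \subset Z(\hat M)^{\theta,0}$, centrality of $t$ in $\hat M$ gives $Z_{\hat M}(t\phi_M) = Z_{\hat M}(\phi_M)$, and unramifiedness is preserved because $t$ only affects $\Fr$. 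Hence part (a) applied to $M$ places the denominator in $\R^\times$, and the quotient lies in $\R$.

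The main obstacle is ensuring no circularity: the proofs of Theorems \ref{thm:4.3} and \ref{thm:6.5} must not invoke the present lemma. Inspection shows they rely only on Theorems \ref{thm:3.10}, \ref{thm:2.1}, Lemmas \ref{lem:6.4}, \ref{lem:A.3} and earlier material, so the bootstrap is valid. A purely combinatorial alternative would use the formula of Theorem \ref{thm:A.2}, pairing roots $a \leftrightarrow -a$ to reduce reality to the claim that $\gamma_a(r) + \gamma_a(r)^{-1} \in \R$ for each $a \in \Phi/\theta$; but that approach demands detailed control over the residual points of the unipotent Hecke algebras, which makes the indirect route above considerably more efficient.
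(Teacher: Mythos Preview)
Your argument is correct and logically non-circular: neither Theorem~\ref{thm:4.3} nor Theorem~\ref{thm:6.5} invokes the present lemma, so bootstrapping reality of the adjoint $\gamma$-factor from positivity of formal degrees and Plancherel densities is legitimate. One small point worth making explicit in part (b) is finiteness: your use of Theorem~\ref{thm:6.5} implicitly gives that $\gamma(0,\mr{Ad}_{G^\vee,M^\vee}\circ z\phi_M,\psi)$ is finite at every $z\in X_\unr(M)$ (the Plancherel density is a well-defined finite function on the tempered dual, pointwise), and then the quotient by the nonzero $M$-factor is finite as well. This is the content the paper has to work for separately.

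The paper's own proof is entirely different and self-contained within the appendix. For (a) it exploits a concrete symmetry: writing $r=sc$ in polar form, the element $w_c=\phi'\big(\begin{smallmatrix}0&1\\-1&0\end{smallmatrix}\big)\in W(G^\vee,T^\vee)^\theta$ conjugates $r\theta$ to $\overline{r}^{-1}\theta$, so the explicit product formula of Theorem~\ref{thm:A.2} is simultaneously equal to its own complex conjugate. For (b) the same trick applied to $m^M(zr_M)$ via Lemma~\ref{lem:A.3} gives reality, but then poles must be excluded by an appeal to the theory of residual cosets from \cite{Opd-Sp}. Your route is shorter and avoids both the Weyl-element computation and the residual-coset argument, at the cost of importing the main theorems of the paper; the paper's route keeps the appendix logically independent of Sections~5--6 and yields the reality statement as a byproduct of the explicit formulas, which is closer to the spirit of the appendix.
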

\begin{proof}
(a) For $t \in T^\vee$ we define $\overline t \in T^\vee$ by $x (\overline t) = \overline{x(t)}$ 
for all $x \in X^* (T^\vee)$. From \eqref{eq:A.34} we see that $\overline{r}^{-1} =
\overline{sc}^{-1} = s c^{-1}$ is conjugate to $sc$ by the element
$w_c := \phi' \matje{0}{1}{-1}{0}$. We note that $w_c$ commutes with $s$ and with $\theta$,
and that it normalizes $T^\vee$. Hence it defines an element of $W(G^\vee,T^\vee)^\theta$.

Since $\gamma (0,\mr{Ad}_{G^\vee} \circ \phi, \psi) = 
\gamma (0,\mr{Ad}_{G^\vee} \circ w_c \phi w_c^{-1}, \psi)$, the expression \eqref{eq:A.32}
does not change if we replace $r \theta$ by $w_c r \theta w_c^{-1} = \overline{r}^{-1} \theta$.
As the product in \eqref{eq:A.32} runs over all roots (both positive and negative),
we may further replace $\overline{r}^{-1} \theta$ by $\overline{r} \theta$ without changing
the value. Continuing the calculation from the proof of Theorem \ref{thm:A.2} with 
$\overline{r} \theta$ we end up with $\gamma (0,\mr{Ad}_{G^\vee} \circ \phi, \psi) =
d \, m_{G / Z(G)_s}^{(\{\overline r\})}$, which is exactly the complex conjugate of
$d \, m_{G / Z(G)_s}^{(\{r\})} = \gamma (0,\mr{Ad}_{G^\vee} \circ \phi, \psi)$.\\
(b) As observed before, we may assume that $r_M \in \hat{T}^{\theta,\circ}$.
Replacing $\phi_M$ by $t \phi_M$ (and $z$ by $z t^{-1}$) for a suitable 
$t \in X_\unr (M)$, we can further achieve that 
$r_M \in \hat{T}^{\theta,\circ} \cap M^\vee_\der$. In the proof of part
(a) we showed that $r_M$ is conjugate to $\overline{r_M}^{-1}$ by an element 
$w_c \in N_{M^\vee} (T^\vee)^\theta$. As $z = \overline{z}^{-1} \in Z(M^\vee)^{\theta,\circ}$ 
is fixed by $w_c$, we have $w_c z r_M \theta w_c^{-1} = \overline{z r_M}^{-1} \theta$.

Since $W(M^\vee,T^\vee)^\theta$ acts on $\hat{\mf g} / \hat{\mf m}$, it is clear that \eqref{eq:A.20}
does not change if we conjugate $z \phi_M$ and $z r \theta$ by $w_c$. Further, from
\eqref{eq:typeI} and \eqref{eq:typeII} we see that \eqref{eq:A.20} is invariant under
replacing $z r_M$ by $(z r_M)^{-1}$. From \eqref{eq:A.20} with $\overline{z r_M}$ instead
of $z r_M$ we obtain 
\[
\gamma (0, \mr{Ad}_{G^\vee} |_{\hat{\mf g} / \hat{\mf m}} \circ z\phi_M, \psi) = 
\pm m^M (\overline{z r_M}) .
\]
From \eqref{eq:A.33} we see that this the complex conjugate of $\pm m^M (z r_M)$.
In combination with Lemma \ref{lem:A.4} means that it is a real number, or $\infty$
if $z r_M$ happens to be a pole. 

However, the latter can not happen, and that can be seen with the residual cosets from
\cite[Appendix A]{Opd-Sp}. Namely, if $z r_M$ were a pole of $m^M$, the tempered residual 
coset $X_\unr (M) r_M$ for $m_{G / Z(G)_s}$ would contain a tempered residual coset 
(with the point $z r_M$) of smaller dimension. But that is excluded by 
\cite[Theorem A.17]{Opd-Sp}.
\end{proof}

\end{document}